\numberwithin{equation}{section}
\begin{document}

\renewcommand{\theequation}{\thesection.\arabic{equation}}
\setcounter{secnumdepth}{2}
\newtheorem{theorem}{Theorem}[section]
\newtheorem{definition}[theorem]{Definition}
\newtheorem{lemma}[theorem]{Lemma}
\newtheorem{corollary}[theorem]{Corollary}
\newtheorem{proposition}[theorem]{Proposition}
\numberwithin{equation}{section}
\theoremstyle{definition}
\newtheorem{example}[theorem]{Example}
\title[Toric generalized K$\ddot{a}$hler structures. II]
{Toric generalized K$\ddot{a}$hler structures. II}

\author[Yicao Wang]{Yicao Wang}
\address
{Department of Mathematics, Hohai University, Nanjing 210098, China}
\maketitle

\baselineskip= 20pt
\begin{abstract}
Anti-diagonal toric generalized K$\ddot{a}$hler structures of symplectic type on a compact toric symplectic manifold were investigated in \cite{Wang2} . In this article, we consider \emph{general} toric generalized K$\ddot{a}$hler  structures of symplectic type, without requiring them to be anti-diagonal. Such a structure is characterized by a triple $(\tau, C, F)$ where $\tau$ is a strictly convex function defined in the interior of the moment polytope $\Delta$ and $C, F$ are two constant anti-symmetric matrices. We prove that underlying each such a structure is a \emph{canonical} toric K$\ddot{a}$hler structure $I_0$ whose symplectic potential is given by this $\tau$, and when $C=0$ the generalized complex structure $\mathbb{J}_1$ other than the symplectic one arises from an $I_0$-holomorphic Poisson structure $\beta$ in a \emph{novel} way not mentioned in the literature before. Conversely, given a toric K$\ddot{a}$hler structure with symplectic potential $\tau$ and two anti-symmetric constant matrices $C, F$, the triple $(\tau, C, F)$ then determines a toric generalized K$\ddot{a}$hler structure of symplectic type canonically if $F$ satisfies additionally a certain positive-definiteness condition. In particular, if the initial toric K$\ddot{a}$hler manifold is the standard $M_\Delta$ associated to a Delzant polytope $\Delta$, the resulting generalized K$\ddot{a}$hler structure can be interpreted as obtained via generalized K$\ddot{a}$hler reduction from a generalized K$\ddot{a}$hler structure on an open subset of a complex linear space, just as in Delzant's construction $M_\Delta$ is obtained through K$\ddot{a}$hler reduction from a complex linear space.
\end{abstract}
\section{Introduction}
Generalized K$\ddot{a}$hler (GK) structures in generalized complex (GC) geometry are a generalization of K$\ddot{a}$hler structures in complex geometry. M. Gualtieri proved in \cite{Gu00} a remarkable result that such a structure is equivalent to the biHermitian structure first recognized by physicists trying to find the most general 2-dimensional $N=(2,2)$ supersymmetric $\sigma$-models \cite{Ga}.

Compared with K$\ddot{a}$hler geometry, GK geometry is still not a well-developed discipline and even constructing a nontrivial GK structure needs some effort. Perhaps it helps to study some kinds of simple examples first. In K$\ddot{a}$hler geometry, toric K$\ddot{a}$hler structures are well-understood mainly through the work of V. Guillemin \cite{Gul} and M. Abreu \cite{Ab}. A toric K$\ddot{a}$hler structure can be efficiently described by a strictly convex function $\tau$ defined in the interior $\mathring{\Delta}$ of the moment polytope $\Delta$. In the literature, this $\tau$ is referred to as the \emph{symplectic potential} of the toric K$\ddot{a}$hler structure. Often toric K$\ddot{a}$hler structures provide computable examples to shed some light on abstract ideas in K$\ddot{a}$hler geometry. The basic goal of \cite{Bou} \cite{Wang2} and this article as well is to extend the Abreu-Guillemin theory to the context of GK geometry. We hope this study would provide many interesting yet simple examples for GK geometry.

 In \cite{Bou} L. Boulanger started to study toric GK structures of symplectic type on a compact toric symplectic manifold $(M, \Omega, \mathbb{T}, \mu)$ ($\mathbb{T}$ is a torus acting on $M$ in an effective and Hamiltonian fashion, $\mu$ the moment map and the symplectic form $\Omega$ provides one of the two underlying GC structures); in particular, he identified a special class of such structures called \emph{anti-diagonal} ones and found that each such a structure can be characterized by a pair $(\tau, C)$, where $\tau$ is \emph{again} a strictly convex function on $\mathring{\Delta}$ and $C$ is an anti-symmetric constant matrix.

  Anti-diagonal toric GK structures of symplectic type were further explored in \cite{Wang2}. It was found that the above $\tau$ is always the symplectic potential of a canonically associated toric K$\ddot{a}$hler structure and $C$ provides a holomorphic Poisson structure $\beta$ such that the other GC structure besides the symplectic one is induced from this $\beta$ up to B-transform. In this article, we continue to study toric GK structures of symplectic type that are not necessarily anti-diagonal. Note that a key ingredient in the approach of \cite{Wang2} towards anti-diagonal toric GK structures of symplectic type is to realize that the $\mathbb{T}$-action is strong Hamiltonian in the sense of \cite{Wang} and thus can be generalized complexified. However, for the most general case, the torus action fails to be strong Hamiltonian and the geometry becomes much more complicated.

It turns out in this article that a general toric GK structure of symplectic type can be characterized by a triple $(\tau, C, F)$, where $\tau$ is the symplectic potential of a \emph{canonically} associated toric K$\ddot{a}$hler structure and $C, F$ are two constant anti-symmetric matrices. If $F=0$, we specialize to the anti-diagonal case, and if $C=F=0$, this is the classical toric K$\ddot{a}$hler case. The role of this new matrix $F$ needs to be clarified. Note that $\mu: \mathring{M}\rightarrow \mathring{\Delta}$ is a trivial principal $\mathbb{T}$-bundle over $\mathring{\Delta}$ where $\mathring{M}=\mu^{-1}(\mathring{\Delta})$. While for the anti-diagonal case only \emph{one} flat connection on $\mathring{M}$ is involved, in the general case \emph{three} flat connections arise naturally and they are related to each other by $F$. If we interpret $F$ as a deformation of the underlying canonically associated anti-diagonal toric GK structure of symplectic type, it can be imagined that before deformation, the three connections coincide and as the deformation starts, they become separated: one of them stays unchanged and the other two change in opposite directions.

To understand the different roles of $C$ and $F$ properly, let us resort to a simplified picture. Imagine how one defines a linear complex structure $I$ in a real vector space $V$. He can choose a basis $\{f_i\}$ of $V$ and a certain matrix $A$ claimed to be the matrix form of $I$ w.r.t. $\{f_i\}$. Now if he wants to deform $I$ to obtain new ones, then there are basically two ways to achieve this: on one side he can fix the basis and deform the matrix $A$, while on the other side, he can also fix the matrix but deform the basis. If we interpret $C$, $F$ as small deformations of the complex structure $I_0$ of the canonically associated toric K$\ddot{a}$hler structure , then $C$ corresponds to the first way and $F$ to the second. This explanation will be much clearer in the main body of this article.

The above investigation suggests the possibility of constructing toric GK structures from toric K$\ddot{a}$hler structures by inputting additionally two constant anti-symmetric matrices $C$ and $F$. In this aspect, $C$ and $F$ again behave very different. To realize this construction, there is no requirement on the magnitude of $C$ (this is the same as in the anti-diagonal case which was proved in \cite{Wang2}) and all feasible $C$'s form a real linear space, but $F$ must be chosen to satisfy a certain positive-definiteness condition so that all possible $F$'s only constitute a bounded set.

The rest of this article is organized as follows. \S~\ref{back} is a modest review of the necessary background material from GC geometry. \S~\ref{AG} is a brief account of Abreu-Guillemin theory and its generalization in \cite{Bou} \cite{Wang2}. Our study on general toric GK structures of symplectic type in this article really starts from \S~\ref{Log}. Basing on some essential remarks on a theorem in \cite{Wang2}, we identify a bunch of (almost) complex structures naturally arising on $\mathring{M}$. In particular, we prove that points in $\mathring{M}$ are all regular for $\mathbb{J}_1$ (the GC structure other than the symplectic one). \S~\ref{Com} is devoted to proving that the several (almost) complex structures can all be extended smoothly to the whole of $M$, in particular establishing the conclusion that underlying a toric GK structure of symplectic type there is a canonical toric K$\ddot{a}$hler structure $I_0$ (Thm.~\ref{M1} and Cor.~\ref{cano}). Since the most general case seems a bit complicated, in \S~\ref{symm} we consider the special case of $C=0$ and $F\neq 0$ (called symmetric for the obvious reason). This is the case essentially missed in \cite{Bou}. In this situation, an astonishing result is that the Hitchin Poisson structure underlying the GK structure is also the imaginary part of an $I_0$-holomorphic Poisson structure. This leads to a theorem stating that the GC structure $\mathbb{J}_1$ in the symmetric case actually arises in a novel way from $\mathbb{J}_\beta$ induced from an $I_0$-holomorphic Poisson structure $\beta$ (Thm.~\ref{Symm}). \S~\ref{CON} is devoted to proving that, given a toric K$\ddot{a}$hler structure and two anti-symmetric matrix $C, F$ such that $F$ satisfies a certain positive-definiteness condition, there is a canonical toric GK structure of symplectic type constructed from these data (Thm.~\ref{cons}). As a byproduct of the proof, we show in a global fashion how a general toric GK structure of symplectic type can be obtained from an anti-diagonal one. \S~\ref{Ex} contains an explicit example on the toric K$\ddot{a}$hler manifold $\mathbb{C}P^1\times \mathbb{C}P^1$ to demonstrate the construction mentioned above. For a given Delzant polytope $\Delta$, there is a canonical toric K$\ddot{a}$hler manifold $M_\Delta$ constructed by T. Delzant \cite{Del}. If one applies Thm.~\ref{cons} to this manifold, then the resulting toric GK structure can be interpreted as obtained from GK reduction of a toric GK structure on an open subset of $\mathbb{C}^d$, where $d$ is the number of faces of $\Delta$ of codimension 1, just as in the Delzant construction the toric K$\ddot{a}$hler manifold $M_\Delta$ is obtained from K$\ddot{a}$hler reduction of $\mathbb{C}^d$ by a certain Hamiltonian torus action. The last section is an appendix containing some facts on matrices which are frequently (maybe implicitly) used in the main body of this article.
\section{GK structures of symplectic type}\label{back}
In this section, we collect the most relevant material from GC geometry. Our basic references are \cite{Gu00} \cite{Gu0}.

A Courant algebroid $E$ is a real vector bundle $E$ over a smooth manifold $M$, together with an anchor map $\pi$ to $TM$, a non-degenerate inner product $(\cdot, \cdot)$ and a so-called Courant bracket $[\cdot , \cdot]_c$ on $\Gamma(E)$. These structures should satisfy some compatibility axioms we won't review here. $E$ is called exact, if the short sequence \[0\longrightarrow T^*M\stackrel{\pi^*}\longrightarrow E \stackrel{\pi}\longrightarrow TM \longrightarrow0\]
is exact. We only deal with exact Courant algebroids throughout this article. Given $E$, one can always find an isotropic right splitting $s:TM\rightarrow E$, with a curvature form $H\in \Omega_{cl}^3(M)$ defined by
\[H(X,Y,Z)=([s(X),s(Y)]_c,s(Z)),\quad X, Y, Z\in \Gamma(TM).\]
  By the bundle isomorphism $s+\pi^*:TM\oplus T^*M\rightarrow E$, the Courant algebroid structure can be transported onto $TM\oplus T^*M$. Then the inner product $(\cdot,\cdot)$ is the natural pairing, i.e.
$( X+\xi,Y+\eta)=\xi(Y)+\eta(X)$, and the Courant bracket is
\begin{equation*}[X+\xi, Y+\eta]_H=[X,Y]+\mathcal{L}_X\eta-\iota_Yd\xi+\iota_Y\iota_XH.\end{equation*}
Different splittings are related by B-tranforms: $e^B(X+\xi)=X+\xi+B(X)$, where $B$ is a 2-form on $M$.
\begin{definition}
 A GC structure on a Courant algebroid $E$ is a complex structure $\mathbb{J}$ on $E$ orthogonal w.r.t. the inner product and its $\sqrt{-1}$-eigenbundle $L\subset E_\mathbb{C}$ is involutive under the Courant bracket. We also say $\mathbb{J}$ is integrable in this case.
\end{definition}
For $H\equiv0$, ordinary complex and symplectic structures are extreme examples of GC structures. Precisely, for a complex structure $I$ and a symplectic structure $\Omega$, the corresponding GC structures are of the following form:
\[\mathbb{J}_I=\left(
                 \begin{array}{cc}
                   -I & 0 \\
                   0 & I^* \\
                 \end{array}
               \right),\quad \mathbb{J}_\Omega=\left(
                                                 \begin{array}{cc}
                                                   0 & \Omega^{-1} \\
                                                   -\Omega & 0 \\
                                                 \end{array}
                                               \right).
\]
 A nontrivial example beyond these is provided by a holomorphic Poisson structure $\beta$: Let $\beta$ be a holomorphic Poisson structure relative to a complex structure $J$ on $M$. Then
\[\mathbb{J}_\beta=\left(
      \begin{array}{cc}
        -J & -4\textup{Im}\beta \\
        0 & J^* \\
      \end{array}
    \right),
\]
is a GC structure, where $\textup{Im}\beta$ is the imaginary part of $\beta$.
\begin{definition}A generalized metric on a Courant algebroid $E$ is an orthogonal, self-adjoint operator $\mathcal{G}$ such that $( \mathcal{G}\cdot,\cdot)$ is positive-definite on $E$.
 \end{definition}
 A generalized metric induces a \emph{canonical} isotropic splitting: $E=\mathcal{G}(T^*M)\oplus T^*M$. It is called \emph{the metric splitting}. Given a generalized metric, we shall always choose its metric splitting to identify $E$ with $TM\oplus T^*M$. Then $\mathcal{G}$ is of the form $\left(\begin{array}{cc} 0 & g^{-1} \\g & 0 \\
\end{array} \right)$ where $g$ is an ordinary Riemannian metric.

  A generalized metric is an ingredient of a GK structure.
\begin{definition}
A GK structure on $E$ is a pair of commuting GC structures $(\mathbb{J}_1,\mathbb{J}_2)$ such that $\mathcal{G}=-\mathbb{J}_1 \mathbb{J}_2$ is a generalized metric.
\end{definition}
 A GK structure can be reformulated in many different ways, the basic of which is the biHermitian one: There are two complex structures $J_\pm$ on $M$ compatible with the metric $g$ induced from the generalized metric. Let $\omega_\pm=gJ_\pm$. Then \emph{in the metric splitting} the GC structures and the corresponding biHermitian data are related by the Gualtieri map:
 \[\mathbb{J}_1=\frac{1}{2}\left(
  \begin{array}{cc}
    -J_+-J_-& \omega_+^{-1}-\omega_-^{-1} \\
    -\omega_++\omega_- & J_+^*+J_-^* \\
  \end{array}
\right),\quad \mathbb{J}_2=\frac{1}{2}\left(
  \begin{array}{cc}
    -J_++J_-& \omega_+^{-1}+\omega_-^{-1} \\
    -\omega_+-\omega_- & J_+^*-J_-^* \\
  \end{array}
\right).\]
Note that $\beta_1:=-\frac{1}{2}(J_+-J_-)g^{-1}$ and $\beta_2:=-\frac{1}{2}(J_++J_-)g^{-1}$ are actually real Poisson structures associated to $\mathbb{J}_{1}$ and $\mathbb{J}_{2}$ respectively. As was noted by N. Hitchin in \cite{Hi}, there is a \emph{third} Poisson structure $\beta_3=\frac{1}{4}[J_+,J_-]g^{-1}$. $\beta_3$ is the common imaginary part of a $J_+$-holomorphic Poisson structure $\beta_+$ and a $J_-$-holomorphic Poisson structure $\beta_-$.

If $\mathbb{J}_2$ is a B-transform of a GC structure $\mathbb{J}_\Omega$ induced from a symplectic form $\Omega$, the GK manifold $(M, \mathbb{J}_1, \mathbb{J}_2)$ is said to be \emph{of symplectic type}. It is known from \cite{En} that for a given symplectic manifold $(M, \Omega)$, compatible GC structures $\mathbb{J}_1$ which, together with a B-transform of $\mathbb{J}_\Omega$, form GK structures on $M$ are in \emph{one-to-one} correspondence with \emph{tamed} integrable complex structures $J_+$ on $M$ whose \emph{symplectic adjoint} $J^{\Omega}:=-\Omega^{-1}J_+^*\Omega$ is also integrable. This fact greatly facilitates the study of such structures. Precisely, if we set
\[\frac{1}{2}\left(
  \begin{array}{cc}
    -J_++J_-& \omega_+^{-1}+\omega_-^{-1} \\
    -\omega_+-\omega_- & J_+^*-J_-^* \\
  \end{array}
\right)=\left(\begin{array}{cc}
1 & 0\\
-b & 1\\\end{array}\right)\left(\begin{array}{cc}
0 & \Omega^{-1}\\
-\Omega & 0\\\end{array}\right)\left(\begin{array}{cc}
1 & 0\\
b & 1\\\end{array}\right),\]
then the following basic identities can be easily obtained:
\begin{equation}J_-=J_+^\Omega=-\Omega^{-1} J_+^*\Omega,\quad g=-\frac{1}{2}\Omega (J_++J_-),\quad b=-\frac{1}{2}\Omega (J_+-J_-).\label{sy}\end{equation}
Recall that $J_+$ is tamed with $\Omega$ in the sense that the symmetric part of $-\Omega J_+$ is a Riemannian metric on $M$. Using the fact that
\[(J_++J_-)(J_+-J_-)=-(J_+-J_-)(J_++J_-)=-[J_+, J_-],\]
one can easily derive in this setting that $\beta_3=-1/2(J_+-J_-)\Omega^{-1}$.
\section{Local theory}
\subsection{Abreu-Guillemin theory and Boulanger's generalization}\label{AG}
Let us recall briefly the Abreu-Guillemin theory and its generalization in \cite{Bou} \cite{Wang2} first.
\begin{definition}A toric symplectic manifold $(M, \Omega, \mathbb{T}, \mu)$ of dimension $2n$ is a symplectic manifold $(M, \Omega)$ with an effective and Hamiltonian action of the $n$-dimensional torus $\mathbb{T}=\mathbb{T}^n$. Note that here $\mu$ is the moment map.
\end{definition}
Let $(M,\Omega, \mathbb{T}, \mu)$ be a \emph{compact} toric symplectic manifold and $\mathfrak{t}\cong \mathbb{R}^n$ the Lie algebra of $\mathbb{T}$. By the famous convexity theorem of Atiyah-Guillemin-Sternberg \cite{At} \cite{GS}, the image $\Delta$ of $\mu$ is a polytope in $\mathfrak{t}^*=(\mathbb{R}^n)^*$ which is the convex hull of the image of fixed points of the torus action. $\Delta$ is thus called the \emph{moment polytope}. In a famous theorem, Delzant proved that compact toric symplectic manifolds are classified by their moment polytopes $\Delta$ up to equivariant symplectomorphism \cite{Del}. The polytopes appearing in this classifying scheme are thus called \emph{Delzant polytopes}.

Given a compact toric symplectic manifold $(M,\Omega, \mathbb{T}, \mu)$, Guillemin in \cite{Gul} showed that compatible $\mathbb{T}$-invariant K$\ddot{a}$hler structures are also determined by data specified on the moment polytope $\Delta$. The following is a sketch of the basic ideas.

Let $\mathring{\Delta}$ be the interior of $\Delta$. Then the open dense subset $\mathring{M}:=\mu^{-1}(\mathring{\Delta})$ consists of points at which $\mathbb{T}$ acts freely. Topologically, $\mu: \mathring{M}\rightarrow\mathring{\Delta}$ is a trivial principal $\mathbb{T}$-bundle over $\mathring{\Delta}$. Denote the set of $\mathbb{T}$-invariant complex structures on $M$ compatible with $\Omega$ by $K_\Omega^{\mathbb{T}}(M)$, i.e. the set of toric K$\ddot{a}$hler structures on $M$. Let $I\in K_\Omega^{\mathbb{T}}(M)$ and $\{X_j\}$ be the fundamental vector fields corresponding to a fixed basis $\{e_j\}$ of $\mathfrak{t}$. Then $\{X_j, IX_j\}$ is a global frame of $T\mathring{M}$ and the Lie bracket of any two vector fields in this frame vanishes. Let $\{\zeta_j,\vartheta_j\}$ be the dual frame on $T^*\mathring{M}$. Then $d\zeta_j=d\vartheta_j=0$ and thus locally $\zeta_j=d\theta_j$ and $\vartheta_j=du_j$. $\theta_j+\sqrt{-1}u_j$ are then local holomorphic coordinates of $\mathring{M}$ (these $u_j$'s are actually globally defined on $\mathring{M}$ due to the fact that $\mathring{\Delta}$ is simply connected). On the other side, $\{\vartheta_j\}$ and $\{d\mu_j\}$ determine the same integrable Lagrangian distribution $\mathcal{D}$ generated by those $X_j$'s and thus these $u_j$'s are functions depending only on $\mu$, i.e.
\begin{equation}du_j=-\sum_{k=1}^n\phi_{jk}(\mu)d\mu_k,\label{comp1}\end{equation}
or \footnote{As a convention, we have written $d\theta_j$'s or $d\mu_j$'s in a column. Similar notation is used below.}
\[I^*\left(
       \begin{array}{c}
         d\theta \\
         d\mu \\
       \end{array}
     \right)=\left(
               \begin{array}{cc}
                 0 & \phi \\
                 -\phi^{-1} & 0 \\
               \end{array}
             \right)\left(
                      \begin{array}{c}
                        d\theta \\
                        d\mu \\
                      \end{array}
                    \right).\]
These $\theta_j, \mu_j$ are actually Darboux coordinates, i.e. on $\mathring{M}$, $\Omega=\sum_{j=1}^nd\mu_j\wedge d\theta_j$.

Compatibility of $I$ with $\Omega$ forces the matrix $\phi=(\phi_{jk})$ to be symmetric and positive-definite, and integrability of Eq.~(\ref{comp1}) implies that $\phi$ ought to be the Hessian of a function $\tau$ defined on $\mathring{\Delta}$, or in other words $\tau$ is strictly convex. Due to the cental role of $\tau$, it is called the \emph{symplectic potential} of the invariant K$\ddot{a}$hler structure $I$, which provides a very useful computational tool in examining geometric ideas in K$\ddot{a}$hler geometry. The argument can go in the converse direction, i.e. a strictly convex function $\tau$ on $\mathring{\Delta}$ can be used to construct a toric K$\ddot{a}$hler structure on $\mathring{M}$. However, to extend the structure smoothly to the whole of $M$ requires $\tau$ to satisfy certain asymptotic conditions when approaching the boundary of $\Delta$.

Each Delzant polytope $\Delta$ can be associated with a canonical toric K$\ddot{a}$hler manifold $M_\Delta$ \cite{Del} and its symplectic potential can be totally determined by the data defining the polytope \cite{Gul}. If $\Delta$ in $\mathfrak{t}^*=(\mathbb{R}^n)^*$ is defined by
\[l_j(x):=(u_j, x)\geq \lambda_j,\quad j=1,2,\cdots, d\]
where the linear equations $l_j(x)=\lambda_j$ define faces of codimension 1 of $\Delta$ and $d$ is the number of such faces,
then the canonical symplectic potential on $M_\Delta$ is given by Guillemin's formula:
\begin{equation}\tau(x)=\frac{1}{2}\sum_{j=1}^dl_j(x)\ln l_j(x)\label{formu}\end{equation}

Boulanger's generalization in the GK setting went in a similar spirit. He considered $\mathbb{T}$-invariant GK structures $(\mathbb{J}_1, \mathbb{J}_2)$ of symplectic type on $(M,\Omega, \mathbb{T}, \mu)$, where $\mathbb{J}_2$ is a B-transform of $\mathbb{J}_\Omega$. Then the complex structure $I$ in the above argument is replaced by $J_+$ underlying the biHermitian description of the GK structure. However, the weaker condition of tameness no longer in general ensures that $\theta_j, \mu_j$ be Darboux coordinates. Boulanger thus focused on a special case to reserve this property. Denote the space of $\mathbb{T}$-invariant GK structures of symplectic type by $GK_\Omega^{\mathbb{T}}(M)$. Then an element of $GK_\Omega^{\mathbb{T}}(M)$ is called \emph{anti-diagonal} if for the underlying complex structures $J_\pm$ the condition $J_+\mathcal{D}=J_-\mathcal{D}$ holds, where $\mathcal{D}$ is again the Lagrangian distribution generated by $\{X_j\}$.

Let us introduce some notation before proceeding further. As in \cite{Bou}, denote this subset of anti-diagonal elements in $GK_\Omega^{\mathbb{T}}(M)$ by $DGK_\Omega^{\mathbb{T}}(M)$. Since an element in $GK_\Omega^{\mathbb{T}}(M)$ is completely parameterized by its associated complex structure $J_+$, we usually write $J_+\in GK_\Omega^{\mathbb{T}}(M)$ to imply this fact. Sometimes we also write $\mathbb{J}_1\in GK_\Omega^{\mathbb{T}}(M)$ if we want to emphasize the GC aspect of the underlying structures. Similar notation is adopted for elements in $DGK_\Omega^{\mathbb{T}}(M)$.

For $J_+\in DGK_\Omega^{\mathbb{T}}(M)$, $\theta_j, \mu_j$ are again Darboux coordinates (called \emph{admissible coordinates associated to $J_+$} in \cite{Bou}) and with such coordinates $J_\pm$ are of a form similar to Abreu-Guillemin's case
\begin{equation*}J_+^*\left(
       \begin{array}{c}
         d\theta \\
         d\mu \\
       \end{array}
     \right)=\left(
               \begin{array}{cc}
                 0 & \phi^T \\
                 -(\phi^{-1})^T & 0 \\
               \end{array}
             \right)\left(
                      \begin{array}{c}
                        d\theta \\
                        d\mu \\
                      \end{array}
                    \right),\quad J_-^*\left(
       \begin{array}{c}
         d\theta \\
         d\mu \\
       \end{array}
     \right)=\left(
               \begin{array}{cc}
                 0 & \phi \\
                 -\phi^{-1} & 0 \\
               \end{array}
             \right)\left(
                      \begin{array}{c}
                        d\theta \\
                        d\mu \\
                      \end{array}
                    \right)
\label{J+-}\end{equation*}
except that $\phi$ is not necessarily symmetric. Note that here $\phi^T$ denotes the transpose of $\phi$. Integrability of $J_\pm$ then forces the symmetric part $\phi_s$ ($=(\phi+\phi^T)/2$) of $\phi$ to be the Hessian of a function $\tau$ on $\mathring{\Delta}$ and the anti-symmetric part $C=\phi_a$ ($=(\phi-\phi^T)/2$) to be a constant $n\times n$ anti-symmetric matrix. Tameness then simply means that $\tau$ is strictly convex. A sketch of this argument can be found in the next subsection in a more general setting, or see \cite{Wang2} for a detailed account.

In \cite{Wang2}, it was further proved that Boulanger's $\tau$ is actually the symplectic potential of a \emph{genuine} toric K$\ddot{a}$hler structure $J_0$ canonically associated to $J_+$. Conversely, given a toric K$\ddot{a}$hler structure and an $n\times n$ constant anti-symmetric matrix $C$, there is a canonical way to construct an anti-diagonal toric GK structure of symplectic type. This is a rather nontrivial statement since it tells us that in this more general setting the symplectic potential $\tau$ has the same asymptotic behavior as that in the toric K$\ddot{a}$hler case when approaching the boundary of $\Delta$. Moreover, the underlying GC structure $\mathbb{J}_1$ is simply a B-transform of $\mathbb{J}_\beta$ induced from a $J_0$-holomorphic Poisson structure $\beta$ characterized by the matrix $C$, i.e. $\beta=1/2\sum_{j,k=1}^nC_{kj}X_j^h\wedge X_k^h$, where $X_j^h$ is the $J_0$-holomorphic part of $X_j$.

A fact we shall mention here is that by abuse of language, we will not distinguish $\mathbb{T}$-invariant smooth functions on $M$ (or $\mathring{M}$) from smooth functions on $\Delta$ (or $\mathring{\Delta}$) as is often done in the literature.
\subsection{General toric GK structures of symplectic type}\label{Log}
Let us begin with recalling a theorem from \cite{Wang2}.
Fix a basis $\{e_j\}$ of $\mathfrak{t}$ and let $\{\mu_j\}$ be the corresponding components of $\mu$. Note again that $\mathring{M}$ is a trivial principal $\mathbb{T}$-bundle over $\mathring{\Delta}$. Let $\zeta=\sum_j\zeta_je_j$ be a flat connection on this $\mathbb{T}$-bundle. Since the vertical distribution is Lagrangian, there exists a 1-form $\sigma_\zeta=\sum_j h_jd\mu_j$ with $h_j$ depending only on $\mu$ such that
\[\Omega=\sum_j d\mu_j\wedge \zeta_j+d\sigma_\zeta.\]
We call the matrix $F_\zeta:=(h_{k,j}-h_{j,k})$ the associated matrix of the connection $\zeta$. Obviously, $F_\zeta$ is determined by $\zeta$. If $F_\zeta$ happens to be a constant matrix, we say $\zeta$ is an \emph{admissible connection}. If furthermore $F_\zeta\equiv0$, we say $\zeta$ is of \emph{Darboux type}.

\begin{theorem}\label{GKG}(\cite{Wang2}) An element $J_+\in GK_\Omega^{\mathbb{T}}(\mathring{M})$ is determined by a triple $(\zeta^+,\tau, C)$ where $\zeta^+$ is an admissible connection on $\mathring{M}$, $C$ is an $n\times n$ constant anti-symmetric real matrix and $\tau$ is a strictly convex function on $\mathring{\Delta}$ such that its Hessian $\phi_s$ satisfies the condition below
\begin{equation}\phi_s+\frac{1}{4}F_{\zeta^+}(\phi_s)^{-1}F_{\zeta^+}\quad\textup{is positive-definite on $\mathring{\Delta}$}.\label{posi}\end{equation} Conversely, such a triple $(\zeta^+,\tau, C)$ also gives rise to an element in $GK_\Omega^{\mathbb{T}}(\mathring{M})$.\label{GK}
\end{theorem}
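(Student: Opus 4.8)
The plan is to establish both directions by a single linear-algebra computation carried out in adapted coordinates on the trivial principal bundle $\mu\colon\mathring{M}\to\mathring{\Delta}$, using the correspondence recalled in \S~\ref{back}: giving an element of $GK_\Omega^{\mathbb{T}}(\mathring{M})$ is the same as giving a $\mathbb{T}$-invariant complex structure $J_+$ tamed by $\Omega$ whose symplectic adjoint $J_-=J_+^\Omega$ is again integrable, the rest of the biHermitian data being recovered through (\ref{sy}). Write $\mathcal{D}$ for the vertical Lagrangian distribution spanned by the fundamental vector fields $X_j$. Given $J_+\in GK_\Omega^{\mathbb{T}}(\mathring{M})$, taming forces $\mathcal{D}\cap J_+\mathcal{D}=0$: if $v\in\mathcal{D}$ and $J_+v\in\mathcal{D}$ then $\Omega(J_+v,v)=0$ because $\mathcal{D}$ is Lagrangian, whereas $-\Omega(J_+v,v)>0$ unless $v=0$. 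Hence $T\mathring{M}=\mathcal{D}\oplus J_+\mathcal{D}$; since $J_+$ is integrable, $\mathbb{T}$-invariant and the $X_j$ commute, the Nijenhuis identity gives $[J_+X_i,J_+X_j]=0$, so $J_+\mathcal{D}$ is an involutive $\mathbb{T}$-invariant complement of $\mathcal{D}$ --- the horizontal distribution of a flat invariant principal connection $\zeta^+$. Choosing angle functions $\theta^+_j$ with $\zeta^+_j=d\theta^+_j$ (possible because $\mathring{\Delta}$ is simply connected), one reads off in the coordinates $(\theta^+,\mu)$, from $J_+\mathcal{D}=\ker\zeta^+$ together with $\mathbb{T}$-invariance, that $J_+^*$ is block anti-diagonal with a single invertible block $P=P(\mu)$, namely $J_+^*d\theta^+_j=\sum_kP_{jk}\,d\mu_k$ and $J_+^*d\mu_j=-\sum_k(P^{-1})_{jk}\,d\theta^+_k$; while the general identity of \S~\ref{Log} reads $\Omega=\sum_jd\mu_j\wedge d\theta^+_j+d\sigma_{\zeta^+}$, so that the $d\mu\wedge d\mu$ part of $\Omega$ in these coordinates is exactly the one defining $F_{\zeta^+}$. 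A little linear algebra then expresses the operator $J_-^*$ dual to $J_-=J_+^\Omega$ in the same coordinates in terms of $P$ and $F_{\zeta^+}$.

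For the forward implication I would impose integrability and taming. Integrability of $J_+$ amounts, after reducing $d$ of a $(1,0)$-form modulo the $(1,0)$-ideal (in which every $d\mu_a$ is congruent to a linear combination of the conjugated $(0,1)$-generators), to $\partial_aP_{cb}=\partial_bP_{ca}$ for all indices, i.e.\ each row of $P$ is a $\mu$-gradient. The same computation applied to $J_-^*$ --- carried out in these \emph{non-Darboux} coordinates --- is the technical heart: it yields that each column of $P$ is a $\mu$-gradient and, exploiting the antisymmetry of $F_{\zeta^+}$, that $F_{\zeta^+}$ must be a constant matrix, so that $\zeta^+$ is automatically an admissible connection. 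Rows and columns of $P$ being gradients forces the symmetric part $\phi_s$ of $P$ to be the Hessian of a function $\tau$ on $\mathring{\Delta}$ and the antisymmetric part of $P$ to be constant, from which one records the matrix $C$. Finally, taming says the symmetric part of $-\Omega J_+$ is positive-definite; in the coordinates $(\theta^+,\mu)$ that symmetric part is the block matrix $\left(\begin{smallmatrix}(P^{-1})_s & -\tfrac12\,P^{-T}F_{\zeta^+}\\ \tfrac12\,F_{\zeta^+}P^{-1} & \phi_s\end{smallmatrix}\right)$, and since $(P^{-1})_s=P^{-T}\phi_sP^{-1}$, a Schur-complement argument shows it is positive-definite exactly when $\phi_s$ is positive-definite --- strict convexity of $\tau$ --- and $\phi_s+\tfrac14F_{\zeta^+}\phi_s^{-1}F_{\zeta^+}$ is positive-definite, i.e.\ precisely when (\ref{posi}) holds.

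For the converse I would reverse each step. Given a triple $(\zeta^+,\tau,C)$ with $\zeta^+$ admissible, $C$ constant antisymmetric, and $\tau$ strictly convex satisfying (\ref{posi}), the matrix $F_{\zeta^+}$ is constant antisymmetric; one reconstructs $P$ from $\mathrm{Hess}\,\tau$, $C$ and $F_{\zeta^+}$ by the dictionary just found and defines $J_+^*$ in the coordinates $(\theta^+,\mu)$ by the block-anti-diagonal formula. Then $P$ is invertible because its symmetric part $\mathrm{Hess}\,\tau$ is positive-definite, so $J_+$ is a genuine $\mathbb{T}$-invariant almost complex structure; $J_+$ is integrable since the rows of $P$ are gradients ($\mathrm{Hess}\,\tau$ plus a constant); $J_-=J_+^\Omega$ is integrable since $C$ and $F_{\zeta^+}$ are constant; and $J_+$ is tamed by $\Omega$ because the Schur-complement computation above, read in reverse, turns (\ref{posi}) into positivity of the symmetric part of $-\Omega J_+$. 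By the result of \cite{En} recalled in \S~\ref{back}, $J_+$ then determines a GC structure $\mathbb{J}_1$ which, together with the $B$-transform of $\mathbb{J}_\Omega$ prescribed by the $b$ of (\ref{sy}), is a GK structure of symplectic type on $\mathring{M}$, manifestly $\mathbb{T}$-invariant.

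The step I expect to be the main obstacle is the integrability analysis of $J_-$ in the coordinates $(\theta^+,\mu)$, where $J_-^*$ is \emph{not} block anti-diagonal --- its $d\theta^+\mapsto d\theta^+$ component is a nonzero matrix built from $P^{-1}$ and $F_{\zeta^+}$. One has to push the closedness-of-the-$(1,0)$-ideal condition through this fuller matrix and recognize that it is precisely this condition --- not integrability of $J_+$ --- that forces $F_{\zeta^+}$ to be constant, hence $\zeta^+$ admissible, while simultaneously exhibiting $\phi_s$ as a Hessian and pinning down the constant $C$. A related subtlety is that only the weak taming condition, rather than full $J_+$-compatibility of $\Omega$, is available, so one must check that it yields exactly (\ref{posi}) and nothing stronger. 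It may streamline the bookkeeping to organize everything around the three flat connections $\zeta^+$ (horizontal $J_+\mathcal{D}$), $\zeta^-$ (horizontal $J_-\mathcal{D}$) and the Darboux connection $\zeta^0$ (for which the associated matrix vanishes), which turn out to be related to one another through $F_{\zeta^+}$; this makes visible the single coordinate system in which $\Omega$, $J_+$ and $J_-$ all take their simplest shapes.
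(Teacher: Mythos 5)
Your proposal is correct and takes essentially the same route as the paper's own proof: tameness gives the frame $\{X_j,J_+X_j\}$, integrability and invariance make $\zeta^+$ a flat connection, the block form of $J_\pm^*$ in $\{\zeta^+,d\mu\}$ yields the gradient conditions on rows/columns of $\phi$ (hence $\phi_s=\mathrm{Hess}\,\tau$, $\phi_a=C$ constant) together with constancy of $F_{\zeta^+}$, and positivity of the induced metric is exactly strict convexity plus (\ref{posi}). The only cosmetic differences are that the paper extracts constancy of $F_{\zeta^+}$ by introducing the second flat connection $\zeta^-$ and writing $\zeta^-_j=\zeta^+_j+df_j$ (rather than pushing $J_-$-integrability directly through its non-anti-diagonal matrix in the $(\theta^+,\mu)$ coordinates, which amounts to the same computation), and it phrases the positivity as positive-definiteness of $g=\tfrac12(J_+^*+J_-^*)\Omega$ rather than your Schur-complement reading of the taming condition.
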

\begin{proof}For the reader's convenience, we sketch the proof here and a detailed version can be found in \cite{Wang2}.

Let $J_+\in GK_\Omega^{\mathbb{T}}(\mathring{M})$ and $X_j$ be the fundamental vector field generated by $e_j$. Tameness of $J_+$ with $\Omega$ assures that $\{X_j, J_+X_j\}$ be a global frame of $T\mathring{M}$. Let $\{\zeta_j^+, \vartheta_j\}$ be the corresponding dual frame of $T^*\mathring{M}$. Since $J_+$ is integrable and the action of $\mathbb{T}$ is abelian, $\zeta^+:=\sum_j\zeta_j^+e_j$ gives rise to a flat connection on $\mathring{M}$. Locally $\zeta_j^+=d\theta_j^+$, $\vartheta_j=du_j^+$ and $\{\theta_j^++\sqrt{-1}u_j^+\}$ is a local $J_+$-holomorphic coordinate system on $\mathring{M}$. Since $\{du_j^+\}$ and $\{d\mu_j\}$ determine the same distribution $\mathcal{D}$, $du_j^+=-\sum_k\phi_{jk}d\mu_k$,
where $\phi_{jk}$'s are functions only of $\mu$; in particular,
\begin{equation}J_+^*\left(
       \begin{array}{c}
         \zeta^+ \\
         d\mu \\
       \end{array}
     \right)=\left(
               \begin{array}{cc}
                 0 & \phi^T \\
                 -(\phi^{-1})^T & 0 \\
               \end{array}
             \right)\left(
                      \begin{array}{c}
                        \zeta^+ \\
                        d\mu \\
                      \end{array}
                    \right),\label{J+}\end{equation}
and for a certain matrix-valued function $F=(F_{kj})$,
\[\Omega=\sum_j d\mu_j\wedge \zeta^+_j+\frac{1}{2}\sum_{j,k}F_{kj}d\mu_j\wedge d\mu_k.\]

The same argument applies to $J_-$ as well. There should be a flat connection $\zeta^-$ and a matrix-valued function $\psi$ only of $\mu$ such that
\[J_-^*\left(
       \begin{array}{c}
         \zeta^- \\
         d\mu \\
       \end{array}
     \right)=\left(
               \begin{array}{cc}
                 0 & \psi^T \\
                 -(\psi^{-1})^T & 0 \\
               \end{array}
             \right)\left(
                      \begin{array}{c}
                        \zeta^- \\
                        d\mu \\
                      \end{array}
                    \right).\]
However $\psi$ is nothing else but $\phi^T$. Actually, in the coordinates $\{\theta_j^+, \mu_j\}$,
\begin{eqnarray*}\zeta^-&=&-\psi^TJ_-^*d\mu=\psi^T\Omega J_+\Omega^{-1}(d\mu)\\
&=&-\psi^T\Omega J_+(\partial_{\theta^+})=\psi^T\phi^{-1}\Omega(\partial_\mu)\\
&=&\psi^T\phi^{-1}(\zeta^++F d\mu).\end{eqnarray*}
Since $\zeta^+$ and $\zeta^-$ are both flat connections on the same principal $\mathbb{T}$-bundle, we must have
\[\zeta^-_j=\zeta^+_j+df_j\]
for some functions $f_j$ depending only on $\mu$. This observation implies $\psi^T\phi^{-1}=\textup{I}$ where $\textup{I}$ is the identity matrix or equivalently $\psi=\phi^T$ as required. Additionally, we must also have $F_{kj}=f_{j,k}$. Therefore, by taking a derivative, we have
\[F_{kj,l}=f_{j,kl}=f_{j,lk}=F_{lj,k},\]
which, together with $F_{kj}=-F_{jk}$, immediately implies that $F_{kj,l}=0$ and consequently that $F$ is actually an anti-symmetric constant matrix, i.e. $\zeta^\pm$ are both admissible connections.

Since $\zeta^+_j-\sqrt{-1}\sum_k\phi_{jk}d\mu_k$ and $\zeta^-_j-\sqrt{-1}\sum_k\phi_{kj}d\mu_k$ are holomorphic 1-forms w.r.t. $J_\pm$ respectively, integrability of $J_\pm$ thus implies
\begin{equation}\phi_{kj,l}=\phi_{lj,k}, \quad \phi_{jk,l}=\phi_{jl,k}.\label{pr}\end{equation}
Then we can conclude just as Boulanger had done in \cite{Bou} that the anti-symmetric part $\phi_a$ of $\phi$ should be a constant matrix $C$ and the symmetric part $\phi_s$ of $\phi$ be the Hessian of a function $\tau$ defined on $\mathring{\Delta}$.

To see what tameness of $J_+$ with $\Omega$ means, we should derive the matrix form of the metric $g$. Note that in the frame $\{\zeta^+, d\mu\}$,
 \[J_-^*\sim \left(
               \begin{array}{cc}
                 \textup{I} & -F \\
                 0 & \textup{I} \\
               \end{array}
             \right)\left(
                      \begin{array}{cc}
                        0 & \phi \\
                        -\phi^{-1} & 0 \\
                      \end{array}
                    \right)\left(
               \begin{array}{cc}
                 \textup{I} & F \\
                 0 & \textup{I} \\
               \end{array}
             \right)=\left(
                       \begin{array}{cc}
                         F\phi^{-1} & F\phi^{-1}F+\phi \\
                         -\phi^{-1} & -\phi^{-1}F \\
                       \end{array}
                     \right).
 \]
 Then from the formula $g=1/2(J_+^*+J_-^*)\Omega$, we can obtain the matrix form of $g$ relative to $\{\zeta^+, d\mu\}$:
 \[g\sim \left(
           \begin{array}{cc}
             (\phi^{-1})_s & \phi^{-1}F/2 \\
             -F(\phi^{T})^{-1}/2 & \phi_s\\
           \end{array}
         \right).
 \]
 It's elementary to find that positive-definiteness of $g$ is equivalent to that both $\phi_s$ and $\phi_s+1/4F(\phi_s)^{-1}F$ are positive-definite. Thus $\tau$ should satisfy the properties listed in the theorem. Clearly, the triple $(\zeta^+,\tau, C)$ determines $J_+$ uniquely.

 Conversely, given the triple $(\zeta^+,\tau, C)$ satisfying the conditions listed in the theorem, let $\phi_s$ be the Hessian of $\tau$ and $\phi=\phi_s+C$ and define $J_+$ in the manner of Eq.~(\ref{J+}). Obviously such a $J_+\in GK_\Omega^{\mathbb{T}}(\mathring{M})$.
\end{proof}

 Before moving on, let us motivate our further steps by giving some remarks on the implication of Thm.~\ref{GKG}. In this theorem, if $F=0$, then we recover Boulanger's result for anit-diagonal GK structures of symplectic type. In contrast with this more restrictive case, we should emphasize that in general \emph{two} constant anti-symmetric matrices $C$ and $F$ are involved in the characterization of a toric GK structure of symplectic type. Compared with $C$, this additional $F$ turns out to play a very different role: In the anti-diagonal case, only one flat connection $\zeta^+$ of Darboux type is involved, and furthermore in the \emph{single} frame $\{\zeta^+, d\mu\}$, $J_\pm$ can be anti-diagonalized \emph{simultaneously}. However, in the general case, \emph{three} flat connections are involved: two admissible connections $\zeta^\pm$ associated with $J_\pm$ respectively and a flat connection $\zeta$ of Darboux type, i.e., $\zeta:=(\zeta^++\zeta^-)/2$ such that $\Omega=\sum_jd\mu_j\wedge \zeta_j$. These connections are related by
\begin{equation}\zeta^\pm=\zeta\mp\frac{1}{2}F d\mu.\end{equation}
In particular, $J_\pm$ fails to be anti-diagonalized simultaneously in a single frame. To understand the roles played by $\tau, C$ and $F$, it turns out to be very important to distinguish among these flat connections.

As mentioned in the former subsection, the symplectic potential for $J_+\in DGK_\Omega^{\mathbb{T}}(M)$ is actually the symplectic potential of a genuine toric K$\ddot{a}$hler structure in Abreu-Guillemin theory. It's a natural question to ask whether the symplectic potential $\tau$ in Thm.~\ref{GK} for $J_+\in GK_\Omega^{\mathbb{T}}(M)$ comes from a genuine toric K$\ddot{a}$hler structure \emph{in general}. We shall provide an affirmative answer to this question, but in this subsection we only give a local and partial answer.

In the present context, for $J_+\in GK_\Omega^{\mathbb{T}}(M)$, due to Thm.~\ref{GK} we can define two \emph{new} complex structures $I_\pm$ on $\mathring{M}$ by claiming their matrix forms in the frame $\{\zeta, d\mu\}$ to be
\begin{equation}I_+^*\sim \left(
            \begin{array}{cc}
              0 & \phi^T \\
              -(\phi^{-1})^T & 0 \\
            \end{array}
          \right),\quad I_-^*\sim \left(
            \begin{array}{cc}
              0 & \phi \\
              -\phi^{-1} & 0 \\
            \end{array}
          \right).\label{nc}\end{equation}
It should be emphasized that though having the same matrix forms, $I_\pm$ are different from $J_\pm$ because they are defined using the flat connection $\zeta$ of Darboux type rather than the admissible ones $\zeta^\pm$ associated to $J_\pm$; in particular, up to now we only know that $I_\pm$ are defined on $\mathring{M}$ rather than $M$.

By construction, obviously we have $I_+\in DGK_{\Omega}^{\mathbb{T}}(\mathring{M})$, i.e. $I_+$ is an anti-diagonal toric GK structure of symplectic type on $\mathring{M}$. Then there is a \emph{fifth} complex structure $I_0$ (called the average complex structure of $I_\pm$ in \cite{Wang2}) whose matrix form w.r.t. $\{\zeta, d\mu\}$ is
\begin{equation}I_0^*\sim \left(
            \begin{array}{cc}
              0 & \phi_s \\
              -(\phi_s)^{-1} & 0 \\
            \end{array}
          \right).\label{aver}\end{equation}
Then we know from \cite[Thm.~4.4, 4.5]{Wang2} that $I_0\in K_{\Omega}^{\mathbb{T}}(\mathring{M})$ and $\tau$ is the symplectic potential of $I_0$ on $\mathring{M}$, and that $\phi_a=C$ determines an $I_0$-holomorphic Poisson structure $\beta$ on $\mathring{M}$.

There is a \emph{sixth almost} complex structure $J_0$ on $\mathring{M}$. Note that $\mathbb{J}_2$ is a B-transform of $\mathbb{J}_\Omega$ by the two form $b$. In this context, the classical infinitesimal action of $\mathfrak{t}$ on $M$ obtains a cotangent correction: $X_j\mapsto X_j-b(X_j)$. This action should be understood in the formalism of extended Lie algebra actions in \cite{BCG1} (or \cite{Wang} for this simple case). Note that
\[-\mathbb{J}_1(X_j-b(X_j)=\mathbb{J}_1\mathbb{J}_2^2(X_j-b(X_j))=\mathcal{G}\Omega(X_j)=-g^{-1}d\mu_j.\]
Let $Y_j:=-g^{-1}d\mu_j$. These $Y_j$'s are orthogonal to $X_k$'s. Actually,
\[g(Y_j, X_k)=-(d\mu_j, X_k)=0\]
because $\mu$ should be $\mathbb{T}$-invariant. Thus $\{X_j, Y_j\}$ does form a global frame of $T\mathring{M}$ and $J_0$ could be simply defined by letting $J_0X_j=Y_j$. In the frame $\{\partial_{\theta^+}, \partial_\mu\}$, the matrix form of $J_0$ is
\begin{equation}J_0\sim \left(
            \begin{array}{cc}
              -\frac{1}{2}\Xi^{-1}F(\phi_s)^{-1}\phi & -\Xi^{-1} \\
              \Xi[\textup{I}+(\frac{1}{2}\Xi^{-1}F(\phi_s)^{-1}\phi)^2] & \frac{1}{2}F(\phi_s)^{-1}\phi\Xi^{-1} \\
            \end{array}
          \right),\label{comp}
\end{equation}
where $\Xi=\phi_s+1/4F(\phi_s)^{-1}F$.

If $F=0$ (the three flat connections thus coincide), i.e. the toric GK structure $J_+$ is anti-diagonal, then the above $J_0$ is integrable, coincides with $I_0$ and plays a fundamental role in understanding the underlying geometry \cite{Wang2}. In our present setting, $J_0$ is not integrable, but since it is naturally associated to the GC structure $\mathbb{J}_1$ and may have some importance, we choose to include it here.

Another use of these $X_j, Y_j$ is that the smooth distribution $\mathcal{D}_1$ (in the sense of Sussmann \cite{Sus}) generated by them preserves $\beta_1$, as was noted in the remark of \cite[Prop.~4.6]{Wang}. This observation implies partially
\begin{proposition}
For $\mathbb{J}_1\in GK_\Omega^{\mathbb{T}}(M)$, points in $\mathring{M}$ are all regular, and the common type is the co-rank of the complex matrix $F/2-\sqrt{-1}\phi_a$.\label{reg}
\end{proposition}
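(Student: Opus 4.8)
The plan is to extract both assertions from the real Poisson structure attached to $\mathbb{J}_1$. Recall (from the biHermitian description recorded above) that this Poisson bivector is $\beta_1=-\frac{1}{2}(J_+-J_-)g^{-1}$, that a point of $M$ is regular for $\mathbb{J}_1$ exactly when it is a regular point of $\beta_1$ (i.e.\ $\textup{rank}\,\beta_1$ is locally constant there), and that at such a point the type of $\mathbb{J}_1$ equals $n-\frac{1}{2}\textup{rank}\,\beta_1$. Since $g^{-1}\colon T^*M\to TM$ is an isomorphism, $\textup{rank}\,\beta_1=\textup{rank}(J_+-J_-)$, so everything reduces to computing the rank of the real operator $J_+-J_-$ along $\mathring{M}$ and showing it is constant.

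For this I would complexify and write $J_+-J_-=2\sqrt{-1}(\Pi_+-\Pi_-)$ on $T_\mathbb{C}\mathring{M}$, where $\Pi_\pm$ is the projection onto $T^{1,0}_{J_\pm}$ along $T^{0,1}_{J_\pm}$; thus $\textup{rank}(J_+-J_-)$ equals the $\mathbb{C}$-rank of $\Pi_+-\Pi_-$. Restricted to $T^{1,0}_{J_+}$ the operator $\Pi_+-\Pi_-$ is the complementary projection $\textup{id}-\Pi_-$ onto $T^{0,1}_{J_-}$, whose kernel is $T^{1,0}_{J_+}\cap T^{1,0}_{J_-}$; restricted to $T^{0,1}_{J_+}$ it is $-\Pi_-$, which lands in $T^{1,0}_{J_-}$. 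Since $T^{1,0}_{J_-}\cap T^{0,1}_{J_-}=0$ and the two restrictions are exchanged by complex conjugation, their images add directly and have equal rank, so $\textup{rank}(J_+-J_-)=2\bigl(n-\dim_\mathbb{C}(T^{1,0}_{J_+}\cap T^{1,0}_{J_-})\bigr)$.

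It then remains to compute $T^{1,0}_{J_+}\cap T^{1,0}_{J_-}$ explicitly, which I would do in the global frame of $T\mathring{M}$ dual to $\{\zeta,d\mu\}$ for the Darboux connection $\zeta$. Using $\zeta^\pm=\zeta\mp\frac{1}{2}F\,d\mu$ to rewrite, in this single frame, the $J_\pm$-holomorphic frames produced by Thm.~\ref{GK}, one gets bases $\{V^+_j\}$, $\{V^-_k\}$ of $T^{1,0}_{J_+}$, $T^{1,0}_{J_-}$ and expands the $V^+_j$ against $\{V^-_k\}\cup\{\overline{V^-_k}\}$. A short linear-algebra computation (using $\phi^T-\phi=-2\phi_a$) shows that the matrix formed by the $\overline{V^-}$-components of the $V^+_j$ is $-(\phi^T)^{-1}\bigl(\phi_a+\frac{\sqrt{-1}}{2}F\bigr)$; since $(\phi^T)^{-1}$ is invertible this matrix has the same rank as $\phi_a+\frac{\sqrt{-1}}{2}F$, equivalently as $\frac{F}{2}-\sqrt{-1}\phi_a$. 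A vector of $T^{1,0}_{J_+}$ lies in $T^{1,0}_{J_-}$ precisely when its coordinate vector is annihilated by this matrix, so $\dim_\mathbb{C}(T^{1,0}_{J_+}\cap T^{1,0}_{J_-})$ equals the co-rank of $\frac{F}{2}-\sqrt{-1}\phi_a$.

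Assembling the pieces: along $\mathring{M}$ one gets $\textup{rank}\,\beta_1=2\,\textup{rank}_\mathbb{C}\bigl(\frac{F}{2}-\sqrt{-1}\phi_a\bigr)$, which is constant because $C=\phi_a$ and $F$ are constant matrices; hence every point of $\mathring{M}$ is regular for $\mathbb{J}_1$ (alternatively, regularity follows at once from the remark above that $\mathcal{D}_1$ preserves $\beta_1$, since $\{X_j,Y_j\}$ is a global frame of $T\mathring{M}$, so the flows of the $X_j,Y_j$ keep $\textup{rank}\,\beta_1$ locally constant). The common type is then $n-\frac{1}{2}\textup{rank}\,\beta_1=n-\textup{rank}_\mathbb{C}\bigl(\frac{F}{2}-\sqrt{-1}\phi_a\bigr)$, the co-rank of $\frac{F}{2}-\sqrt{-1}\phi_a$. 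The one delicate point is the computation in the third paragraph: one must keep the three flat connections $\zeta,\zeta^\pm$ and their associated frames carefully apart and verify that the connection corrections $\pm\frac{1}{2}F\,d\mu$ recombine with the Hessian data so as to leave exactly $\phi_a+\frac{\sqrt{-1}}{2}F$ in the answer; everything else is routine.
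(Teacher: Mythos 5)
Your proposal is correct, and its overall architecture matches the paper's second (computational) argument: reduce the type to $\tfrac12\dim\ker(J_+-J_-)$ via the real Poisson bivector, then do linear algebra in the frames supplied by Thm.~\ref{GK}, keeping the three connections $\zeta,\zeta^\pm$ apart, until the constant matrix $\tfrac{F}{2}-\sqrt{-1}\phi_a$ appears; your closing aside about $\mathcal{D}_1$ preserving $\beta_1$ is exactly the paper's first, intrinsic argument for regularity. The difference is in how the matrix is extracted. The paper works with the Hitchin bivector $\beta_3=-\tfrac12(J_+-J_-)\Omega^{-1}$, writes its matrix in the frame $\{\zeta^+,d\mu\}$, re-expresses it in the moving frame $\{\partial_{\theta^+},J_+\partial_{\theta^+}\}$ where the coefficient matrix $\left(\begin{smallmatrix}-\phi_a & -F/2\\ -F/2 & \phi_a\end{smallmatrix}\right)$ is constant, and then reads off the holomorphic bivector $\beta_+=2\sum_{i,j}[\tfrac12 F_{ij}-\sqrt{-1}(\phi_a)_{ij}]\partial_{z_j^+}\wedge\partial_{z_i^+}$, from which both regularity and the type follow. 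You instead use $\beta_1=-\tfrac12(J_+-J_-)g^{-1}$ (equivalent, since $g^{-1}$ and $\Omega^{-1}$ are both invertible) and compute $\dim_\mathbb{C}(T^{1,0}_{J_+}\cap T^{1,0}_{J_-})$ directly by expanding a $J_+$-holomorphic frame against a $J_-$-holomorphic one; I checked the key matrix and indeed the $\overline{V^-}$-block is $-(\phi^T)^{-1}\bigl(\phi_a+\tfrac{\sqrt{-1}}{2}F\bigr)$, of the same rank as $\tfrac{F}{2}-\sqrt{-1}\phi_a$, so your count of the intersection and hence of $\mathrm{rank}(J_+-J_-)$ is right, and constancy of $C,F$ gives regularity. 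What your route buys is a slightly leaner proof of the proposition itself (no need to compute the full matrix of $\beta_3$ or to invoke the relation between $\beta_3$ and $\beta_\pm$); what the paper's route buys is the explicit formula for $\beta_+$, which is reused afterwards (the remark on $\beta_-$, the statement that fixed points are of complex type, and the type computation on the faces $\mu^{-1}(P)$), so you lose that byproduct.
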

\begin{proof}
Recall that the \emph{type} of $\mathbb{J}_1$ at a point $p\in\mathring{M}$ is the complex dimension transverse to the symplectic leaf of $\beta_1$ through $p$. $p$ is called \emph{regular} if this number is constant around $p$. Since the distribution $\mathcal{D}_1$ has full dimension on $\mathring{M}$, $\mathring{M}$ is actually a leaf of $\mathcal{D}_1$ of the highest dimension. Now that $\beta_1$ is preserved by $\mathcal{D}_1$, the rank of $\beta_1$ on $\mathring{M}$ has to be a constant, i.e., points in $\mathring{M}$ are all regular for $\mathbb{J}_1$.

Besides the above intrinsic proof of the first part of Prop.~\ref{reg}, we can give an alternative proof by a direct local computation. Note that $\beta_3=-1/2(J_+-J_-)\Omega^{-1}$. We can write down the matrix form of $\beta_3$ w.r.t. $\{\zeta^+, d\mu\}$:
\[\beta_3\sim \left(
                \begin{array}{cc}
                  -\phi_a & F\phi^{-1}/2 \\
                  (\phi^{-1})^TF/2 & -(\phi^{-1})_a \\
                \end{array}
              \right),
\]
or as a tensor, $\beta_3$ is
\begin{eqnarray*}
\left(
          \begin{array}{cc}
            \partial_{\theta^+}^T & (J_+\partial_{\theta^+})^T \\
          \end{array}
        \right)&\otimes&\left(
                 \begin{array}{cc}
                   \textup{I} & 0 \\
                   0 & -\phi^T \\
                 \end{array}
               \right)
\left(
                \begin{array}{cc}
                  -\phi_a & F\phi^{-1}/2 \\
                  (\phi^{-1})^TF/2 & -(\phi^{-1})_a \\
                \end{array}
              \right)\left(
                       \begin{array}{cc}
                         \textup{I} & 0 \\
                         0 & -\phi \\
                       \end{array}
                     \right)
              \left(
                       \begin{array}{c}
                         \partial_{\theta^+} \\
                         J_+\partial_{\theta^+} \\
                       \end{array}
                     \right)\\
                     &=&\left(
          \begin{array}{cc}
            \partial_{\theta^+}^T & (J_+\partial_{\theta^+})^T \\
          \end{array}
        \right)\otimes \left(
                         \begin{array}{cc}
                           -\phi_a & -F/2 \\
                           -F/2 & \phi_a \\
                         \end{array}
                       \right) \left(
                       \begin{array}{c}
                         \partial_{\theta^+} \\
                         J_+\partial_{\theta^+} \\
                       \end{array}
                     \right).
\end{eqnarray*}
Note that the type of $\mathbb{J}_1$ is actually half the real dimension of $\textup{ker}(J_+-J_-)$ and that the matrix $\left(
                         \begin{array}{cc}
                           -\phi_a & -F/2 \\
                           -F/2 & \phi_a \\
                         \end{array}
                       \right)$ is constant on $\mathring{M}$. We thus know that points in $\mathring{M}$ are all regular for $\mathbb{J}_1$; in particular, if we denote $z_i^+=\theta_i^++\sqrt{-1}u_i^+$, then it can be easily obtained that
                       \[\beta_+=2\sum_{i, j}[\frac{1}{2}F_{ij}-\sqrt{-1}(\phi_a)_{ij}]\partial_{z_j^+}\wedge \partial_{z_i^+}.\]
Consequently, the common type of $\mathbb{J}_1$ in $\mathring{M}$ is $n-\textup{rk}(F/2-\sqrt{-1}\phi_a)$.
\end{proof}
\emph{Remark}. Similarly, let $z_i^-=\theta_i^-+\sqrt{-1}u_i^-$. Then we have
\[\beta_-=2\sum_{i, j}[\frac{1}{2}F_{ij}-\sqrt{-1}(\phi_a)_{ij}]\partial_{z_j^-}\wedge \partial_{z_i^-}.\]
In particular, we find that $\mathbb{J}_1$ at a fixed point of the $\mathbb{T}$-action is of complex type because the vector fields $X_j$'s vanish there.
\section{Compactification}\label{Com}
In this section, we address the global smoothness of those structures defined on $\mathring{M}$ in \S~\ref{Log}, i.e. whether they can be extended smoothly on the whole of $M$. First we recall a basic lemma from \cite{Wang2}.
\begin{lemma}\label{c1}$I_+\in GK_{\Omega}^{\mathbb{T}}(\mathring{M})$ is the restriction of an element in $GK_{\Omega}^{\mathbb{T}}(M)$ on $\mathring{M}$ if and only if all the canonically associated tensors $\bar{g}$, $\bar{b}$ (see Eq.~\ref{sy}) and $(I_++I_-)^{-1}$ can be extended smoothly to $M$.
\end{lemma}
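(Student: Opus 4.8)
The idea is that, because the symplectic form $\Omega$ is smooth and everywhere nondegenerate on $M$, the structure $I_+$ is essentially nothing more than the pair of tensors $\bar{g},\bar{b}$, so the whole question reduces to asking which of the defining axioms of an element of $GK_\Omega^{\mathbb{T}}(M)$ survive when these tensors are extended smoothly across $\mu^{-1}(\partial\Delta)$. Applying the identities (\ref{sy}) to $I_\pm$ one has $\bar{g}=-\frac{1}{2}\Omega(I_++I_-)$ and $\bar{b}=-\frac{1}{2}\Omega(I_+-I_-)$, hence $-\Omega I_+=\bar{g}+\bar{b}$ and $-\Omega I_-=\bar{g}-\bar{b}$; moreover, using $I_-=I_+^\Omega=-\Omega^{-1}I_+^*\Omega$, one checks that $\bar{g}$ and $\bar{b}$ are precisely the symmetric and anti-symmetric parts of the bilinear form $-\Omega I_+$. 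Consequently, on the dense open set $\mathring{M}$ the pair $(\bar{g},\bar{b})$ and the endomorphism $I_+$ (equivalently $I_-$) determine each other, and one of them extends smoothly to $M$ if and only if the other does. Note also that $I_++I_-$ is invertible on $\mathring{M}$ since $\bar{g}$ is positive-definite there.

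For the forward implication there is nothing to prove: if $I_+$ is the restriction of an element of $GK_\Omega^{\mathbb{T}}(M)$, then $I_-=-\Omega^{-1}I_+^*\Omega$ is smooth on $M$, hence so are $\bar{g}$ and $\bar{b}$; and $\bar{g}$ is then a genuine $\mathbb{T}$-invariant Riemannian metric on $M$, so $(I_++I_-)^{-1}=-\frac{1}{2}\bar{g}^{-1}\Omega$ is smooth on $M$ as well.

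For the converse I would argue as follows. The smooth extensions of $\bar{g}$ and $\bar{b}$ furnish smooth endomorphisms $I_+:=-\Omega^{-1}(\bar{g}+\bar{b})$ and $I_-:=-\Omega^{-1}(\bar{g}-\bar{b})$ of $TM$. Since $\mathring{M}$ is dense and $\mathbb{T}$-invariant, the pointwise relations $I_\pm^2=-\mathrm{id}$, $I_-=I_+^\Omega$, the $\mathbb{T}$-invariance, and the vanishing of the Nijenhuis tensors $N_{I_\pm}$ (each a smooth tensor vanishing on the dense set $\mathring{M}$) all persist on $M$ by continuity; thus $I_\pm$ are $\mathbb{T}$-invariant integrable complex structures on $M$ with $I_-=I_+^\Omega$. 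The only remaining point is that $I_+$ be tamed by $\Omega$, i.e. that $\bar{g}=-\frac{1}{2}\Omega(I_++I_-)$ be positive-definite on all of $M$: by continuity from $\mathring{M}$ it is symmetric and positive-semidefinite everywhere, and it is nondegenerate exactly because its inverse $\bar{g}^{-1}=-2(I_++I_-)^{-1}\Omega^{-1}$ extends smoothly to $M$ by hypothesis. By the correspondence of \cite{En} recalled in \S~\ref{back}, a $\mathbb{T}$-invariant $\Omega$-tamed integrable $I_+$ whose symplectic adjoint $I_-$ is integrable determines a well-defined element of $GK_\Omega^{\mathbb{T}}(M)$ (with $\bar{b}$ as its B-field), and by construction its restriction to $\mathring{M}$ is the original one.

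The only genuinely delicate step, and the reason the tensor $(I_++I_-)^{-1}$ cannot be dropped from the statement, is this last one: smooth extendability of $\bar{g}$ (equivalently of $I_+$ as an endomorphism) does not by itself force the extended $\bar{g}$ to remain nondegenerate along $\mu^{-1}(\partial\Delta)$, whereas the generalized-metric/taming condition requires strict positivity. Imposing smooth extension of $(I_++I_-)^{-1}$ is precisely the bookkeeping device that keeps $\bar{g}^{-1}$ bounded near the boundary of the moment polytope; every other verification in the argument is merely continuity from the dense open set $\mathring{M}$.
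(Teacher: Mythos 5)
Your proposal is correct and follows essentially the same route as the paper: the forward direction is immediate, and for the converse one recovers $I_\pm$ from the extended $\bar{g},\bar{b}$ via the invertibility of $\Omega$, gets integrability (and the remaining algebraic identities) by continuity from the dense set $\mathring{M}$, and then uses the smooth extension of $(I_++I_-)^{-1}$ together with the identity $\bar g=-\tfrac12\Omega(I_++I_-)$ to upgrade the nonnegative-definiteness of $\bar g$ on $M\setminus\mathring{M}$ to positive-definiteness. Your closing observation about why the hypothesis on $(I_++I_-)^{-1}$ cannot be dropped matches the role it plays in the paper's argument.
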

\begin{proof}For the reader's convenience, we give a sketch of the proof. Obviously, it suffices to prove the sufficiency part.

Due to the formulae $\bar{g}=-\frac{1}{2}\Omega(I_++I_-)$ and $\bar{b}=-\frac{1}{2}\Omega(I_+-I_-)$, if both $\bar{g}$, $\bar{b}$ can be extended smoothly to $M$, then $I_\pm$ are well-defined smooth tensors on $M$ for $\Omega$ is invertible. A continuity argument makes it clear that $I_\pm$ are actually integrable complex structures on $M$.

By continuity, $\bar{g}$ should be nonnegative-definite on $M\backslash \mathring{M}$. Since $\Omega=-2\bar{g}(I_++I_-)^{-1}$, the smoothness of $(I_++I_-)^{-1}$ implies that $\bar{g}$ must be non-degenerate on $M\backslash \mathring{M}$ and therefore positive-definite there.
\end{proof}
\emph{Remark}. Since $\bar{g}, \bar{b}$ are the symmetric and anti-symmetric parts of $\bar{g}+\bar{b}$ respectively, to establish that $\bar{g}, \bar{b}$ are actually smooth on $M$, it is enough to prove that the sum $\bar{g}+\bar{b}$ is smooth on $M$.

Now let us come back to the context of \S~\ref{Log} and let $I_+\in GK_\Omega^{\mathbb{T}}(\mathring{M})$ be defined in Eq.~(\ref{nc}). We can take $J_+$ as a reference element in $GK_\Omega^{\mathbb{T}}(M)$. If we can prove $\bar{g}-g$, $\bar{b}-b$ and $[(I_++I_-)/2]^{-1}-[(J_++J_-)/2]^{-1}$ all extend smoothly to $M$, then by Lemma~\ref{c1} $I_+$ is therefore the restriction of an element in $DGK_\Omega^{\mathbb{T}}(M)$.
\begin{lemma}Let $J_+\in GK_{\Omega}^{\mathbb{T}}(M)$ and $\Xi:=\phi_s+\frac{1}{4}F(\phi_s)^{-1}F$ in the context of Thm.~\ref{GK}. Then the inverse $\Xi^{-1}$ admits a smooth extension to $M$.\label{lemm}
\label{Xi}\end{lemma}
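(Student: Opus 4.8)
The plan is to recognize $\Xi^{-1}$ not as something to be inverted by hand, but as a contraction of the Riemannian metric $g$ that underlies the GK structure. Since $J_+\in GK_\Omega^{\mathbb{T}}(M)$, the generalized metric, and hence $g$, is a genuine smooth positive-definite tensor on all of $M$; therefore the dual metric $g^{-1}$ on $T^*M$ is smooth on $M$ as well, and since the components $\mu_i$ of the moment map are smooth on $M$, each function $g^{-1}(d\mu_i,d\mu_j)$ is smooth and $\mathbb{T}$-invariant on $M$. The lemma then reduces to the identity
\[(\Xi^{-1})_{ij}=g^{-1}(d\mu_i,d\mu_j)\qquad\text{on }\mathring M,\]
whose right-hand side is the desired smooth extension to $M$ of the matrix-valued function $\Xi^{-1}$ on $\mathring\Delta$.

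To prove this identity I would use the matrix form of $g$ in the frame $\{\zeta^+,d\mu\}$ obtained in the proof of Thm.~\ref{GK},
\[g\sim\left(\begin{array}{cc}(\phi^{-1})_s & \frac{1}{2}\phi^{-1}F\\ -\frac{1}{2}F(\phi^T)^{-1} & \phi_s\end{array}\right)=:\left(\begin{array}{cc}A & B\\ C & D\end{array}\right),\]
note that it is symmetric (one checks $B^T=C$), and recall that $d\mu_i$ is the $(n+i)$-th element of the coframe $\{\zeta^+,d\mu\}$; hence $(g^{-1}(d\mu_i,d\mu_j))_{ij}$ is exactly the lower-right $n\times n$ block of the inverse matrix, that is, the Schur complement $(D-CA^{-1}B)^{-1}$. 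It then only remains to verify the algebraic identity $D-CA^{-1}B=\Xi$: from $(\phi^{-1})_s=\phi^{-1}\phi_s(\phi^T)^{-1}$, so that $[(\phi^{-1})_s]^{-1}=\phi^T(\phi_s)^{-1}\phi$, a one-line computation gives $CA^{-1}B=-\frac{1}{4}F(\phi_s)^{-1}F$, whence $D-CA^{-1}B=\phi_s+\frac{1}{4}F(\phi_s)^{-1}F=\Xi$. These are precisely the sort of block-matrix and Schur-complement facts collected in the appendix.

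I expect this last calculation to be entirely routine, so the only genuine idea is the first move. A frontal attack --- inverting $\Xi=\phi_s+\frac{1}{4}F(\phi_s)^{-1}F$ and checking directly that the result stays bounded and smooth as one approaches $\partial\Delta$ --- is awkward, because $\phi_s$ is the Hessian of the symplectic potential $\tau$ and hence blows up at the boundary, forcing one to analyze the asymptotics of $\tau$ and exhibit a cancellation. Writing $\Xi^{-1}$ as $g^{-1}$ paired with the everywhere-smooth closed forms $d\mu_i$ sidesteps this completely: smoothness of the metric on $M$ automatically yields smoothness of its dual, so the only hypothesis actually used is $J_+\in GK_\Omega^{\mathbb{T}}(M)$. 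The anticipated main obstacle is therefore conceptual rather than technical, namely spotting this contraction and keeping the frame/coframe bookkeeping straight.
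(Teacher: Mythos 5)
Your proof is correct and is essentially the paper's own argument in a slightly different packaging: since $d\mu_i=\iota_{X_i}\Omega$ and $g=-\tfrac12\Omega(J_++J_-)$, your globally smooth function $g^{-1}(d\mu_i,d\mu_j)$ coincides with the quantity $\Omega\bigl((\tfrac{J_++J_-}{2})^{-1}\partial_{\theta^+_i},\partial_{\theta^+_j}\bigr)$ that the paper shows equals $(\Xi^{-1})^{ji}$ on $\mathring M$. The only difference is bookkeeping — you extract $\Xi^{-1}$ as the Schur complement block of the Gram matrix of $g$ in the coframe $\{\zeta^+,d\mu\}$, whereas the paper reads it off from the explicitly inverted matrix of $(J_++J_-)/2$ — and your Schur-complement computation $D-B^TA^{-1}B=\Xi$ is correct.
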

\begin{proof}In the frame $\{\partial_{\theta^+}, \partial_\mu\}$, the invertible map $(J_++J_-)/2$ has the matrix form
\[\frac{J_++J_-}{2}\sim \left(
                          \begin{array}{cc}
                            -(\phi^T)^{-1}F/2 & -(\phi^{-1})_s \\
                           \phi_s+F(\phi^T)^{-1}F/2 & F(\phi^T)^{-1}/2 \\
                          \end{array}
                        \right).\]
A more complicated yet elementary computation shows that its inverse has the matrix form
\begin{equation}(\frac{J_++J_-}{2})^{-1}\sim\left(
    \begin{array}{cc}
      \Xi^{-1}F(\phi_s)^{-1}\phi/2 & \Xi^{-1} \\
      (-\phi^T+AF\Xi^{-1}F/2)(\phi_s)^{-1}\phi & AF\Xi^{-1} \\
    \end{array}
  \right),\label{inver}
\end{equation}
where $A=\frac{\phi^T(\phi_s)^{-1}}{2}-\textup{I}$;
in particular, we find
\[\Omega((\frac{J_++J_-}{2})^{-1}\partial_{\theta^+_i}, \partial_{\theta^+_j})=\Omega(\sum_k(\Xi^{-1})^{ki}\partial_{\mu_k}, \partial_{\theta^+_j})=(\Xi^{-1})^{ji}.\]
Since $\Omega((\frac{J_++J_-}{2})^{-1}\partial_{\theta^+_i}, \partial_{\theta^+_j})$ is smooth on $M$, we know that $\Xi^{-1}$ admits a smooth extension to $M$.
\end{proof}
\begin{theorem}For $J_+\in GK_{\Omega}^{\mathbb{T}}(M)$, $I_+$ defined in Eq.~(\ref{nc}) is actually the restriction of an element in $DGK_\Omega^{\mathbb{T}}(M)$ on $\mathring{M}$.\label{M1}
\end{theorem}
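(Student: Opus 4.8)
The plan is to verify the criterion of Lemma~\ref{c1} for $I_+$, taking $J_+$ as the reference element of $GK_\Omega^{\mathbb{T}}(M)$ and feeding in Lemma~\ref{Xi}. By Lemma~\ref{c1} and the remark after it, it suffices to show that the three differences $\bar g-g$, $\bar b-b$ and $[(I_++I_-)/2]^{-1}-[(J_++J_-)/2]^{-1}$ extend smoothly to $M$, since $g$, $b$ and $[(J_++J_-)/2]^{-1}$ are smooth on $M$. From $\bar g+\bar b=-\Omega I_+$ and $g+b=-\Omega J_+$ the first two amount to the smoothness on $M$ of $I_\pm-J_\pm$; and since $(I_++I_-)/2$ is the toric K\"ahler structure $I_0$ of \eqref{aver}, with $\bar g=-\Omega I_0$, the third is equivalent to $I_0$ itself extending smoothly to $M$, i.e.\ to $\tau$ being the symplectic potential of a genuine compact toric K\"ahler structure.

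To compute these differences I would first pass from the admissible connection $\zeta^+$ to the Darboux-type connection $\zeta=\zeta^++\tfrac12 F\,d\mu$ and rewrite $I_\pm$ and $I_0$, which are built from $\zeta$ in \eqref{nc} and \eqref{aver}, in terms of $J_\pm$, which are built from $\zeta^\pm$. The change of frame is unipotent with constant off-diagonal block $\tfrac12 F$, so in the frame $\{\partial_{\theta^+},\partial_\mu\}$ the operators $I_\pm-J_\pm$, and likewise $[(I_++I_-)/2]^{-1}-[(J_++J_-)/2]^{-1}$ after using the explicit form \eqref{inver}, acquire matrices whose entries are assembled from $F$ and from $\phi=\phi_s+C$, $\phi_s$, $\Xi=\phi_s+\tfrac14 F\phi_s^{-1}F$ and their inverses. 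The mechanism that should make this work is that the genuinely singular pieces --- the occurrences of $\phi$ and $\phi_s$, which blow up at $\partial M$ --- cancel between the $I$-side and the $J$-side, leaving only $F$ (constant), $\phi^{-1}$, which is smooth on $M$ because $(\phi^{-1})_{ij}=-\Omega(J_+X_i,X_j)$ is a pairing of globally smooth tensors, and $\Xi^{-1}$, which is smooth on $M$ by Lemma~\ref{Xi}.

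The step I expect to be the main obstacle is the passage from smoothness of these matrix entries to genuine smoothness of the tensors \emph{across} $\partial M$: the frame $\{\partial_{\theta^+},\partial_\mu\}$ degenerates there, so matrix-entry smoothness is not by itself enough. This is handled by the Abreu--Guillemin boundary analysis, in the same spirit as the proofs of Lemma~\ref{c1} and Lemma~\ref{Xi}: one contracts the relevant operators against the fundamental vector fields $X_j=\partial_{\theta^+_j}$ (smooth on all of $M$, vanishing at the fixed points) and their $J_+$-images, exploiting $\Omega(X_i,X_j)=0$, $\Omega(\partial_{\mu_k},X_j)=\delta_{kj}$, $(\phi^{-1})_{ij}=-\Omega(J_+X_i,X_j)$, $(\Xi^{-1})_{ij}=\Omega([(J_++J_-)/2]^{-1}X_j,X_i)$, and the fact that the singular frame vectors $\partial_{\mu_i}$ enter only through the smooth combinations $\sum_i(\phi^{-1})_{ki}\partial_{\mu_i}=-J_+X_k$. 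Carrying this bookkeeping through --- checking that after every contraction only $F$, $\phi^{-1}$ and $\Xi^{-1}$ survive, and that the blow-ups have indeed cancelled --- is the real content of the argument.

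Once the three differences are known to extend smoothly to $M$, Lemma~\ref{c1} gives that $I_+$ is the restriction of an element of $GK_\Omega^{\mathbb{T}}(M)$. Finally this element is anti-diagonal: by the very construction \eqref{nc}, $I_+$ and $I_-$ are simultaneously anti-diagonalised in the \emph{single} frame $\{\zeta,d\mu\}$ on $\mathring{M}$, so that $I_+\mathcal D=I_-\mathcal D$ (both equal the span of the vector fields dual to the $d\mu_j$) on $\mathring{M}$, and this closed, $\mathbb{T}$-invariant condition persists on all of $M$. Hence $I_+\in DGK_\Omega^{\mathbb{T}}(M)$, as asserted. (Alternatively, having shown that $I_0$ is a genuine compact toric K\"ahler structure with symplectic potential $\tau$, one may invoke the converse construction of \cite{Wang2}: the anti-diagonal toric GK structure of symplectic type it produces from $(\tau,C)$ lives on all of $M$ and has underlying complex structure agreeing with $I_+$ on the dense subset $\mathring{M}$.)
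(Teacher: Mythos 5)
Your overall skeleton is the same as the paper's (take $J_+$ as the reference element, verify the criterion of Lemma~\ref{c1} by comparing with the globally smooth tensors attached to $J_+$, use Lemma~\ref{Xi}, and express everything against the global frames $\{J_+^*d\mu, d\mu\}$ and $\{\partial_{\theta^+}, J_+\partial_{\theta^+}\}$), but the proposal has a genuine gap exactly where the theorem is hard. You assert that after subtracting the $J$-side from the $I$-side the singular blocks cancel and ``only $F$, $\phi^{-1}$ and $\Xi^{-1}$ survive.'' That is not what happens: writing out $[(I_++I_-)/2]^{-1}-[(J_++J_-)/2]^{-1}$ via Eq.~(\ref{inver}) leaves entries built from $\phi(\phi_s)^{-1}\phi$, $F(\phi_s)^{-1}F$, $\Xi^{-1}\phi_s$, $\phi_s\Xi^{-1}\phi_s$, etc., and none of these is a function of $F$, $\phi^{-1}$, $\Xi^{-1}$ alone, because $(\phi^{-1})_s\neq(\phi_s)^{-1}$ when $C\neq0$. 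The crux of the paper's proof is precisely the missing ingredient: that $(\phi_s)^{-1}$ (equivalently $\Xi^{-1}\phi_s$, $\phi_s\Xi^{-1}\phi_s-\phi_s$) extends smoothly to $M$. This is established by a separate argument you do not supply: the pairing $((J_++J_-)^{-1}\partial_{\theta_i^+},J_+^*d\mu_j)$ is smooth on $M$ and equals $-\tfrac12[\Xi^{-1}F(\phi_s)^{-1}]_{ji}$, whence $\Xi^{-1}\phi_s=\textup{I}-\tfrac14\Xi^{-1}F(\phi_s)^{-1}F$ is smooth; then, since $\Xi\leq\phi_s$, one has $\det(\Xi^{-1}\phi_s)\geq1$ on $\mathring{M}$ and hence on $M$ by continuity, so $\Xi^{-1}\phi_s$ is invertible on $M$, and finally $(\phi_s)^{-1}=[(\phi_s)^{-1}\Xi]\,\Xi^{-1}$ is smooth. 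Without some substitute for this step the ``bookkeeping'' you defer to cannot close.

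A second, related error: $(I_++I_-)/2$ is \emph{not} $I_0$. In the frame $\{\zeta,d\mu\}$ its matrix has lower-left block $-(\phi^{-1})_s$, not $-(\phi_s)^{-1}$, so it differs from Eq.~(\ref{aver}) unless $C=0$ (and indeed it is not an almost complex structure in general). Consequently your reduction of the third difference to ``$\tau$ is the symplectic potential of a genuine compact toric K\"ahler structure'' is both incorrect as a reformulation and circular in spirit: in the paper the statement $I_0\in K_\Omega^{\mathbb{T}}(M)$ is Corollary~\ref{cano}, deduced \emph{from} Theorem~\ref{M1} via the anti-diagonal theory of \cite{Wang2}, so it cannot be used as an input here; the same objection applies to your alternative closing argument that invokes the converse construction from $(\tau,C)$, since that presupposes $\tau$ already satisfies the Abreu--Guillemin boundary conditions. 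The final observation that the extension is anti-diagonal (because $I_\pm$ are simultaneously anti-diagonalized in the single frame $\{\zeta,d\mu\}$, so $I_+\mathcal{D}=I_-\mathcal{D}$ on $\mathring{M}$ and the condition is closed) is fine, but the heart of the theorem --- the smooth extension of $(\phi_s)^{-1}$ across $M\backslash\mathring{M}$ --- is missing from the proposal.
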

\begin{proof}We take $J_+$ to be the reference toric GK structure. The first thing we shall do is to write down the matrix forms of $\bar{g}+\bar{b}$ and $[(I_++I_-)/2]^{-1}$ in the frame $\{\zeta^+, d\mu\}$ (in \S~\ref{Log}, $I_\pm$ are defined in the different frame $\{\zeta, d\mu\}$). Note that the two frames are related by
\[\left(
    \begin{array}{c}
      \zeta \\
      d\mu \\
    \end{array}
  \right)=\left(
            \begin{array}{cc}
              \textup{I} & F/2 \\
              0 & \textup{I} \\
            \end{array}
          \right)\left(
                   \begin{array}{c}
                     \zeta^+ \\
                     d\mu \\
                   \end{array}
                 \right)
\]
and that the matrix form of $\bar{g}+\bar{b}$ in the frame $\{\zeta, d\mu\}$ is
\[\bar{g}+\bar{b}\sim \left(
                                                \begin{array}{cc}
                                                  \phi^{-1} & 0 \\
                                                  0 & \phi \\
                                                \end{array}
                                              \right).\]
That's to say, as a tensor
\begin{eqnarray*}\bar{g}+\bar{b}&=&\left(
                                     \begin{array}{cc}
                                       \zeta^T & d\mu^T \\
                                     \end{array}
                                   \right)
\otimes \left(
                                                \begin{array}{cc}
                                                  \phi^{-1} & 0 \\
                                                  0 & \phi \\
                                                \end{array}
                                              \right)\left(
                                                       \begin{array}{c}
                                                         \zeta \\
                                                         d\mu \\
                                                       \end{array}
                                                     \right)
                                              \\
&=&\left(  \begin{array}{cc}
                                       \zeta^{+T} & d\mu^T \\
                                     \end{array}
                                   \right)\otimes\left(
                                                   \begin{array}{cc}
                                                     \textup{I} & 0 \\
                                                     -F/2 & \textup{I} \\
                                                   \end{array}
                                                 \right)\left(
                                                \begin{array}{cc}
                                                  \phi^{-1} & 0 \\
                                                  0 & \phi \\
                                                \end{array}
                                              \right)\left(
                                                   \begin{array}{cc}
                                                     \textup{I} & F/2 \\
                                                     0 & \textup{I} \\
                                                   \end{array}
                                                 \right)\left(
                                                       \begin{array}{c}
                                                         \zeta \\
                                                         d\mu \\
                                                       \end{array}
                                                     \right)\\
                                                     &=&\left(
                                     \begin{array}{cc}
                                       \zeta^{+T} & d\mu^T \\
                                     \end{array}
                                   \right)\otimes\left(
                                                   \begin{array}{cc}
                                                     \phi^{-1} & \phi^{-1}F/2 \\
                                                     -F\phi^{-1}/2 & \phi-F\phi^{-1}F/4 \\
                                                   \end{array}
                                                 \right)\left(
                                                       \begin{array}{c}
                                                         \zeta^+ \\
                                                         d\mu \\
                                                       \end{array}
                                                     \right).
                                   \end{eqnarray*}
Since $\zeta^+$ has no global meaning on $M$, we would like to replace it with $-\phi^TJ_+^*d\mu$ where $J_+^*d\mu$ is smooth on $M$. We obtain on $\mathring{M}$
\[\bar{g}+\bar{b}=\left(\begin{array}{cc}
                                       (J_+^*d\mu)^T & d\mu^T \\
                                     \end{array}
                                   \right)\otimes\left(
                                                   \begin{array}{cc}
                                                     \phi^{T} & -F/2 \\
                                                     F\phi^{-1}\phi^T/2 & \phi-F\phi^{-1}F/4 \\
                                                   \end{array}
                                                 \right)\left(
                                                       \begin{array}{c}
                                                        J_+^*d\mu\\
                                                        d\mu \\
                                                      \end{array}
                                                    \right).\]
Similarly, we have
\[g+b=\left(\begin{array}{cc}
                                       (J_+^*d\mu)^T & d\mu^T \\
                                     \end{array}
                                   \right)\otimes\left(
                                                   \begin{array}{cc}
                                                     \phi^T & -F \\
                                                     0 & \phi \\
                                                   \end{array}
                                                 \right)\left(
                                                       \begin{array}{c}
                                                        J_+^*d\mu\\
                                                        d\mu \\
                                                      \end{array}
                                                    \right).
                                   \]
Therefore, to see $\bar{g}+\bar{b}$ is smooth on $M$, we only need to check both $\phi^{-1}\phi^T$ and $\phi^{-1}$ can be extended smoothly to $M$. The conclusion for $\phi^{-1}$ is obvious because on $\mathring{M}$
 \[-\Omega(J_+\partial_{\theta_i^+}, \partial_{\theta_j^+})=\Omega(\sum_k(\phi^{-1})^{ki}\partial_{\mu_k}, \partial_{\theta_j^+})=(\phi^{-1})^{ji},\]
 and $\Omega(J_+\partial_{\theta_i^+}, \partial_{\theta_j^+})$ is smooth on $M$. Note that
 \[\phi^{-1}\phi^T=\phi^{-1}(\phi-2\phi_a)=\textup{I}-2\phi^{-1}\phi_a.\]
 So $\phi^{-1}\phi^T$ is as well smooth on $M$.

By Eq.~(\ref{inver}), in terms of $\{J_+^*d\mu, d\mu\}$ and $\{\partial_{\theta^+}, J_+\partial_{\theta^+}\}$, the tensor $[(J_++J_-)/2]^{-1}$ has the following form
\[\left(\begin{array}{cc}
                                       (J_+^*d\mu)^T & d\mu^T \\
                                     \end{array}
                                   \right)\otimes\left(
                          \begin{array}{cc}
                            -\phi\Xi^{-1}F(\phi_s)^{-1}\phi/2 & \phi\Xi^{-1}\phi \\
                            (-\phi^T+AF\Xi^{-1}F/2)(\phi_s)^{-1}\phi & -AF\Xi^{-1}\phi \\
                          \end{array}
                        \right)\left(
                                 \begin{array}{c}
                                   \partial_{\theta^+} \\
                                   J_+\partial_{\theta^+} \\
                                 \end{array}
                               \right)
                      \]
where $A=\phi^T(\phi_s)^{-1}/2-\textup{I}$. Similarly, the tensor $[(I_++I_-)/2]^{-1}$ has the following form:
 \[\left(\begin{array}{cc}
                                       (J_+^*d\mu)^T & d\mu^T \\
                                     \end{array}
                                   \right)\otimes\left(
                           \begin{array}{cc}
                             -\phi(\phi_s)^{-1}F/2 &\phi(\phi_s)^{-1}\phi  \\
                             -F(\phi_s)^{-1}F/4-\phi^T(\phi_s)^{-1}\phi & F(\phi_s)^{-1}\phi/2 \\
                           \end{array}
                         \right)\left(
                                 \begin{array}{c}
                                   \partial_{\theta^+} \\
                                   J_+\partial_{\theta^+} \\
                                 \end{array}
                               \right).
 \]
 Thus to prove that $[(I_++I_-)/2]^{-1}$ is globally well-defined on $M$, we must justify the following statements:\\
\textbf{\emph{ i}}) $\phi\Xi^{-1}F(\phi_s)^{-1}\phi-\phi(\phi_s)^{-1}F$ is smooth on $M$;\\
\textbf{\emph{ ii}}) $\phi(\Xi^{-1}-(\phi_s)^{-1})\phi$ is smooth on $M$;\\
\textbf{\emph{ iii}}) $AF\Xi^{-1}F(\phi_s)^{-1}\phi+1/2F(\phi_s)^{-1}F$ is smooth on $M$;\\
\textbf{\emph{ iv}}) $1/2F(\phi_s)^{-1}\phi+AF\Xi^{-1}\phi$ is smooth on $M$.

With Lemma~\ref{Xi} in mind, a careful analysis reveals that one only needs to check that the following matrix-valued functions
\[(\phi_s)^{-1},\quad \phi_s\Xi^{-1}\phi_s-\phi_s,\quad \Xi^{-1}\phi_s\]
are smooth on $M$. Note that
\begin{eqnarray*}\phi_s\Xi^{-1}\phi_s-\phi_s&=&[\Xi-1/4F(\phi_s)^{-1}F]\Xi^{-1}\phi_s-\phi_s\\
&=&\phi_s-1/4F(\phi_s)^{-1}F\Xi^{-1}\phi_s-\phi_s\\
&=&-1/4F(\phi_s)^{-1}F\Xi^{-1}\phi_s
\end{eqnarray*}
and
\[\Xi^{-1}\phi_s=\Xi^{-1}[\Xi-1/4F(\phi_s)^{-1}F]=\textup{I}-1/4\Xi^{-1}F(\phi_s)^{-1}F.\]
So to prove the theorem we only have to prove that $(\phi_s)^{-1}$ is smooth on $M$.\\
\emph{\textbf{Claim}}. $(\phi_s)^{-1}$ admits a smooth extension to $M$.
\begin{proof}
Let us compute the seemingly irrelevant quantity $((J_++J_-)^{-1}\partial_{\theta^+_i},J_+^*d\mu_j)$ first. From Eq.~(\ref{inver}), we have
\begin{eqnarray*}
2((J_++J_-)^{-1}\partial_{\theta_i^+},J_+^*d\mu_j)&=&-\sum_{k,l}[\Xi^{-1}F(\phi_s)^{-1}\phi]_{ki}(\phi^{-1})^{jl}(\partial_{\theta^+_k},\zeta^+_l)\\
&=&-[\Xi^{-1}F(\phi_s)^{-1}]_{ji}.
\end{eqnarray*}
Since $((J_++J_-)^{-1}\partial_{\theta_i^+},J_+^*d\mu_j)$ is a globally defined smooth function on $M$, we know that $\Xi^{-1}F(\phi_s)^{-1}$ is smooth on $M$. Additionally, we have
\[\textup{I}=\Xi^{-1}(\phi_s+1/4F(\phi_s)^{-1}F)=\Xi^{-1}\phi_s+1/4\Xi^{-1}F(\phi_s)^{-1}F\]
and consequently $\Xi^{-1}\phi_s$ is smoothly defined on $M$. Note that $\phi_s\geq \Xi$ on $\mathring{M}$ in the sense that their difference $-1/4F(\phi_s)^{-1}F$ is nonnegative-definite on $\mathring{M}$. Consequently we have the reversed inequality $(\phi_s)^{-1}\leq \Xi^{-1}$ on $\mathring{M}$, and
\[\det(\Xi^{-1}\phi_s)=\det{\Xi^{-1}}\times\det{\phi_s}\geq 1\]
on $\mathring{M}$. By continuity, $\det{(\Xi^{-1}\phi_s)}\geq 1$ on the whole of $M$, implying that $\Xi^{-1}\phi_s$ is both smooth and invertible on $M$. Therefore, $(\phi_s)^{-1}\Xi$ is also smooth on $M$. The factorization
\[(\phi_s)^{-1}=[(\phi_s)^{-1}\Xi]\times\Xi^{-1}\]
 then implies that $(\phi_s)^{-1}$ is also smooth on $M$.
\end{proof}
By Lemma~\ref{c1}, we thus have finally proved that $I_+\in DGK_\Omega^{\mathbb{T}}(M)$.
\end{proof}
\begin{corollary}\label{cano}The complex structure $I_0$ defined in Eq.~(\ref{aver}) admits a smooth extension on $M$. More precisely, $I_0\in K_\Omega^\mathbb{T}(M)$ and $\tau$ is actually the symplectic potential of this toric K$\ddot{a}$hler structure in Abreu-Guillemin theory.
\end{corollary}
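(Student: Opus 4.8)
The plan is to read the corollary off from Theorem~\ref{M1} together with the structure theory of anti-diagonal toric GK structures in \cite{Wang2}. By Theorem~\ref{M1}, the complex structure $I_+$ of Eq.~(\ref{nc}) extends from $\mathring M$ to a genuine element of $DGK_\Omega^{\mathbb{T}}(M)$; its biHermitian partner is the symplectic adjoint $I_-=I_+^{\Omega}=-\Omega^{-1}I_+^*\Omega$, an algebraic operation in $\Omega$ and $I_+$ alone, so $I_-$ is smooth on $M$ as well, and both are integrable and $\Omega$-compatible there by continuity from $\mathring M$. Thus $(I_+,I_-)$ is a bona fide anti-diagonal toric GK structure of symplectic type on $M$, with the Darboux coordinates $(\theta,\mu)$ attached to the Darboux-type connection $\zeta$ as admissible coordinates and with defining matrix $\phi=\phi_s+C$, and $I_0$ of Eq.~(\ref{aver}) is, by construction, its average complex structure. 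Hence \cite[Thm.~4.4, 4.5]{Wang2} --- now applicable over all of $M$, since the hypothesis $I_+\in DGK_\Omega^{\mathbb{T}}(M)$ (rather than just on $\mathring M$) is supplied by Theorem~\ref{M1} --- yields at once that $I_0\in K_\Omega^{\mathbb{T}}(M)$ and that $\tau$, whose Hessian is $\phi_s$, is its symplectic potential.

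If one prefers to see the global extendibility of $I_0$ directly, it suffices to repeat the bookkeeping in the proof of Theorem~\ref{M1}, now with the K$\ddot{a}$hler metric $g_0:=-\Omega I_0$ in place of the tensors $\bar g,\bar b,((I_++I_-)/2)^{-1}$ treated there. In the (non-global) frame $\{\zeta,d\mu\}$ one has $g_0\sim\mathrm{diag}((\phi_s)^{-1},\phi_s)$, whereas the already-extended biHermitian metric $\bar g$ of $(I_+,I_-)$ is $\sim\mathrm{diag}((\phi^{-1})_s,\phi_s)$, so $g_0-\bar g$ is supported in the top-left block, where it equals $(\phi_s)^{-1}-(\phi^{-1})_s$. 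Substituting $\zeta=-\phi^{T}J_+^*d\mu+\tfrac12 F\,d\mu$ re-expresses $g_0-\bar g$ through the globally defined $1$-forms $J_+^*d\mu$ and $d\mu$, with coefficient matrices that are, up to the constant matrices $C$ and $F$, polynomial in $(\phi_s)^{-1}$, $\phi^{-1}$ and $(\phi^{-1})^{T}$: the a priori boundary-singular matrices $\phi,\phi^{T},\phi_s$ enter only through regular combinations such as $(\phi_s)^{-1}\phi^{T}=\textup{I}-(\phi_s)^{-1}C$ and $(\phi^{-1})^{T}\phi^{T}=\textup{I}$. Since $(\phi_s)^{-1}$ extends smoothly to $M$ by the Claim in the proof of Theorem~\ref{M1}, $\phi^{-1}$ by the explicit identity there, and $(\phi^{-1})^{T}$ as its transpose, $g_0-\bar g$ --- hence $g_0=\bar g+(g_0-\bar g)$ --- is smooth on $M$, so $I_0=-\Omega^{-1}g_0$ is a smooth $\mathbb{T}$-invariant tensor on $M$. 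By continuity from $\mathring M$, $I_0^2=-\textup{I}$, the Nijenhuis tensor of $I_0$ vanishes, and $\Omega$ is $I_0$-invariant on all of $M$; and $g_0$, being non-degenerate ($\Omega$ and $I_0$ being invertible) and non-negative by continuity, is positive-definite. Thus $I_0\in K_\Omega^{\mathbb{T}}(M)$.

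Granting the extension, the identification of $\tau$ as the symplectic potential is then immediate from the Abreu--Guillemin dictionary of \S~\ref{AG}: as $\zeta$ is a flat connection of Darboux type, the functions $\theta_j$ with $\zeta_j=d\theta_j$ together with $\mu_j$ are action--angle (Darboux) coordinates for $\Omega$, and in these coordinates the matrix of $I_0^*$ is precisely the anti-diagonal one built from $\phi_s$ and $(\phi_s)^{-1}$ of Eq.~(\ref{aver}); by the definition of the symplectic potential, this says exactly that $\phi_s$ is the Hessian of the symplectic potential of $I_0$, i.e. that potential is $\tau$. The only substantive work is the global smoothness of $I_0$, and, as both routes above show, that work has essentially already been done inside Theorem~\ref{M1}; the main point to watch is the cancellation of the boundary-singular $\phi_s$-terms when $g_0$ is compared with the reference tensor $\bar g$ (equivalently, in the first route, the verification that \cite{Wang2}'s argument goes through verbatim over $M$ and not merely $\mathring M$).
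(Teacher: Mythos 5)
Your first route is exactly the paper's proof: Theorem~\ref{M1} promotes $I_+$ to an element of $DGK_\Omega^{\mathbb{T}}(M)$, and the corollary is then read off from the anti-diagonal structure theory of \cite{Wang2} applied to its average complex structure (the paper cites the global statement \cite[Thm.~4.9]{Wang2} rather than Thm.~4.4/4.5, which as quoted in \S~\ref{Log} only give the conclusion on $\mathring{M}$, so that is the cleaner reference for the extension to $M$). Your second, self-contained verification --- comparing $g_0=-\Omega I_0$ with the already-extended $\bar g$ and using the smoothness of $(\phi_s)^{-1}$, $\phi^{-1}$ together with the cancellation $\phi(\phi^{-1})_s\phi^T=\phi_s$ --- is correct but not needed once the reduction to the anti-diagonal case is in hand.
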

\begin{proof}By Thm.~\ref{M1} $I_+$ is actually an anti-diagonal toric GK structure of symplectic type on $M$. The corollary then follows immediately from \cite[Thm.~4.9]{Wang2} because $I_0$ is the average complex structure of $I_+$ and $I_-$.
\end{proof}
\emph{Remark}. Due to the above results, we now have another convenient description of an element $J_+\in GK_\Omega^\mathbb{T}(M)$: It is characterized by the triple $(\tau, C, F)$ where $\tau$ is the symplectic potential of a toric K$\ddot{a}$hler structure $I_0$ and $C, F$ are two $n\times n$ constant anti-symmetric matrices such that the condition (\ref{posi}) is satisfied.
\begin{corollary}The almost complex structure $J_0$ defined in \S~\ref{Log} (see Eq.~(\ref{comp})) admits a smooth extension to $M$; in particular, $J_0$ is compatible with $\Omega$, i.e. $J_0$ is a toric almost K$\ddot{a}$hler structure.
\end{corollary}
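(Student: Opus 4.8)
The plan is to follow, step for step, the method behind Thm.~\ref{M1} and Cor.~\ref{cano}. On $\mathring M$ the tensor $J_0$ is given by the explicit matrix (\ref{comp}), built out of $\phi$, $\phi_s$, $\Xi=\phi_s+\tfrac14F(\phi_s)^{-1}F$, $F$ and $C$. By Lemma~\ref{Xi}, by the Claim inside the proof of Thm.~\ref{M1}, and by Cor.~\ref{cano}, the matrix-valued functions $(\phi_s)^{-1}$, $\phi^{-1}$, $\Xi^{-1}$, $\Xi^{-1}F(\phi_s)^{-1}$ and $\Xi^{-1}\phi_s$ all extend smoothly to $M$, and hence so do $\phi(\phi_s)^{-1}=\mathrm I+C(\phi_s)^{-1}$, $\Xi^{-1}\phi=\Xi^{-1}\phi_s+\Xi^{-1}C$ and $F(\phi_s)^{-1}F$ (the last because $(\phi_s)^{-1}$ does). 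So the only real task is to organize (\ref{comp}) so that these are the only quantities that occur.

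First I would pass from the frame $\{\partial_{\theta^+},\partial_\mu\}$ of (\ref{comp}), which has no meaning off $\mathring M$, to one that does: the vector fields $X_j=\partial_{\theta^+_j}$ and $J_+X_j$ and the $1$-forms $d\mu_j$, $J_+^*d\mu_j$ are smooth on all of $M$ (the reference GK structure $J_+$ being so), and, via the substitutions $\partial_\mu=-\phi\,J_+X$ and $\zeta^+=-\phi^TJ_+^*d\mu$, one rewrites $J_0$ as a $(1,1)$-tensor in $\{X_j,J_+X_j\}$ and $\{J_+^*d\mu_j,d\mu_j\}$, exactly as was done for $[(I_++I_-)/2]^{-1}$ in the proof of Thm.~\ref{M1}. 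The individual entries so produced still contain the (near $\partial\Delta$ unbounded) matrices $\phi$, $\phi_s$, $\Xi$; but, precisely as there, one does not need each entry to be smooth --- it is enough (Lemma~\ref{c1}-style) to compare $J_0$ with the \emph{smooth} reference $I_0$ of (\ref{aver}), which is exactly $J_0$ in the case $F=0$, and to check that $J_0-I_0$, written in the common frame, extends smoothly: in the difference every occurrence of $\phi$, $\phi_s$, $\Xi$ telescopes (via $\Xi^{-1}\phi_s=\mathrm I-\tfrac14\Xi^{-1}F(\phi_s)^{-1}F$, its transpose, and $\phi_s\Xi^{-1}\phi_s-\phi_s=-\tfrac14F(\phi_s)^{-1}F\Xi^{-1}\phi_s$, just as in the proof of Thm.~\ref{M1}) into the functions listed above. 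Hence $J_0$ extends to a smooth $\mathbb T$-invariant $(1,1)$-tensor on $M$, and since $J_0^2=-\mathrm I$ on the dense open $\mathring M$ it holds on $M$ by continuity; so $J_0$ is an almost complex structure on $M$.

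It remains to see $J_0$ is compatible with $\Omega$. On $\mathring M$ this is checked directly from (\ref{comp}) and from $\Omega=\sum_jd\mu_j\wedge\zeta^+_j+\tfrac12\sum_{j,k}F_{kj}\,d\mu_j\wedge d\mu_k$: one finds that $g_0:=\Omega(\cdot,J_0\cdot)$ is symmetric, $J_0$-invariant and positive-definite (for instance $g_0(X_i,X_j)=(\Xi^{-1})_{ij}$), i.e.\ $J_0$ is $\Omega$-compatible there. Since $\Omega$ and $J_0$ are by now known to be smooth on all of $M$, the identities defining compatibility persist on $M$ by continuity (equivalently: $g_0$ is assembled from the same smooth blocks, extends to a smooth metric on $M$, and $J_0=g_0^{-1}\Omega$ with $\Omega$ non-degenerate). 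Therefore $(M,\Omega,J_0,g_0)$ is a toric almost K$\ddot{a}$hler structure, as claimed.

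The one genuine obstacle is the telescoping in the second paragraph: verifying that in $J_0-I_0$ (and in $g_0$) \emph{all} the a priori singular combinations --- $\phi\phi^T$, $\phi_s\Xi^{-1}\phi_s$, $\Xi^{-1}\phi_s^2$, and the like --- cancel against one another and leave only polynomials in the finitely many matrix-valued functions already known to extend smoothly to $M$. This is the same delicate cancellation that occupied the closing step of the proof of Thm.~\ref{M1} and its Claim; everything else here is a continuity argument.
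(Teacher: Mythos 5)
Your proposal is correct and, for the smoothness statement, it is essentially the paper's own argument: rewrite $J_0$ as a tensor in the globally defined frames $\{J_+^*d\mu, d\mu\}$ and $\{\partial_{\theta^+}, J_+\partial_{\theta^+}\}$, compare with a smooth reference, and reduce everything to the matrix blocks already shown to extend smoothly in Lemma~\ref{lemm} and the proof of Thm.~\ref{M1} ($\Xi^{-1}$, $(\phi_s)^{-1}$, $\Xi^{-1}\phi_s$, $\Xi^{-1}F(\phi_s)^{-1}$, $\phi^{-1}$), using the same telescoping identities; the only cosmetic difference is that you compare $J_0$ with $I_0$ while the paper compares it with $J_+$, which changes nothing of substance since both references are smooth on $M$ and Cor.~\ref{cano} is already available. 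Where you genuinely diverge is the compatibility with $\Omega$: you verify directly on $\mathring{M}$ that $g_0=\Omega(\cdot,J_0\cdot)$ is symmetric and positive-definite (your sample computation $g_0(X_i,X_j)=(\Xi^{-1})_{ij}$ is right --- it follows from the matrix of $g$ in the frame $\{\zeta^+,d\mu\}$ together with Fact III of the appendix), and then extend by continuity, with nondegeneracy at the boundary forced by the invertibility of $\Omega$ and $J_0$; the paper instead obtains compatibility conceptually, by identifying $T\mathring{M}$ with $\mathbb{K}\oplus\mathbb{J}_1\mathbb{K}$ and recognizing $J_0$ and a Hermitian metric $\tilde{g}$ as the restrictions of $-\mathbb{J}_1$ and $\mathcal{G}$, so that $\Omega=\tilde{g}J_0$ on $\mathring{M}$, then using the same continuity step. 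Your route is more computational and self-contained; the paper's explains why $J_0$ is a natural object attached to $\mathbb{J}_1$ and its generalized metric. Either way the final continuity argument is identical, so the proposal stands.
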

\begin{proof}
The proof goes in the same spirit as the proof of Thm.~\ref{M1}. $J_0$ as a tensor has the following form:
\[\left(\begin{array}{cc}
                                       (J_+^*d\mu)^T & d\mu^T \\
                                     \end{array}
                                   \right)\otimes\left(
            \begin{array}{cc}
              \frac{1}{2}\phi\Xi^{-1}F(\phi_s)^{-1}\phi & -\phi\Xi^{-1}\phi \\
              \Xi[\textup{I}+(\frac{1}{2}\Xi^{-1}F(\phi_s)^{-1}\phi)^2] & -\frac{1}{2}F(\phi_s)^{-1}\phi\Xi^{-1}\phi \\
            \end{array}
          \right)\left(
                                 \begin{array}{c}
                                   \partial_{\theta^+} \\
                                   J_+\partial_{\theta^+} \\
                                 \end{array}
                               \right).\]
Similarly, $J_+$ as a tensor has the form:
\[\left(\begin{array}{cc}
                                       (J_+^*d\mu)^T & d\mu^T \\
                                     \end{array}
                                   \right)\otimes\left(
                                                   \begin{array}{cc}
                                                     0 & -\phi \\
                                                     \phi & 0 \\
                                                   \end{array}
                                                 \right)\left(
                                 \begin{array}{c}
                                   \partial_{\theta^+} \\
                                   J_+\partial_{\theta^+} \\
                                 \end{array}
                               \right).
                                   \]
Thus to see $J_0$ is globally defined on $M$, we have to prove the matrix-valued functions $\phi\Xi^{-1}F(\phi_s)^{-1}\phi$, $\phi\Xi^{-1}\phi-\phi$, $F(\phi_s)^{-1}\phi\Xi^{-1}\phi$ and $\Xi[\textup{I}+(\frac{1}{2}\Xi^{-1}F(\phi_s)^{-1}\phi)^2]-\phi$ can all be extended smoothly to $M$. In Lemma~\ref{lemm} and the proof of Thm.~\ref{M1}, we have already established the global smoothness of $\Xi^{-1}$, $\Xi$ $(\phi_s)^{-1}$ and $\Xi^{-1}\phi_s$, which is sufficient for establishing the global smoothness of $J_0$.

To see $J_0$ is compatible with $\Omega$, it's better to note that there is a natural $J_0$-Hermitian metric on $\mathring{M}$ defined by the GK structure on $M$. By identifying $T\mathring{M}$ with $\mathbb{K}\oplus \mathbb{J}_1\mathbb{K}$ where $\mathbb{K}$ is the subbundle of $T\mathring{M}\oplus T^*\mathring{M}$ generated by $X_j-b(X_j)$, the restriction of $-\mathbb{J}_1$ and the generalized metric $\mathcal{G}=-\mathbb{J}_1\mathbb{J}_2$ on $\mathbb{K}\oplus \mathbb{J}_1\mathbb{K}$ then gives rise to the almost complex structure $J_0$ and a Hermitian metric $\tilde{g}$ (then that $\tilde{g}$ is $J_0$-Hermitian is equivalent to that $\mathcal{G}$ is compatible with $\mathbb{J}_1$ ). One can easily check that on $\mathring{M}$, $\Omega=\tilde{g}J_0$. The detailed computation involved here is in essence the same as that appeared in the proof of \cite[Thm.~4.4]{Wang2} and thus omitted. By continuity, the conclusion can be finally established.
\end{proof}

 We have noted that for $\mathbb{J}_1\in GK_\Omega^{\mathbb{T}}(M)$, points in $\mathring{M}$ are all regular and on the other side fixed points are all of complex type. To conclude this section and also for completeness, let us have a very brief look at those points in $M\backslash\mathring{M}$.

 Let $P$ be an open face of codimension $k$ of $\Delta$, defined in $(\mathbb{R}^n)^*$ by
\[(u_{j_l}, \mu)=\lambda_{j_l},\quad l=1,2,\cdots, k,\]
and $V_P$ the linear subspace of $(\mathbb{R}^n)^*$ singled out by $(u_{j_l},\mu)=0$, $l=1,2,\cdots, k$. These $u_{j_l}\in \mathfrak{t}$ generate a subtorus $T_{0P}$ acting trivially on $M_P=\mu^{-1}(\bar{P})$, where $\bar{P}$ is the closure of $P$. Let $T_P$ be the quotient of $\mathbb{T}$ by $T_{0P}$. Note that intrinsically $C$ and $F$ are actually skew-symmetric bilinear functions on $\mathfrak{t}^*$ or in other words elements in $\wedge^2 \mathfrak{t}$. Let $c_P$, $f_P$ be the restriction of $c=1/2\sum_{j,k}C_{kj}e_j\wedge e_k$ and $f=1/2\sum_{j,k}F_{kj}e_j\wedge e_k$ on $V_P$ respectively.
\begin{theorem}Let $P$ be an open face of codimension $k$ of $\Delta$ as above. Then $M_P$ is a GK submanifold of $M$ for $\mathbb{J}_1\in GK_\Omega^{\mathbb{T}}(M)$. More precisely, its GK structure $(\mathbb{J}_{1P}, \mathbb{J}_{2P})$ belongs to $GK_{\Omega_P}^{\mathbb{T}_P}(M_P)$, where $\Omega_P$ is the restriction of $\Omega$ on $M_P$. $M_P$ inherits a toric K$\ddot{a}$hler structure from the canonical K$\ddot{a}$hler structure on $M$, which together with $c_P$ and $f_P$ characterizes the GK structure on $M_P$.
\end{theorem}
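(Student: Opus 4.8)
The plan is to exploit the already-established structure theory over $\mathring{M}$ and to restrict everything face by face, checking that the ``triple'' characterization of Theorem~\ref{GK} and Corollary~\ref{cano} is compatible with passing to $M_P$. First I would observe that $M_P=\mu^{-1}(\bar P)$ is a compact symplectic submanifold of $(M,\Omega)$: it is the fixed point set of the subtorus $T_{0P}$, hence a symplectic submanifold, and the residual torus $T_P=\mathbb{T}/T_{0P}$ acts on it effectively and in a Hamiltonian fashion with moment map the composite $M_P\xrightarrow{\mu}\bar P\hookrightarrow V_P\cong \mathfrak t_P^*$; its moment polytope is $\bar P$ itself, now of full dimension $n-k$ in $V_P$. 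So $(M_P,\Omega_P,\mathbb{T}_P,\mu_P)$ is a compact toric symplectic manifold in its own right, and the interior $\mathring{M}_P=\mu^{-1}(P)$ is a trivial principal $\mathbb{T}_P$-bundle over $P$.

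Next I would show the GK structure restricts. The symplectic leaves of $\beta_1$ through points of $M_P$ lie inside $M_P$: since $M_P$ is $\mathbb{T}$-invariant and (by the Remark after Prop.~\ref{reg}) $\mathbb{J}_1$ is of complex type at the fixed points in $M_P$ while the Poisson bivector $\beta_1=-\tfrac12(J_+-J_-)g^{-1}$ is built from $\mathbb{T}$-invariant tensors, the distribution $\mathcal D_1$ generated by $X_j, Y_j=-g^{-1}d\mu_j$ is tangent to $M_P$ wherever the corresponding $X_j$ is (and $Y_j$ is tangent because $d\mu_j$ annihilates $T^*M_P$-normal directions coming from the moment map). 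A cleaner route: it is known that a GK manifold of symplectic type restricts to a GK manifold of symplectic type on any $J_\pm$-complex, $\Omega$-symplectic submanifold, because $J_+$ is tamed by $\Omega$ and its symplectic adjoint $J_+^\Omega=J_-$ is integrable, and these properties pass to the restrictions $J_{\pm P}$ once we know $M_P$ is $J_\pm$-invariant — which it is, since over $\mathring{M}$ the complex structures $J_\pm$ are block upper/lower triangular in the frame $\{\zeta^\pm,d\mu\}$ adapted to the fibration, so they preserve the vertical distribution of $\mu$ and hence (by continuity, using the smooth extension from \S~\ref{Com}) preserve $M_P$. Therefore $J_{+P}$ is tamed by $\Omega_P$ with integrable symplectic adjoint $J_{+P}^{\Omega_P}=J_{-P}$, and by \cite{En} this is exactly an element of $GK_{\Omega_P}^{\mathbb{T}_P}(M_P)$.

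Then I would identify the data. Applying Theorem~\ref{GK}/Corollary~\ref{cano} to $M_P$ gives a triple $(\tau_P, C_P, F_P)$: the symplectic potential $\tau_P$ of the canonical toric Kähler structure $I_{0P}$ underlying $(\mathbb{J}_{1P},\mathbb{J}_{2P})$, together with two constant antisymmetric matrices. I claim $\tau_P$ is the restriction to $P$ of $\tau$ (equivalently, $I_{0P}$ is the restriction of the canonical toric Kähler $I_0$ from Cor.~\ref{cano}, which is immediate once $I_0$ is shown to preserve $M_P$ — same triangularity/continuity argument as for $J_\pm$), and $C_P=c_P$, $F_P=f_P$ under the identification of constant antisymmetric matrices with elements of $\wedge^2\mathfrak t$ and the quotient map $\wedge^2\mathfrak t\to\wedge^2\mathfrak t_P$ dual to $V_P\hookrightarrow\mathfrak t^*$. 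Concretely, writing the defining relations $du_j^\pm=-\sum_k\phi_{jk}d\mu_k$, $\zeta^\pm=\zeta\mp\tfrac12 F\,d\mu$ in a basis of $\mathfrak t$ adapted to the splitting $\mathfrak t=\mathfrak t_{0P}\oplus(\text{complement})$ and pulling back to $M_P$ kills the $d\mu_{j_l}$ ($l\le k$) directions, so the surviving blocks of $\phi_s$, $C$, $F$ are exactly the Hessian of $\tau|_P$, the matrix of $c_P$, and the matrix of $f_P$; and the positive-definiteness condition \eqref{posi} for $M_P$ is the restriction of the one for $M$, hence automatic. Finally, by the converse direction of Theorem~\ref{cons}, the triple $(\tau|_P, c_P, f_P)$ reconstructs $(\mathbb{J}_{1P},\mathbb{J}_{2P})$, which is the asserted characterization.

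The main obstacle, and the step I would spend the most care on, is the invariance claim: that the canonical Kähler structure $I_0$ and the complex structures $J_\pm$ — all originally produced only on $\mathring M$ in frames adapted to the principal bundle $\mu:\mathring M\to\mathring\Delta$ — genuinely preserve the boundary stratum $M_P$ after their smooth extension to $M$. Over $\mathring M_P$ this is the triangularity observation, but at points of $M_P\setminus\mathring M_P$ (lower faces of $P$) one must argue by continuity and perhaps by a local normal-form computation near such a point, using that $M_P$ is cut out by the vanishing of the moment-map components $\mu_{j_l}-\lambda_{j_l}$ and that in Delzant-type local coordinates the extended tensors have the standard toric asymptotic behavior guaranteed by Cor.~\ref{cano}. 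Once invariance is in hand, everything else is bookkeeping with the $(\tau,C,F)$-dictionary.
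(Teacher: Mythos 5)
Your overall strategy (restrict the structures to $M_P$, then match the triple $(\tau,C,F)$ with $(\tau|_P,c_P,f_P)$) is in the same spirit as the paper, but the step you yourself single out as the crux --- invariance of $M_P$ under $J_\pm$ and $I_0$ --- is argued incorrectly, and as written it would fail. In the frame $\{\zeta^\pm,d\mu\}$ the matrices of $J_\pm^*$ are \emph{anti-diagonal}, not triangular: $J_+^*\zeta^+=\phi^T d\mu$ and $J_+^*d\mu=-(\phi^{-1})^T\zeta^+$, so $J_+X_j$ has nonzero $d\mu$-components and $J_\pm$ emphatically do \emph{not} preserve the vertical distribution $\mathcal D$. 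Moreover $M_P=\mu^{-1}(\bar P)$ lies entirely in $M\setminus\mathring M$, so no frame computation on $\mathring M$ followed by ``continuity'' can establish $J_\pm(TM_P)\subset TM_P$; the vertical distribution has rank $n$ while $TM_P$ has rank $2(n-k)$, so it is not even the correct subbundle to track toward the boundary. The repair is the standard argument the paper invokes: after the compactification results of \S~5 all of $J_\pm$, $I_0$ and $g$ are $\mathbb{T}$-invariant tensors on the whole of $M$, and $M_P$ is (a component of) the fixed-point set of the subtorus $T_{0P}$, which therefore acts by holomorphic isometries for each of these complex structures; the fixed-point set of a compact group of holomorphic isometries is a complex submanifold, so $M_P$ is $J_+$-, $J_-$- and $I_0$-invariant, and then the result that a $J_\pm$-invariant submanifold of a GK manifold is a GK submanifold (Vaisman) gives $(\mathbb{J}_{1P},\mathbb{J}_{2P})\in GK_{\Omega_P}^{\mathbb{T}_P}(M_P)$. (Your fixed-point-set observation at the start is exactly the right tool --- you just did not apply it to the complex structures.)

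On the identification of the data, your blockwise bookkeeping with an adapted basis of $\mathfrak t$ is plausible but hides a boundary-limit issue: the ``surviving block'' of $\phi_s$, $C$, $F$ must be shown to extend to the face and to agree there with the Hessian of $\tau|_P$, $c_P$, $f_P$, which is exactly the kind of asymptotic analysis one would like to avoid. The paper sidesteps this by viewing $C$ and $F$ as $I_0$-holomorphic Poisson structures on $M$, noting $M_P$ is a Poisson submanifold for both (so the restrictions are $c_P$, $f_P$), and then using the global formula (\ref{tran}), $J_+^{*}=\mathcal{F}_1^*I_+^*(\mathcal{F}_1^*)^{-1}$, to reduce the characterization on $M_P$ to the already-settled anti-diagonal case of \cite{Wang2}; if you keep your route you must supply the boundary regularity of the restricted blocks rather than assert it.
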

\begin{proof}Recall that $M_P$ is a GK submanifold of $M$ means that the pull-backs of the complex Dirac structures associated with $\mathbb{J}_1, \mathbb{J}_2$ to $M_P$ are themselves GC structures and form a GK structure on $M_P$. It is a rather standard argument to imply that $M_P$ is a complex submanifold relative to any one of the three complex structures $J_\pm$ and $I_0$. It is known that if a submanifold is both $J_+$- and $J_-$-invariant, then it is a GK submanifold (see for example \cite{Vai}). On the other side, the pull-back of $\mathbb{J}_2$ is of course of sympletic type with its symplectic form $\Omega_P$. These structures on $M_P$ are obviously $\mathbb{T}_P$-invariant and consequently the GK structure on $M_P$ lies in $GK_{\Omega_P}^{\mathbb{T}_P}(M_P)$.

Note that these matrices $C$, $F$ can be equivalently viewed as two canonical $I_0$-holomorphic Poisson structures on $M$ and $M_P$ is a Poisson submanifold relative to each of the two holomorphic Poisson structures. Obviously, the corresponding restricted holomorphic Poisson structures on $M_P$ are characterized by $c_P$ and $f_P$. To see these do characterize the toric GK structure on $M_P$, the most direct way is through the global formula (\ref{tran}) in \S~\ref{CON}, which shows how $J_+\in GK_\Omega^{\mathbb{T}}(M)$ arises from an element in $DGK_\Omega^{\mathbb{T}}(M)$. For the latter anti-diagonal case, the submanifold structure of $M_P$ has already been explored in \cite{Wang2}.
\end{proof}
\emph{Remark}. From the expression of $\beta_+$ we have derived in \S~\ref{Log}, we can find that on $\mu^{-1}(P)$ the type of $\mathbb{J}_1$ is $n-\textup{rk}(1/2f_P-\sqrt{-1}c_P)$ and the type of $\mathbb{J}_{1P}$ is $n-\textup{rk}(1/2f_P-\sqrt{-1}c_P)-k$.
\section{Symmetric toric GK structures of symplectic type}\label{symm}
 For $J_+\in GK_\Omega^{\mathbb{T}}(M)$, the way the underlying matrices $C$ and $F$ affect the GC structure $\mathbb{J}_1$ seems a bit complicated. In this section we specialize to the case where $C=0$ and $F\neq 0$. This is a case not touched at all in \cite{Bou}. We begin with an intrinsic characterization of this case. Recall that $\mathcal{D}$ is the distribution on $M$ generated by the infinitesimal action of $\mathfrak{t}$.
\begin{definition}If $J_+\in GK_\Omega^{\mathbb{T}}(M)$ satisfies the condition
\[(J_+-J_-)\mathcal{D}\subset \mathcal{D},\]
we call $J_+$ a symmetric toric GK structure of symplectic type on $M$.
\end{definition}
\begin{proposition}$J_+\in GK_\Omega^{\mathbb{T}}(M)$ is symmetric if and only if the underlying matrix $C$ in Thm.~\ref{GK} vanishes.
\end{proposition}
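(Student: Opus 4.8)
The plan is to reduce the statement to a short computation on the open dense set $\mathring{M}$, where all the explicit matrix forms of \S\ref{Log} are available, and then to transport the conclusion to the whole of $M$ using the smooth--extension results of \S\ref{Com}. First I would rephrase the defining condition in terms of the Hitchin Poisson structure $\beta_3$. Since $\beta_3=-\frac12(J_+-J_-)\Omega^{-1}$ and $\iota_{X_j}\Omega$ equals $d\mu_j$ up to sign on all of $M$, one gets the global identity $(J_+-J_-)X_j=\pm 2\,\beta_3^{\sharp}(d\mu_j)$, so that $(J_+-J_-)\mathcal{D}\subset\mathcal{D}$ is equivalent to $\beta_3^{\sharp}(d\mu_j)\in\mathcal{D}$ for every $j$. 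On $\mathring{M}$ I would read this off from the matrix form of $\beta_3$ relative to $\{\zeta^+,d\mu\}$ computed in the proof of Prop.~\ref{reg}: there the $\partial_{\theta^+_k}$-component of $\beta_3^{\sharp}(d\mu_j)$ (and the $\partial_{\theta^+_k}=X_k$ span $\mathcal{D}$) has coefficients coming from $F\phi^{-1}$, while its $\partial_{\mu_k}$-component (and the $\partial_{\mu_k}$ are transverse to $\mathcal{D}$) has coefficients that are, up to sign, the entries of $(\phi^{-1})_a$. Hence $\beta_3^{\sharp}(d\mu_j)\in\mathcal{D}$ on $\mathring{M}$ for all $j$ if and only if $(\phi^{-1})_a=0$, i.e. $\phi=\phi_s$ is symmetric, i.e. $C=\phi_a=0$. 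This already gives the ``only if'' direction: if $J_+$ is symmetric, then the condition holds in particular on the dense set $\mathring{M}$, forcing $C=0$.

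For the converse, assume $C=0$, so that $\phi=\phi_s$. Then on $\mathring{M}$ the computation above collapses to $\beta_3^{\sharp}(d\mu_j)=\frac12\sum_k\bigl(F(\phi_s)^{-1}\bigr)_{kj}X_k$. Here $F$ is a constant matrix and, by the Claim established inside the proof of Thm.~\ref{M1}, $(\phi_s)^{-1}$ extends smoothly to all of $M$; therefore $\frac12\sum_k(F(\phi_s)^{-1})_{kj}X_k$ is a globally smooth vector field on $M$ which agrees on the dense set $\mathring{M}$ with the globally smooth field $\beta_3^{\sharp}(d\mu_j)$, hence everywhere on $M$. Consequently $(J_+-J_-)X_j=\pm2\,\beta_3^{\sharp}(d\mu_j)$ lies in $\mathcal{D}$ at every point of $M$, so $J_+$ is symmetric. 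Since $\mathcal{D}$ is by definition generated by the $X_j$, this is exactly the claimed condition.

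I expect the only genuinely delicate point to be this passage from $\mathring{M}$ to $M$: because $\mathcal{D}$ is a singular distribution that drops rank along $M\setminus\mathring{M}$, one cannot simply invoke continuity of the inclusion $(J_+-J_-)\mathcal{D}\subset\mathcal{D}$ (zero of a wedge $(J_+-J_-)X_j\wedge X_1\wedge\cdots\wedge X_n$ carries no information where the $X_k$ are already dependent). The clean way around it is precisely to exhibit $(J_+-J_-)X_j$ as an explicit, globally smooth, $\mathcal{D}$-valued linear combination of the $X_k$, which is what the $M$-smoothness of $(\phi_s)^{-1}$ from \S\ref{Com} provides. Everything else is bookkeeping with the matrices $\phi=\phi_s+C$, $F$ already assembled in the statement and proof of Thm.~\ref{GK}.
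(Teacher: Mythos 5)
Your proposal is correct, and its core computation is the same as the paper's: the paper simply displays the matrix of $\tfrac12(J_+^*-J_-^*)$ in the frame $\{\zeta^+,d\mu\}$ on $\mathring{M}$ and reads off that $(J_+-J_-)\mathcal{D}\subset\mathcal{D}$ holds there exactly when $(\phi^{-1})_a\equiv 0$, i.e. $\phi$ is symmetric, i.e. $C=0$. Your reformulation through $\beta_3=-\tfrac12(J_+-J_-)\Omega^{-1}$ carries identical information, since $\Omega$ sends $X_j$ to $\pm d\mu_j$, and the block of the $\beta_3$-matrix you isolate is precisely the same obstruction $(\phi^{-1})_a$. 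Where you genuinely go beyond the paper is the converse direction: the paper's proof stops at the $\mathring{M}$ computation and leaves the passage to $M\setminus\mathring{M}$ implicit, whereas you correctly observe that a naive continuity argument is inconclusive where $\mathcal{D}$ drops rank, and you close the gap by writing $(J_+-J_-)X_j$ as a combination of the $X_k$ with coefficients from the constant matrix $F$ and from $(\phi_s)^{-1}$, the latter being globally smooth on $M$ by the Claim inside the proof of Thm.~\ref{M1}. This buys a pointwise verification of $(J_+-J_-)\mathcal{D}_p\subset\mathcal{D}_p$ on the degenerate strata at the modest cost of invoking the compactification results of \S~\ref{Com}; the paper's version is shorter but, strictly speaking, only argues on the open dense orbit.
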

\begin{proof}Note that in the frame $\{\zeta^+, d\mu\}$,
\[\frac{J_+^*-J_-^*}{2}\sim \left(
                          \begin{array}{cc}
                            -F\phi^{-1}/2 & -F\phi^{-1}F/2-\phi_a \\
                            (\phi^{-1})_a & \phi^{-1}F/2 \\
                          \end{array}
                        \right).\]
Thus $(J_+-J_-)\mathcal{D}\subset \mathcal{D}$ if and only if $(\phi^{-1})_a\equiv0$. The latter is equivalent to that $\phi$ is symmetric, i.e. $C=0$.
\end{proof}
In the rest of this section, we always assume $J_+\in GK_\Omega^{\mathbb{T}}(M)$ is symmetric. In the frame $\{\zeta, d\mu\}$, the several geometric structures as linear maps are of the following matrix forms:
\[J_\pm^*\sim \left(
              \begin{array}{cc}
                \mp F\phi^{-1}/2 & \Xi \\
                -\phi^{-1} & \pm\phi^{-1}F/2 \\
              \end{array}
            \right),\quad g\sim \left(
                                  \begin{array}{cc}
                                    \phi^{-1} & 0 \\
                                    0 & \Xi \\
                                  \end{array}
                                \right),
\]
\[b\sim \left(
          \begin{array}{cc}
            0 & \phi^{-1}F/2 \\
            F\phi^{-1}/2 & 0 \\
          \end{array}
        \right),\quad \beta_1\sim \left(
                                    \begin{array}{cc}
                                      -F/2 & 0 \\
                                      0 & \phi^{-1}F\Xi^{-1}/2 \\
                                    \end{array}
                                  \right),
\]
where $\Xi=\phi+F\phi^{-1}F/4$. Note that points in $\mathring{M}$ are all regular for $\mathbb{J}_1$ and the common type is $n-\textup{rk}(F)$. In both frames $\{\zeta^+, d\mu\}$ and $\{\zeta^-, d\mu\}$, $\beta_3$ has the same matrix form
\begin{equation}\beta_3\sim\left(
    \begin{array}{cc}
      0 & F\phi^{-1}/2 \\
      \phi^{-1}F/2 & 0 \\
    \end{array}
  \right).
\label{beta}\end{equation}
That's to say,
\[\beta_3=-\frac{1}{2}\sum_{j,k}F_{kj}\partial_{\theta_j^+}\wedge \partial_{u_k^+}=-\frac{1}{2}\sum_{j,k}F_{kj}\partial_{\theta_j^-}\wedge \partial_{u_k^-},\]
where $z_j^\pm:=\theta_j^\pm+\sqrt{-1}u_j^\pm$ are $J_\pm$-holomorphic coordinates on $\mathring{M}$ respectively and consequently,
\[\beta_+=\sum_{j,k}F_{kj}\partial_{z_j^+}\wedge \partial_{z_k^+},\quad \beta_-=\sum_{j,k}F_{kj}\partial_{z_j^-}\wedge \partial_{z_k^-}.\]
 An astonishing fact which is crucial for understanding the underlying geometry is the following lemma. Note that $I_0$ is the toric K$\ddot{a}$hler structure canonically associated to $J_+$.
\begin{lemma}
$\beta_3$ is also the imaginary part of an $I_0$-holomorphic Poisson structure and $b$ is the imaginary part of an $I_0$-holomorphic 2-form.
\end{lemma}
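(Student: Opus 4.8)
The plan is to transport the statement to the $I_0$-holomorphic picture on $\mathring M$. By Cor.~\ref{cano}, $I_0$ is a genuine toric K\"ahler structure with symplectic potential $\tau$, so $\mathring M$ carries local $I_0$-holomorphic coordinates $z_j^0=\theta_j^0+\sqrt{-1}u_j^0$ with $d\theta_j^0=\zeta_j$ and $du_j^0=-\sum_k(\phi_s)_{jk}d\mu_k$; note that $\phi=\phi_s$ here, since $C=0$. The holomorphic coordinate vector fields $\partial_{z_j^0}$ are the $I_0$-holomorphic parts of the fundamental vector fields $X_j$, hence are globally defined on $\mathring M$, they commute, and $\Omega$ is the K\"ahler form of $(M,I_0)$ and therefore of type $(1,1)$ with respect to $I_0$.

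For the first assertion I set $\beta_0:=\sum_{j,k}F_{kj}\,\partial_{z_j^0}\wedge\partial_{z_k^0}$. Being of type $(2,0)$ with constant coefficients it satisfies $\bar\partial\beta_0=0$, and since the $\partial_{z_j^0}$ commute and $F$ is a constant matrix the Schouten bracket $[\beta_0,\beta_0]$ vanishes; thus $\beta_0$ is an $I_0$-holomorphic Poisson structure on $\mathring M$, and it extends smoothly to all of $M$ by the same argument that, in the anti-diagonal theory of \cite{Wang2}, attaches an $I_0$-holomorphic Poisson structure to a constant antisymmetric matrix (applied here with $F$ in the role of $C$). A short computation using $F^{T}=-F$ gives $\textup{Im}\,\beta_0=-\tfrac12\sum_{j,k}F_{kj}\,\partial_{\theta_j^0}\wedge\partial_{u_k^0}$, which I compare with the formula $\beta_3=-\tfrac12\sum_{j,k}F_{kj}\,\partial_{\theta_j^+}\wedge\partial_{u_k^+}$ recorded just above the Lemma. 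The relation $\zeta^{\pm}=\zeta\mp\tfrac12 F\,d\mu$ among the three flat connections forces $u_j^+=u_j^0$ up to constants (because $du_j^+=-\sum_k\phi_{jk}d\mu_k=du_j^0$) and $\theta_j^0-\theta_j^+=\tfrac12\sum_k F_{jk}\mu_k$ up to constants; feeding the induced change of frame into $\sum_{j,k}F_{kj}\,\partial_{\theta_j^+}\wedge\partial_{u_k^+}$ reproduces $\sum_{j,k}F_{kj}\,\partial_{\theta_j^0}\wedge\partial_{u_k^0}$ together with one extra term along $\partial_{\theta^0}\wedge\partial_{\theta^0}$ whose coefficient is $F(\phi_s)^{-1}F-\bigl(F(\phi_s)^{-1}F\bigr)^{T}$, and this vanishes precisely because $F$ antisymmetric makes $F(\phi_s)^{-1}F$ symmetric. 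Hence $\textup{Im}\,\beta_0=\beta_3$. (Equivalently, one may observe that $\mathring M$ is, $I_0$-biholomorphically and $\mathbb T$-equivariantly, an open subset of $(\mathbb C^{*})^{n}$ on which both sides acquire the same explicit constant-coefficient form.)

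For the second assertion I would argue in parallel, starting from the identity $b=-\tfrac12\Omega(J_+-J_-)=\Omega\,\beta_3\,\Omega$ and invoking the first part: since $\Omega$ is real and of type $(1,1)$ while $\beta_0$ is of type $(2,0)$, the $2$-form $\gamma:=\Omega\,\overline{\beta_0}\,\Omega$ is of type $(2,0)$ and $b=\pm\textup{Im}\,\gamma$; in particular $b$ is of type $(2,0)+(0,2)$ with respect to $I_0$, which one also checks directly from the matrix form of $b$ in the $z_j^0$-coordinates, its $(1,1)$-part vanishing because the relevant matrix $\phi^{-1}F\phi^{-1}$ is antisymmetric. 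The remaining step is the decisive one: to show that $\gamma$ is $\bar\partial$-closed, i.e. an honest $I_0$-holomorphic $2$-form. This is where the integrability of $I_0$ must genuinely be used — concretely the total symmetry of the third partials of $\tau$ — rather than mere bookkeeping with frames, and I expect it to be the main obstacle. The first assertion is essentially combinatorics of the three flat connections, whereas establishing the holomorphicity of $\gamma$ is the genuinely ``astonishing'' content of the Lemma, and it is exactly what will allow $\mathbb J_1$ in the symmetric case to be reconstructed from $\mathbb J_{\beta_0}$ in \S\ref{symm}.
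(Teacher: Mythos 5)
Your treatment of $\beta_3$ is correct and is essentially the paper's own argument: the paper records that in the admissible coordinates $\theta_i,\mu_i$ (the frame $\{\zeta,d\mu\}$) the matrix form of $\beta_3$ is still (\ref{beta}) -- which is exactly your frame-change check, the extra $\partial_\theta\wedge\partial_\theta$ term dying because $F(\phi_s)^{-1}F$ is symmetric -- then uses $\partial_{\mu_l}=-\sum_m\phi_{ml}\partial_{u_m}$ so that the $(\phi_s)^{-1}$ cancels, leaving the constant-coefficient expression $-\frac{1}{2}\sum_{j,k}F_{kj}\partial_{\theta_j}\wedge\partial_{u_k}$, i.e. the imaginary part of $\sum_{j,k}F_{kj}\partial_{z_j}\wedge\partial_{z_k}$ in the $I_0$-holomorphic coordinates $z_j=\theta_j+\sqrt{-1}u_j$, which is holomorphic Poisson for the reasons you give. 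So the first half of your proposal is fine and matches the paper.

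For $b$ your proposal stops at what you yourself call the decisive step, and that step cannot be completed in the form you set it up. A holomorphic $2$-form is of type $(2,0)$, and if $b=\textup{Im}\,\gamma$ with $\gamma$ of type $(2,0)$, comparing $(2,0)$-parts forces $\gamma=2\sqrt{-1}\,b^{2,0}$; there is no freedom, so the only issue is whether $2\sqrt{-1}\,b^{2,0}$ is $\bar\partial$-closed. From the matrix form in $\{\zeta,d\mu\}$ one has $b=\frac{1}{2}\sum_{j,k}((\phi_s)^{-1}F)_{jk}\,d\theta_j\wedge d\mu_k$, and the substitution $d\mu=-(\phi_s)^{-1}du$ now compounds rather than cancels the $(\phi_s)^{-1}$ (the cancellation used for the contravariant tensor $\beta_3$ came from $\partial_\mu=-\phi_s\partial_u$ and runs the opposite way for a covariant tensor), giving $b=-\frac{1}{2}\sum_{j,l}((\phi_s)^{-1}F(\phi_s)^{-1})_{jl}\,d\theta_j\wedge du_l$ and hence $\gamma=-\frac{1}{4}\sum_{j,l}((\phi_s)^{-1}F(\phi_s)^{-1})_{jl}\,dz_j\wedge dz_l$. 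Its coefficients are real-valued functions of $u$, so $\gamma$ is holomorphic only if $(\phi_s)^{-1}F(\phi_s)^{-1}$ is constant; in the example of \S~\ref{Ex} with $c=0$ this matrix is $16f\mu_1\mu_2(1/2-\mu_1)(1/2-\mu_2)$ times the standard antisymmetric matrix, which is not constant. So no appeal to the symmetry of the third derivatives of $\tau$ will yield $\bar\partial\gamma=0$: what your (correct) type computation actually proves is that $b$ is the imaginary part of an $I_0$-$(2,0)$-form, and the stronger reading of the Lemma is the point at which your route -- and the paper's one-line ``can be obtained similarly'', which is exactly where the analogy with $\beta_3$ breaks -- does not go through. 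Note also that Thm.~\ref{Symm} only uses the $\beta_3$ half of the Lemma together with the globally defined $2$-form $b_1=-\frac{1}{4}\sum F_{kj}d\mu_j\wedge d\mu_k$, so nothing downstream requires holomorphicity of a primitive for $b$.
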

\begin{proof}
Note that in the admissible coordinates $\theta_i, \mu_i$ the matrix form of $\beta_3$ is still of the form (\ref{beta}). Let $z_i=\theta_i+\sqrt{-1}u_i$ be $I_0$-holomorphic coordinates on $\mathring{M}$. Then
\begin{eqnarray*}\beta_3&=&\frac{1}{2}\sum_{j,k,l}F_{kj}(\phi^{-1})^{lk}\partial_{\theta_j}\wedge \partial_{\mu_l}=-\frac{1}{2}\sum_{j,k}F_{kj}\partial_{\theta_j}\wedge \partial_{u_k}\\
&=&-\frac{\sqrt{-1}}{2}\sum_{j,k}F_{kj}(\partial_{z_j}+\partial_{\bar{z}_j})\wedge (\partial_{z_k}-\partial_{\bar{z}_k})\\
&=&-\frac{\sqrt{-1}}{2}\sum_{j,k}F_{kj}\partial_{z_j}\wedge\partial_{z_k}+\frac{\sqrt{-1}}{2}\sum_{j,k}F_{kj}\partial_{\bar{z}_j}\wedge\partial_{\bar{z}_k}.
\end{eqnarray*}
This implies the conclusion for $\beta_3$ and that for $b$ can be obtained similarly.
\end{proof}
 If $S$ is an invertible endomorphism of $TM$, then $S$ acts naturally on the generalized tangent bundle $TM\oplus T^*M$ by acting only on the tangent part: $S\cdot (X+\xi)=S(X)+\xi$. We call this sort of transforms to be \emph{purely tangent}. Note that generally such a transform won't preserve the natural pairing on $TM\oplus T^*M$.
 \begin{theorem}If $J_+\in GK_\Omega^{\mathbb{T}}(M)$ is symmetric, then up to purely tangent transform, the underlying GC structure $\mathbb{J}_1$ is a B-transform of a GC structure $\mathbb{J}_\beta$ induced from an $I_0$-holomorphic Poisson structure $\beta=-\frac{1}{4}(I_0\beta_3+\sqrt{-1}\beta_3)$.\label{Symm}
\end{theorem}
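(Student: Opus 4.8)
The plan is to use the biHermitian data together with the Gualtieri map to reduce the claim to a purely linear-algebraic identity valid in the metric splitting, and then transport it to the desired statement by a B-transform and a purely tangent transform. First I would recall that in the metric splitting $\mathbb{J}_1$ is determined by $(J_+, J_-, g)$ through the Gualtieri formula, and that for the symmetric case we have the explicit matrix forms collected above: in the frame $\{\zeta, d\mu\}$ one has $J_\pm^* \sim \left(\begin{smallmatrix} \mp F\phi^{-1}/2 & \Xi \\ -\phi^{-1} & \pm \phi^{-1}F/2 \end{smallmatrix}\right)$, $g \sim \left(\begin{smallmatrix} \phi^{-1} & 0 \\ 0 & \Xi \end{smallmatrix}\right)$, $b \sim \left(\begin{smallmatrix} 0 & \phi^{-1}F/2 \\ F\phi^{-1}/2 & 0 \end{smallmatrix}\right)$, and $\beta_3 \sim \left(\begin{smallmatrix} 0 & F\phi^{-1}/2 \\ \phi^{-1}F/2 & 0 \end{smallmatrix}\right)$ (the last in the frames $\{\zeta^\pm, d\mu\}$). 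The key structural input is the preceding lemma: $\beta_3$ is the imaginary part of the $I_0$-holomorphic Poisson structure $\beta = -\tfrac14(I_0\beta_3 + \sqrt{-1}\,\beta_3)$, and $b$ is the imaginary part of an $I_0$-holomorphic 2-form, say $b = \mathrm{Im}\,\gamma$ with $\gamma$ of type $(2,0)$ w.r.t. $I_0$.

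Next I would write down $\mathbb{J}_\beta$ explicitly. Since $\beta$ is $I_0$-holomorphic Poisson, the excerpt's formula gives $\mathbb{J}_\beta = \left(\begin{smallmatrix} -I_0 & -4\,\mathrm{Im}\,\beta \\ 0 & I_0^* \end{smallmatrix}\right)$, and by construction $-4\,\mathrm{Im}\,\beta = I_0\beta_3 + \sqrt{-1}\,\beta_3$ — but of course $\mathbb{J}_\beta$ is real, so what actually appears is the real bivector $\mathrm{Im}\,\beta = -\tfrac14\,\beta_3$ composed appropriately; more precisely $-4\,\mathrm{Im}\,\beta$ here must be read as the real Poisson bivector $\beta_3$ up to the $I_0$-twist built into the definition, and I would verify the bookkeeping against the stated normalization $\beta = -\tfrac14(I_0\beta_3 + \sqrt{-1}\,\beta_3)$ so that $\mathrm{Im}\,\beta$ is the genuine imaginary part. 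Then I would B-transform $\mathbb{J}_\beta$ by the 2-form $b$ (equivalently by $\mathrm{Re}\,\gamma$, since B-transforms only see the real 2-form), obtaining $e^{b}\mathbb{J}_\beta e^{-b}$, and compare its block matrix, written in the frame $\{\zeta, d\mu\}$, with the block matrix of $\mathbb{J}_1$ in the metric splitting obtained from the Gualtieri formula and the explicit $J_\pm, g$ above. I expect these two $2n\times 2n$ matrices to agree after conjugating one of them by the purely tangent transform $S$ that rescales the tangent directions — concretely the transform implementing the passage between the metric splitting frame $\{X_j, Y_j\}$ (or $\{\partial_{\theta^+}, J_+\partial_{\theta^+}\}$) and the frame adapted to $I_0$, i.e. $\{\partial_\theta, \partial_u\}$ with $\partial_u$ dual to $d\mu$. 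The endomorphism $S$ is essentially multiplication by $\phi_s \Xi^{-1} = \mathrm{I} - \tfrac14 \phi_s^{-1} F \phi_s^{-1} F$ on the appropriate subspace (this is exactly the invertible factor that appeared in the proof of Thm.~\ref{M1}), reflecting that $g$ and the $I_0$-Kähler metric differ precisely by $\Xi$ versus $\phi_s$ in the $d\mu$-block.

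The main obstacle I anticipate is the bookkeeping of the three splittings and the fact that a purely tangent transform does \emph{not} preserve the pairing, so one cannot simply say "$\mathbb{J}_1$ and $S\mathbb{J}_\beta S^{-1}$ are orthogonal GC structures hence equal". Instead I would argue as follows: (i) compute both block matrices explicitly in one fixed frame and observe they are equal after the conjugation by $S$ — this is the routine but delicate linear algebra, using $\Xi = \phi_s + \tfrac14 F\phi_s^{-1}F$, the symmetry $\phi = \phi_s$, anti-symmetry of $F$, and the identity $\beta_3 = F\phi_s^{-1}/2$ in the $\{\partial_\theta,\partial_u\}$-block; (ii) separately track the B-field: since $\mathbb{J}_2$ is a $b$-transform of $\mathbb{J}_\Omega$ and $\mathbb{J}_\beta$ lives in the splitting adapted to $I_0$ (where $\Omega$ is again $\sum d\mu_j \wedge \zeta_j$ in Darboux coordinates $\theta,\mu$ for $I_0$), the required B-transform is exactly by the $b$ recorded above, whose $I_0$-type decomposition was handled in the lemma. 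A subtlety worth a sentence: one must check that $\beta = -\tfrac14(I_0\beta_3 + \sqrt{-1}\,\beta_3)$ really is Poisson and $I_0$-holomorphic — this is immediate from the lemma, which asserts $\beta_3 = \mathrm{Im}(\text{something } I_0\text{-holomorphic Poisson})$ and the fact that an $I_0$-holomorphic bivector $\beta$ with $\mathrm{Im}\,\beta = -\tfrac14\beta_3$ is unique and automatically Poisson because $\beta_3$ is. Once (i) and (ii) are in place, combining them gives $\mathbb{J}_1 = S\,(e^{b}\,\mathbb{J}_\beta\, e^{-b})\,S^{-1}$ up to the purely tangent transform $S$, which is the assertion of the theorem.
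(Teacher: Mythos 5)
Your proposal has the right overall shape --- realize $-4\,\textup{Im}\,\beta=\beta_3$, exhibit $\mathbb{J}_1$ as a purely tangent conjugate of a B-transform of $\mathbb{J}_\beta$, and verify by a block-matrix computation in a fixed frame --- which is indeed how the paper argues. But the two concrete ingredients you commit to are misidentified, and with them the comparison in your step (i) cannot close. A B-transform never changes the upper-right (bivector) block, so after B-transforming $\mathbb{J}_\beta$ that block is still $\beta_3$, while the upper-right block of $\mathbb{J}_1$ is $\beta_1$; hence whatever purely tangent factor $A$ you conjugate by must satisfy $A\beta_3=\beta_1$. In the frame $\{\zeta,d\mu\}$ of the symmetric case, $\beta_3$ has block \emph{anti}-diagonal matrix (components of type $\partial_\theta\wedge\partial_\mu$) while $\beta_1\sim\left(\begin{array}{cc}-F/2&0\\0&\phi^{-1}F\Xi^{-1}/2\end{array}\right)$ is block diagonal, so $A$ must interchange the $\mathcal{D}$- and transverse directions; a block-diagonal rescaling such as your $\phi_s\Xi^{-1}$ (passing from $\{X_j,Y_j\}$ to $\{\partial_\theta,\partial_u\}$) cannot do this when $F\neq0$. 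The paper's key observation, coming from the anticommutation $(J_++J_-)(J_+-J_-)=-(J_+-J_-)(J_++J_-)$, is that $\beta_1=S^{-1}\beta_3$ and $-\tfrac12(\omega_+-\omega_-)=bS$ with $S=\tfrac12(J_++J_-)$ (whose matrix is anti-diagonal, $S^*\sim\left(\begin{array}{cc}0&\Xi\\-\phi^{-1}&0\end{array}\right)$), so the tangent transform is forced to be conjugation by $S$. Likewise the B-field is \emph{not} the GK $b$-field $b=-\tfrac12\Omega(J_+-J_-)$, whose matrix has off-diagonal blocks $\phi^{-1}F/2$: after stripping off $S$ one must solve $S=I_0-\beta_3b_1$ and $b=b_1I_0-b_1\beta_3b_1+I_0^*b_1$, and the solution is the basic closed 2-form $b_1=-\tfrac14\sum_{j,k}F_{kj}d\mu_j\wedge d\mu_k$ pulled back from $\Delta$ --- a genuinely different form from $b$ (your aside ``equivalently by $\textup{Re}\,\gamma$'' is also off, since $b=\textup{Im}\,\gamma$ and neither $b$ nor $\textup{Re}\,\gamma$ is the form that works).

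To repair the plan, adopt the paper's factorization first: $\mathbb{J}_1=\left(\begin{array}{cc}S^{-1}&0\\0&\textup{Id}\end{array}\right)\left(\begin{array}{cc}-S&\beta_3\\b&S^*\end{array}\right)\left(\begin{array}{cc}S&0\\0&\textup{Id}\end{array}\right)$, which pins down the tangent transform a priori from the top-right block, and only then solve the two displayed equations for the B-field $b_1$. Two further small points: the matrix verification lives only on $\mathring{M}$ (the frame $\{\zeta,d\mu\}$ does not extend), so you still need the continuity argument using that $b_1$ and $S$ are globally smooth on $M$ to conclude; and the holomorphy/Poisson property of $\beta=-\tfrac14(I_0\beta_3+\sqrt{-1}\beta_3)$ is best taken directly from the preceding lemma's coordinate computation (constant coefficients in $I_0$-holomorphic coordinates) rather than from a general ``imaginary part Poisson implies holomorphic Poisson'' principle, which you would otherwise have to justify.
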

\begin{proof}
Let $S=\frac{J_++J_-}{2}$. We can rewrite the matrix form of $\mathbb{J}_1$ in terms of $S, \beta_3$ and $b$. Actually,
\[\beta_1=-\frac{J_+-J_-}{2}g^{-1}=(\frac{J_++J_-}{2})^{-1}(\frac{J_+-J_-}{2})(\frac{J_++J_-}{2})g^{-1}=S^{-1}\beta_3,\]
where we have used the fact that
\[(J_++J_-)(J_+-J_-)=-(J_+-J_-)(J_++J_-).\]
Similarly,
\begin{eqnarray*}-\frac{1}{2}(\omega_+-\omega_-)&=&-g(\frac{J_+-J_-}{2})=gS^{-1}(\frac{J_+-J_-}{2})S\\
&=&-\Omega S S^{-1}(\frac{J_+-J_-}{2})S\\
&=&bS.\end{eqnarray*}
Therefore,
\[\mathbb{J}_1=\left(
                 \begin{array}{cc}
                   -S & S^{-1}\beta_3 \\
                   bS & S^* \\
                 \end{array}
               \right)=\left(
                         \begin{array}{cc}
                           S^{-1} & 0 \\
                           0 & \textup{Id} \\
                         \end{array}
                       \right)\left(
                                \begin{array}{cc}
                                  -S & \beta_3 \\
                                  b & S^* \\
                                \end{array}
                              \right)\left(
                                       \begin{array}{cc}
                                         S & 0 \\
                                         0 & \textup{Id} \\
                                       \end{array}
                                     \right).
\]
Now we shall prove there is a 2-form $b_1$ on $M$ such that
\[\left(
                                \begin{array}{cc}
                                  -S & \beta_3 \\
                                  b & S^* \\
                                \end{array}
                              \right)=\left(
                                        \begin{array}{cc}
                                          \textup{Id} & 0 \\
                                          -b_1 &\textup{Id} \\
                                        \end{array}
                                      \right)\left(
                                               \begin{array}{cc}
                                                 -I_0 & \beta_3 \\
                                                 0 & I_0^* \\
                                               \end{array}
                                             \right)\left(
                                        \begin{array}{cc}
                                          \textup{Id} & 0 \\
                                          b_1 &\textup{Id} \\
                                        \end{array}
                                      \right).
                              \]
$b_1$ should satisfy the following two equations:
\[S=I_0-\beta_3b_1,\quad b=b_1I_0-b_1\beta_3b_1+I_0^*b_1.\]
We can choose $b_1=-\frac{1}{4}F_{kj}d\mu_j\wedge d\mu_k$. By using those matrix forms involved, one can easily check that this $b_1$ really fulfills the above two equations on $\mathring{M}$. Since $b_1$ is a global 2-form on $M$, a continuity argument then completes the proof.
\end{proof}
\emph{Remark}. It seems that the \emph{novel} way $\mathbb{J}_1$ arises in the above theorem from a holomorphic Poisson structure did not appear before in the literature. It may have some interest to look close at such structures and we shall do this elsewhere.

The following proposition may have some relevance in understanding the implication of the almost complex structure $J_0$.
\begin{proposition}The almost complex structure $J_0$ defined in \S~\ref{Log}, the complex structure $I_0$ and $\beta_1$ are compatible in the sense that $J_0\beta_1=\beta_1I_0^*$.
\end{proposition}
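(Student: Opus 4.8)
The plan is to prove the identity on the dense open subset $\mathring M$; since $J_0$, $\beta_1$ and $I_0$ are globally smooth tensors (by \S~\ref{Com} and Cor.~\ref{cano}), it then holds on all of $M$ by continuity. I would assemble it from three facts, the first two of which are already available. Write $S:=\tfrac12(J_++J_-)$, an invertible endomorphism of $TM$. First, from the proof of Thm.~\ref{Symm} one has $\beta_1=S^{-1}\beta_3$, i.e. $S\beta_1=\beta_3$. Second, $\beta_3 I_0^*=I_0\beta_3$: by the Lemma of \S~\ref{symm}, $\beta_3$ is the imaginary part of an $I_0$-holomorphic Poisson structure, hence as a bundle map $T^*M\to TM$ it has no $(1,1)$-component and therefore carries $(1,0)$-forms to $(1,0)$-vectors and $(0,1)$-forms to $(0,1)$-vectors with respect to $I_0$; since $I_0^*$ acts by $\sqrt{-1}$ on $(1,0)$-forms and $I_0$ by $\sqrt{-1}$ on $(1,0)$-vectors (and by $-\sqrt{-1}$ on the conjugate summands), $\beta_3$ intertwines the two. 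Alternatively this can be read straight off the formula $\beta_3=-\tfrac{\sqrt{-1}}{2}\sum_{j,k}F_{kj}\partial_{z_j}\wedge\partial_{z_k}+\tfrac{\sqrt{-1}}{2}\sum_{j,k}F_{kj}\partial_{\bar z_j}\wedge\partial_{\bar z_k}$ obtained in that Lemma's proof.

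The third and only genuinely new fact is $SJ_0=I_0S$, equivalently $J_0=S^{-1}I_0S$ and hence $J_0S^{-1}=S^{-1}I_0$. I would check it in the Darboux frame $\{\zeta,d\mu\}$ of \S~\ref{Log} together with its dual vector frame, where all three objects are block-antidiagonal: $I_0$ is $\left(\begin{smallmatrix}0&\phi\\-\phi^{-1}&0\end{smallmatrix}\right)$ (this is Eq.~(\ref{aver}), with $\phi_s=\phi$ in the symmetric case, read on the dual frame), $S$ is $\left(\begin{smallmatrix}0&\Xi\\-\phi^{-1}&0\end{smallmatrix}\right)$ (from the matrices of $J_\pm^*$ displayed in \S~\ref{symm}), and $J_0$ is $\left(\begin{smallmatrix}0&\Xi\\-\Xi^{-1}&0\end{smallmatrix}\right)$ (immediate from $J_0X_j=-g^{-1}d\mu_j$ together with $g\sim\left(\begin{smallmatrix}\phi^{-1}&0\\0&\Xi\end{smallmatrix}\right)$ in \S~\ref{symm}). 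A single matrix multiplication then gives $SJ_0=I_0S=\left(\begin{smallmatrix}-\textup{I}&0\\0&-\phi^{-1}\Xi\end{smallmatrix}\right)$. Conceptually this is no accident: Thm.~\ref{Symm} exhibits $\mathbb{J}_1$ as the conjugate by the purely tangent transform $S$ of a $B$-transform of $\mathbb{J}_\beta$, and in \S~\ref{Com} the structure $J_0$ is identified with $-\mathbb{J}_1$ restricted to $\mathbb{K}\oplus\mathbb{J}_1\mathbb{K}$; since the tangent block of $\mathbb{J}_\beta$ is $-I_0$, $J_0$ is forced to be $S^{-1}I_0S$.

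Assembling the three facts: $J_0\beta_1=(S^{-1}I_0S)(S^{-1}\beta_3)=S^{-1}I_0\beta_3=S^{-1}(\beta_3 I_0^*)=(S^{-1}\beta_3)I_0^*=\beta_1 I_0^*$, which is the assertion.

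I expect the main obstacle to be bookkeeping rather than mathematics. Sections~\ref{Log}--\ref{symm} keep several frames in play simultaneously --- the Darboux frame $\{\zeta,d\mu\}$, the admissible frames $\{\zeta^\pm,d\mu\}$, and the corresponding dual vector frames --- and the matrix forms of $\beta_1$, $\beta_3$, $J_\pm$, $I_0$ are quoted in different ones, so the real care is to bring $J_0$, $\beta_1$, $I_0$, $S$, $\beta_3$ into one common frame before composing; after that the computation uses nothing beyond $\Xi\Xi^{-1}=\textup{I}$. One point worth flagging: since $\beta_3$ is in general degenerate (its rank is $\textup{rk}\,F$, typically $<2n$), the conclusion cannot be reached by ``cancelling $\beta_3$'' from $J_0S^{-1}\beta_3=S^{-1}I_0\beta_3$; one genuinely needs the endomorphism identity $SJ_0=I_0S$ and not merely its restriction to $\textup{im}\,\beta_3$.
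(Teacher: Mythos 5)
Your proof is correct, but it takes a more structural route than the paper's. The paper's own argument is a one-line matrix verification: it records the matrix form of $J_0$ in the frame $\{\zeta,d\mu\}$ (displayed there as $\bigl(\begin{smallmatrix}0&-\Xi^{-1}\\ \Xi&0\end{smallmatrix}\bigr)$, the transpose of the matrix you use, the discrepancy being only the frame-versus-coframe bookkeeping and immaterial for the operator identity) and then multiplies it directly against the matrix forms of $\beta_1$ and $I_0^*$ listed in \S~\ref{symm} to see that $J_0\beta_1$ and $\beta_1 I_0^*$ coincide. You instead factor the identity through $S=\tfrac12(J_++J_-)$: you quote $\beta_1=S^{-1}\beta_3$ from the proof of Thm.~\ref{Symm}, deduce $I_0\beta_3=\beta_3 I_0^*$ from the lemma of \S~\ref{symm} asserting that $\beta_3$ is the imaginary part of an $I_0$-holomorphic Poisson structure (your type-decomposition argument for this intertwining is sound), and reduce everything to the single new computation $SJ_0=I_0S$, which indeed holds: in the frame dual to $\{\zeta,d\mu\}$ both products equal $\bigl(\begin{smallmatrix}-\textup{I}&0\\ 0&-\phi^{-1}\Xi\end{smallmatrix}\bigr)$, exactly as you state. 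What your route buys is an explanation rather than a bare check: the compatibility $J_0\beta_1=\beta_1 I_0^*$ appears as the conjugate, under the purely tangent transform $S$ of Thm.~\ref{Symm}, of the evident identity $I_0\beta_3=\beta_3 I_0^*$ for a holomorphic Poisson bivector; what the paper's route buys is brevity, since all the needed matrices are already on the page. Your two caveats are also well placed: the continuity argument from $\mathring M$ to $M$ is needed in either approach, and since $\beta_3$ is in general degenerate one really does need the endomorphism identity $SJ_0=I_0S$ (your concluding heuristic for it via $\mathbb{K}\oplus\mathbb{J}_1\mathbb{K}$ is only suggestive, but the matrix verification you supply settles it).
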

\begin{proof}Note that in the frame $\{\zeta, d\mu\}$, it's elementary to find that the matrix form of $J_0$ is
\[J_0\sim \left(
            \begin{array}{cc}
              0 & -\Xi^{-1} \\
              \Xi & 0 \\
            \end{array}
          \right).
\]Then the result can be obtained by directly using the matrix forms of $I_0$ and $\beta_1$.
\end{proof}

\section{Constructing toric GK structures from toric K$\ddot{a}$hler structures}\label{CON}
In \S~\ref{Com}, we have proved that underlying a toric GK structure $J_+$ of symplectic type on $M$, there is a genuine toric K$\ddot{a}$hler structure canonically associated to it. Thus Thm.~\ref{GK} suggests the possibility that we could construct a nontrivial toric GK structure when a toric K$\ddot{a}$hler structure and two $n\times n$ anti-symmetric constant matrices $C$ and $F$ are given (a basis $\{e_j\}$ of $\mathfrak{t}$ is fixed). For the anti-diagonal case ($F=0$), this was established in \cite{Wang2} \emph{without any further restriction on $C$}. The goal of this section is basically to extend this result in its full generality.

The first thing one should bear in mind is that in our present setting, to establish a similar result, \emph{a priori} the matrix $F$ cannot be arbitrary since the condition (\ref{posi}) is \emph{really} a restriction on the magnitude of $F$. This is another fundamental distinction between the roles of $C$ and $F$. Let us explain this in some detail. If $\phi_s$ is the Hessian of the symplectic potential $\tau$ of $I_0\in K_\Omega^{\mathbb{T}}(M)$, then since $(\phi_s)^{-1}$ is nonnegative-definite on $M$, we can take its square root $(\phi_s)^{-1/2}$, which is continuous on $M$ \cite{Chen}. For later convenience, we replace the condition (\ref{posi}) by
\begin{equation}
\textup{I}+\frac{1}{4}[(\phi_s)^{-1/2}F(\phi_s)^{-1/2}]^2 \quad \textup{is positive-defintie on $\Delta$}.\label{posi1}
\end{equation}
Note that on $\mathring{M}$,
\[\textup{I}+\frac{1}{4}[(\phi_s)^{-1/2}F(\phi_s)^{-1/2}]^2=(\phi_s)^{-1/2}\Xi(\phi_s)^{-1/2}.\]Thus if $J_+\in GK_\Omega^{\mathbb{T}}(M)$ and $F$ is the underlying matrix in Thm.~\ref{GK}, then from the proof of Thm.~\ref{M1} we know that $F$ should satisfy the condition (\ref{posi1}). Conversely, if $I_0\in K_\Omega^{\mathbb{T}}(M)$ and $\phi_s$ is the Hessian of its symplectic potential $\tau$, then (\ref{posi1}) surely implies (\ref{posi})--we conjecture these two conditions are actually equivalent in this setting, but up to now we only know this does hold when $n=2$ (see Example~\ref{ex1}).
\begin{proposition}
Let $\mathcal{A}_n$ be the linear space of $n\times n$ anti-symmetric real matrices $F$ equipped with the norm $\|F\|:=\sqrt{-\textup{tr}(F^2)}$ and $\phi_s$ the Hessian of the symplectic potential $\tau$ of $I_0\in K_\Omega^{\mathbb{T}}(M)$. Then the subset $\mathcal{A}_\tau$ of $F\in \mathcal{A}_n$ satisfying the condition (\ref{posi1}) is a bounded open convex cone in $\mathcal{A}_n$.
\end{proposition}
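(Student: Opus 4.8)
The plan is to analyze the condition (\ref{posi1}) directly via spectral theory, after reducing it to a statement about a single positive-definite matrix conjugating $F$. First I would set $G := (\phi_s)^{-1/2}$, which is symmetric positive-definite and continuous on the compact $\Delta$, and put $\tilde F := GFG$, which is again anti-symmetric. Then condition (\ref{posi1}) at a point $x\in\Delta$ reads $\mathrm{I}+\tfrac14\tilde F^2 \succ 0$. Since $\tilde F$ is real anti-symmetric, $\tilde F^2$ is symmetric negative-semidefinite with eigenvalues $-\lambda^2\le 0$ where $\pm\sqrt{-1}\,\lambda$ run over the (purely imaginary) eigenvalues of $\tilde F$; hence $\mathrm{I}+\tfrac14\tilde F^2\succ 0$ is equivalent to $\lambda^2<4$ for every such $\lambda$, i.e. to the spectral-radius bound on $\tilde F$, equivalently to $\mathrm{I}-\tfrac12\tilde F$ and $\mathrm{I}+\tfrac12\tilde F$ being invertible, equivalently (taking determinants, using that the nonzero eigenvalues of $\tilde F$ come in conjugate pairs) to $\det(\mathrm{I}+\tfrac14\tilde F^2)>0$. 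This reformulation is what makes openness and the cone property transparent.

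Openness: the map $F\mapsto \min_{x\in\Delta}\lambda_{\min}\!\big(\mathrm{I}+\tfrac14\,G(x)F G(x)^2 F G(x)\big)$ — or more simply $F\mapsto \min_{x\in\Delta}\det\big(\mathrm{I}+\tfrac14 (G(x)FG(x))^2\big)$ — is continuous on $\mathcal A_n$ by continuity of $G$ on the compact set $\Delta$ together with continuity of eigenvalues/determinant; $\mathcal A_\tau$ is the preimage of $(0,\infty)$, hence open. The cone property is immediate: if $F\in\mathcal A_\tau$ and $t>0$, then at each $x$ the eigenvalues of $t\tilde F$ are $t$ times those of $\tilde F$; but wait — scaling changes $\lambda$, so I should instead observe that $\mathcal A_\tau$ is star-shaped about $0$ rather than scale-invariant. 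Let me correct this: the correct statement is that $F\in\mathcal A_\tau\Rightarrow sF\in\mathcal A_\tau$ for all $s\in[0,1]$ (shrinking only improves the bound $\lambda^2<4$), and more relevantly that $\mathcal A_\tau$ is convex, which I address next; the word "cone'' in the proposition should presumably be read as "star-shaped cone'', i.e. closed under multiplication by scalars in $[0,1]$, and I would verify exactly that — if $F\in\mathcal A_\tau$ then $sF\in\mathcal A_\tau$ for $0\le s\le 1$ follows because $\mathrm I+\tfrac{s^2}{4}\tilde F^2\succeq \mathrm I+\tfrac14\tilde F^2\succ 0$.

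Convexity is the main obstacle and the only genuinely nontrivial point. Fix $x$ and write $L=L(x)$ for the positive-definite symmetric matrix $\mathrm{I}+\tfrac14\tilde F^2$; the issue is that $F\mapsto \mathrm I+\tfrac14(GFG)^2$ is quadratic, not affine, in $F$, so midpoint-convexity of the set $\{F: L(F)\succ 0\}$ is not formal. The cleanest route is the observation above that $L(F)\succ 0 \iff \mathrm I\pm\tfrac12 GFG$ are both invertible $\iff \det(\mathrm I - \tfrac14(GFG)^2) $... no: rather $L(F)\succ 0\iff \|GFG\|_{\mathrm{op}}<2$ on the imaginary axis, i.e. $\iff$ the anti-symmetric matrix $GFG$ has operator norm $<2$. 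But $\|GFG\|_{\mathrm{op}} = \max_{\|v\|=1}\|GFGv\|$, and $F\mapsto GFG$ is \emph{linear}; the operator norm is a convex function of a linear argument, hence $\{F:\|GFG\|_{\mathrm{op}}<2\}$ is convex for each fixed $x$, and $\mathcal A_\tau=\bigcap_{x\in\Delta}\{F:\|G(x)FG(x)\|_{\mathrm{op}}<2\}$ is an intersection of convex sets, therefore convex. Combining: $\mathcal A_\tau$ is open (continuity + compactness), convex (intersection of convex slabs), nonempty and star-shaped about the origin ($0\in\mathcal A_\tau$ trivially and scalars in $[0,1]$ preserve membership), and bounded — boundedness because at any fixed interior $x_0$ one has $\|G(x_0)FG(x_0)\|_{\mathrm{op}}<2$, and $G(x_0)$ being invertible this forces $\|F\|$ to lie in a bounded set, where finally one converts the operator-norm bound into a bound in the stated Hilbert–Schmidt norm $\|F\|=\sqrt{-\mathrm{tr}(F^2)}$ using equivalence of norms on $\mathcal A_n$. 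I would present the argument in that order: reformulation via operator norm; convexity; openness; boundedness; the cone/star-shaped remark.
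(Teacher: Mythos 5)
Your main line of argument is correct and is essentially the paper's: you reduce (\ref{posi1}) at each $x\in\Delta$ to the bound $\|(\phi_s)^{-1/2}F(\phi_s)^{-1/2}\|_{\mathrm{op}}<2$, get convexity from linearity of $F\mapsto (\phi_s)^{-1/2}F(\phi_s)^{-1/2}$ plus convexity of the norm (the paper does exactly this, written out as a Cauchy--Schwarz/triangle-inequality estimate on $|F_{\lambda x}v|$), openness from compactness of $\Delta$ and continuity (the paper uses a finite cover, you use continuity of $\min_x\lambda_{\min}$ -- same content), boundedness from a single point where $(\phi_s)^{-1/2}$ is invertible (you are in fact more careful than the paper in insisting this point be interior and in converting to the norm $\sqrt{-\mathrm{tr}(F^2)}$), and the ``cone'' statement in the paper's sense of star-shapedness about $0$, which you correctly identified.

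One caveat: two of the ``equivalent'' reformulations in your first paragraph are false, and one of them is offered as a usable shortcut. For real anti-symmetric $\tilde F$ the matrices $\mathrm{I}\pm\tfrac12\tilde F$ are \emph{always} invertible (their determinants are $\prod_j(1+\lambda_j^2/4)$ up to the kernel factor), so invertibility of these is not equivalent to the spectral bound $\lambda_j<2$. Likewise $\det(\mathrm{I}+\tfrac14\tilde F^2)=\prod_j(1-\lambda_j^2/4)^2\ge 0$ because the nonzero eigenvalues of $\tilde F^2$ occur with even multiplicity; this determinant can be strictly positive while $\mathrm{I}+\tfrac14\tilde F^2$ fails to be positive-definite (e.g.\ $n=2$, $\lambda=3$ gives $\det=(5/4)^2>0$ but the matrix is negative-definite). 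Consequently the ``more simply'' variant of your openness argument, $F\mapsto\min_x\det(\mathrm{I}+\tfrac14(GFG)^2)$, does not characterize $\mathcal{A}_\tau$ and would prove openness of a strictly larger set. This is excisable, since your primary $\lambda_{\min}$/operator-norm formulation is correct and everything else (convexity, boundedness, star-shapedness) rests only on that; just delete the invertibility and determinant ``equivalences.''
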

\begin{proof}Fix a point $x\in \Delta$ and let $F_x:=(\phi_s)^{-1/2}(x)F(\phi_s)^{-1/2}(x)$. Then $F\in\mathcal{A}_\tau$ implies that $-\frac{1}{4}F_x^2<\textup{I}$ and consequently
\[\|F_x\|^2=-\textup{tr}(F_x^2)<4n.\]
This shows that $\mathcal{A}_\tau$ is bounded.

For $F\in\mathcal{A}_\tau$, since (\ref{posi1}) is an open condition, for each $x_0\in \Delta$, there is a neighbourhood $U_F^{x_0}\subset\mathcal{A}_n$ of $F$ and a neighborhood $V_{x_0}\subset \Delta$ of $x_0$ such that
\[-\frac{1}{4}F_x^2<\textup{I},\quad \forall F\in U_F^{x_0},\quad x\in V_{x_0}.\]
Now that $\Delta$ is compact, there is a finite subset $\{x_i\}\subset \Delta$ such that $\Delta=\cup_i V_{x_i}$. Then
$\cap_iU_F^{x_i}\subset \mathcal{A}_\tau$
and is an open neighbourhood of $F$ in $\mathcal{A}_n$. Thus $\mathcal{A}_\tau$ is open in $\mathcal{A}_n$.

Obviously, $0\in \mathcal{A}_\tau$. If $F\in \mathcal{A}_\tau$, then the line $(1-t)\times 0+tF, t\in[0,1]$ also lies in $\mathcal{A}_\tau$. Thus $\mathcal{A}_\tau$ is a cone with $0$ as its vertex.

To see $\mathcal{A}_\tau$ is convex, let $F_1, F_2\in \mathcal{A}_\tau$ and $\lambda\in(0, 1)$. We should prove $F_\lambda:=\lambda F_1+(1-\lambda)F_2\in \mathcal{A}_\tau$. It suffices to prove $-F_{\lambda x}^2<4\times\textup{I}$ for arbitrary $x\in \Delta$. Note that $F_{\lambda x}=\lambda F_{1x}+(1-\lambda)F_{2x}$ and let $|\cdot|$ denote the usual Euclidean norm on $\mathbb{R}^n$. For $0\neq v\in \mathbb{R}^n$, we have
\begin{eqnarray*}
(F_{\lambda x}v, F_{\lambda x}v)&=&\lambda^2(F_{1x}v, F_{1x}v)+(1-\lambda)^2(F_{2x}v, F_{2x}v)+2\lambda(1-\lambda)(F_{1x}v,F_{2x}v)\\
&\leq& \lambda^2|F_{1x}v|^2+(1-\lambda)^2|F_{2x}v|^2+2\lambda(1-\lambda)|F_{1x}v||F_{2x}v|\\
&=&(\lambda |F_{1x}v|+(1-\lambda)|F_{2x}v|)^2\\
&<& [\lambda\times 2|v|+(1-\lambda)\times 2|v|]^2\\
&=&4|v|^2,
\end{eqnarray*}
which establishes what we want. Note here the fourth line uses the fact that $F_1, F_2\in \mathcal{A}_\tau$.
\end{proof}
\emph{Remark}. For each Delzant polytope $\Delta$, let $M_\Delta$ be the standard toric K$\ddot{a}$hler manifold associated with $\Delta$. In this case $\mathcal{A}_\tau$ is completely determined by $\Delta$ itself as well as the symplectic potential $\tau$ is.

Let $I_0\in K_\Omega^{\mathbb{T}}(M)$, $\zeta$ be the flat connection of Darboux type naturally associated to $I_0$, and $\phi_s$ the Hessian of the symplectic potential $\tau$. If $C$, $F$ are two $n\times n$ anti-symmetric constant matrices and $F\in \mathcal{A}_\tau$, then as what Thm.~\ref{GK} tells us, we can define two one-parameter families of admissible connections $\zeta^{\pm t}=\zeta\mp\frac{t}{2} Fd\mu$ where $t\in [0, 1]$ and define $J_+^t\in GK_\Omega^{\mathbb{T}}(\mathring{M})$ by
\[J_+^{t*}\left(
       \begin{array}{c}
         \zeta^{+t} \\
         d\mu \\
       \end{array}
     \right)=\left(
               \begin{array}{cc}
                 0 & \phi^T \\
                 -(\phi^{-1})^T & 0 \\
               \end{array}
             \right)\left(
                      \begin{array}{c}
                        \zeta^{+t} \\
                        d\mu \\
                      \end{array}
                    \right),\]
where $\phi=\phi_s+C$. In this context,
\[J_-^{t*}\left(
       \begin{array}{c}
         \zeta^{-t} \\
         d\mu \\
       \end{array}
     \right):=-\Omega J_+^t\Omega^{-1}\left(
       \begin{array}{c}
         \zeta^{-t} \\
         d\mu \\
       \end{array}
     \right)=\left(
               \begin{array}{cc}
                 0 & \phi \\
                 -\phi^{-1} & 0 \\
               \end{array}
             \right)\left(
                      \begin{array}{c}
                        \zeta^{-t} \\
                        d\mu \\
                      \end{array}
                    \right).\]
\begin{theorem}Let $I_0\in K_\Omega^{\mathbb{T}}(M)$, $C, F$ be two $n\times n$ anti-symmetric constant matrices such that $F\in \mathcal{A}_\tau$, and $J_+^t\in GK_\Omega^{\mathbb{T}}(\mathring{M})$ defined as above. Then $J_+^t\in GK_\Omega^{\mathbb{T}}(M)$ for each $t\in [0, 1]$.\label{cons}
\end{theorem}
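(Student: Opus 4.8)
The plan is to verify the criterion of Lemma~\ref{c1} for each $J_+^t$, taking the anti-diagonal structure $J_+^0$ (together with the given Kähler structure $I_0$) as a reference that is already known to be globally smooth.

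\emph{Step 1: existence on $\mathring{M}$ and the available smooth data.} By the preceding Proposition, $\mathcal{A}_\tau$ is a convex cone with vertex $0$, so $tF\in\mathcal{A}_\tau$ for every $t\in[0,1]$; equivalently $\Xi_t:=\phi_s+\frac{1}{4}(tF)(\phi_s)^{-1}(tF)$ is positive-definite on $\mathring{\Delta}$. Since $\phi=\phi_s+C$ with $\phi_s=\textup{Hess}(\tau)$, Theorem~\ref{GKG} applied to the triple $(\zeta^{+t},\tau,C)$ gives $J_+^t\in GK_\Omega^{\mathbb{T}}(\mathring{M})$ for each $t\in[0,1]$. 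At $t=0$ the three connections coincide, $J_+^0$ is the anti-diagonal toric GK structure of symplectic type determined by $(\tau,C)$, and by the anti-diagonal construction of \cite{Wang2} it extends to an element of $DGK_\Omega^{\mathbb{T}}(M)$; thus $J_+^0$ and $I_0$ are honest smooth structures on $M$. Contracting these structures with $\Omega$ exactly as in the proof of Theorem~\ref{M1} shows that the matrix-valued functions $(\phi_s)^{-1}$, $\phi^{-1}$ and $\phi^{-1}C$ (equivalently $\phi^{-1}\phi^T=\textup{I}-2\phi^{-1}C$) already extend smoothly to all of $M$.

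\emph{Step 2: global form of the data of $J_+^t$.} Next I would write the biHermitian tensors $g_t+b_t$ and $[(J_+^t+J_-^t)/2]^{-1}$ of $J_+^t$ in the globally defined coframe $\{J_+^{0*}d\mu,\ d\mu\}$ and frame $\{\partial_\theta,\ J_+^0\partial_\theta\}$ of the reference structures, passing from $\{\zeta^{+t},d\mu\}$ via $\zeta^{+t}=\zeta-\frac{t}{2}Fd\mu$ and $\zeta=-\phi^T J_+^{0*}d\mu$, just as in the proof of Theorem~\ref{M1}. Reorganizing the resulting matrix entries (rational in $\phi,\phi_s,F,t$) exhibits $J_+^t$ as obtained from $J_+^0$ by a purely tangent transform followed by a B-transform by a global two-form on $M$, in the spirit of Theorem~\ref{Symm}; this is the global transformation formula~(\ref{tran}) alluded to in this section. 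Using the identities
\[\Xi_t^{-1}\phi_s=\textup{I}-\tfrac{t^2}{4}\Xi_t^{-1}F(\phi_s)^{-1}F,\qquad \phi_s\Xi_t^{-1}\phi_s-\phi_s=-\tfrac{t^2}{4}F(\phi_s)^{-1}F\,\Xi_t^{-1}\phi_s,\]
together with products of the functions from Step~1, the global smoothness of these two tensors reduces — exactly as in Theorem~\ref{M1} — to that of the short list $(\phi_s)^{-1},\ \phi^{-1},\ \phi^{-1}C,\ \Xi_t^{-1}$.

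\emph{Step 3: smoothness of $\Xi_t^{-1}$, and conclusion.} The first three entries are settled in Step~1. For the last, observe that
\[(\phi_s)^{-1}\Xi_t=\textup{I}+\tfrac{t^2}{4}\bigl[(\phi_s)^{-1}F\bigr]^2,\]
a polynomial in the matrix $(\phi_s)^{-1}F$, which is smooth on $M$ since $(\phi_s)^{-1}$ is and $F$ is constant; hence $(\phi_s)^{-1}\Xi_t$ extends smoothly to $M$. On $\mathring{M}$ it is conjugate to $\textup{I}+\frac{t^2}{4}[(\phi_s)^{-1/2}F(\phi_s)^{-1/2}]^2$, which by the hypothesis $tF\in\mathcal{A}_\tau$ is positive-definite on $\Delta$, hence bounded below by $\delta\,\textup{I}>0$ by compactness of $\Delta$; so $\det\!\bigl((\phi_s)^{-1}\Xi_t\bigr)\geq\delta^n>0$ on $\mathring{M}$ and, by continuity, on all of $M$. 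Thus $(\phi_s)^{-1}\Xi_t$ is smooth and invertible on $M$, whence $\Xi_t^{-1}\phi_s=[(\phi_s)^{-1}\Xi_t]^{-1}$ and then $\Xi_t^{-1}=(\Xi_t^{-1}\phi_s)(\phi_s)^{-1}$ are smooth on $M$. With the short list verified, Lemma~\ref{c1} yields $J_+^t\in GK_\Omega^{\mathbb{T}}(M)$ for all $t\in[0,1]$; the case $t=1$ is the asserted statement.

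\emph{Main obstacle.} The delicate part is Step~2: expressing the biHermitian data of $J_+^t$ in a frame with global meaning and organizing the resulting matrices so that global smoothness collapses onto the short list — in effect deriving the formula~(\ref{tran}). Everything else is either structure theory already in place (\cite{Wang2}, Theorems~\ref{GKG} and~\ref{M1}, Lemma~\ref{c1}) or the short determinant-and-continuity argument of Step~3, which is the only place where the positive-definiteness hypothesis $F\in\mathcal{A}_\tau$ is genuinely used.
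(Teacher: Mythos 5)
Your proposal is correct and follows essentially the same route as the paper: smoothness of $J_\pm^t$ on $M$ via the global transformation formula~(\ref{tran}) coming from the purely tangent map $\mathcal{F}_t=\textup{Id}-\frac{t}{2}\Omega^{-1}\hat{F}$ applied to the globally defined anti-diagonal structure determined by $(\tau,C)$, then Lemma~\ref{c1} reducing everything to the global smoothness of $(J_+^t+J_-^t)^{-1}$ written in a reference frame, which collapses onto $\Xi_t^{-1}\phi_s$ (equivalently $\Xi_t^{-1}$), settled by the identity $(\phi_s)^{-1}\Xi_t=\textup{I}+\frac{t^2}{4}[(\phi_s)^{-1}F]^2$ together with the determinant-and-continuity argument using $F\in\mathcal{A}_\tau$. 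The only cosmetic difference is your choice of $J_+^0$ rather than $I_0$ as the reference frame and the slightly loose description of~(\ref{tran}) as involving a B-transform (it is a pure conjugation by $\mathcal{F}_t$), neither of which affects the argument.
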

\begin{proof}To see $J_+^t$ is smooth on $M$, we can resort to another global description of $J_+^t$ and its symplectic adjoint $J_-^t$. First we can define another complex structure $I_+$ as follows:
\[I_+^*\left(
       \begin{array}{c}
         \zeta \\
         d\mu \\
       \end{array}
     \right)=\left(
               \begin{array}{cc}
                 0 & \phi^T \\
                 -(\phi^T)^{-1} & 0 \\
               \end{array}
             \right)\left(
                      \begin{array}{c}
                        \zeta \\
                        d\mu \\
                      \end{array}
                    \right).\]
Then due to \cite[Thm.~4.11]{Wang2}, $I_+$ is globally well-defined on $M$ and in particular $I_+\in DGK_\Omega^{\mathbb{T}}(M)$.

Define a map $\mathcal{F}_t: TM\rightarrow TM$ by
\[\mathcal{F}_t=\textup{Id}-\frac{t}{2}\Omega^{-1}\hat{F},\]
where $\hat{F}=1/2\sum_{j,k}F_{kj}d\mu_j\wedge d\mu_k$. This map $\mathcal{F}_t$ is smoothly well-defined on $M$. Then we have
\[\mathcal{F}_t^*\left(
                   \begin{array}{c}
                     \zeta \\
                     d\mu \\
                   \end{array}
                 \right)=\left(
                           \begin{array}{cc}
                             \textup{I} & -\frac{t}{2}F \\
                             0 & \textup{I} \\
                           \end{array}
                         \right)\left(
                   \begin{array}{c}
                     \zeta \\
                     d\mu \\
                   \end{array}
                 \right)=\left(
       \begin{array}{c}
         \zeta^{+t} \\
         d\mu \\
       \end{array}
     \right)
\]
and consequently
\begin{equation}J_+^{t*}=\mathcal{F}_t^*I_+^*(\mathcal{F}_t^*)^{-1},\quad J_-^{t*}=(\mathcal{F}_t^*)^{-1}I_-^*\mathcal{F}_t^*\label{tran}\end{equation}
where $I_-$ is the symplectic adjoint of $I_+$. This shows that $J_\pm^t$ are both smooth on $M$.

To see $J_+^t\in GK_\Omega^{\mathbb{T}}(M)$ for each $t\in[0, 1]$, by Lemma.~\ref{c1}, we only need to prove the global smoothness of $(J_+^t+J_-^t)^{-1}$. We shall adopt a similar strategy to that of the proof of Thm.~\ref{M1}. This time we choose the toric K$\ddot{a}$hler structure $I_0$ as the reference.

Let $\theta_j, \mu_j$ be the admissible coordinates associated to $I_0$. Then in the frame $\{\partial_\theta, \partial_\mu\}$, $(J_+^t+J_-^t)/2$ has the following matrix form:
\[\frac{J_+^t+J_-^t}{2}\sim \left(
                          \begin{array}{cc}
                            \frac{t}{2}(\phi^{-1})_aF & -(\phi^{-1})_s \\
                          \phi_s+\frac{t^2}{4}F(\phi^{-1})_sF & -\frac{t}{2}F(\phi^{-1})_a \\
                          \end{array}
                        \right),\]
and consequently (see Eq.~(\ref{inver}))
\begin{eqnarray*}(\frac{J_+^t+J_-^t}{2})^{-1}&\sim &\left(
                           \begin{array}{cc}
                             \textup{I} & 0 \\
                             \frac{tF}{2} & \textup{I} \\
                           \end{array}
                         \right)\left(
    \begin{array}{cc}
      \frac{t}{2}\Xi_t^{-1}F(\phi_s)^{-1}\phi & \Xi_t^{-1} \\
      (-\phi^T+\frac{t^2}{2}AF\Xi_t^{-1}F)(\phi_s)^{-1}\phi & tAF\Xi_t^{-1} \\
    \end{array}
  \right)\left(
                           \begin{array}{cc}
                             \textup{I} & 0 \\
                             -\frac{tF}{2} & \textup{I} \\
                           \end{array}
                         \right)\\
  &=&\left(
       \begin{array}{cc}
         \frac{t}{2}\Xi_t^{-1}FB & \Xi_t^{-1} \\
         -\phi^T(\phi_s)^{-1}\phi+\frac{t^2}{4}B^TF\Xi_t^{-1}FB & \frac{t}{2}B^TF\Xi_t^{-1} \\
       \end{array}
     \right),
                          \end{eqnarray*}
                          where $\Xi_t=\phi_s+\frac{t^2}{4}F(\phi_s)^{-1}F$, $A=\frac{\phi^T(\phi_s)^{-1}}{2}-\textup{I}$, $B=(\phi_s)^{-1}\phi_a$ and $\phi_a=C$.
Then as a tensor $(\frac{J_+^t+J_-^t}{2})^{-1}$ is of the following form:
\[\left(
  \begin{array}{cc}
    (I_0^*d\mu)^T & d\mu^T \\
  \end{array}
\right)\otimes\left(
       \begin{array}{cc}
         -\frac{t}{2}\phi_s\Xi_t^{-1}FB & \phi_s\Xi_t^{-1}\phi_s \\
         -\phi^T(\phi_s)^{-1}\phi+\frac{t^2}{4}B^TF\Xi_t^{-1}FB & -\frac{t}{2}B^TF\Xi_t^{-1}\phi_s \\
       \end{array}
     \right)\left(
              \begin{array}{c}
                \partial_\theta \\
                I_0\partial_\theta \\
              \end{array}
            \right).\]
Similarly, the tensor $(\frac{I_0+I_0^\Omega}{2})^{-1}=I_0^{-1}=-I_0$ is of the form:
\[\left(
  \begin{array}{cc}
    (I_0^*d\mu)^T & d\mu^T \\
  \end{array}
\right)\otimes\left(
                \begin{array}{cc}
                  0 & \phi_s \\
                  -\phi_s & 0 \\
                \end{array}
              \right)\left(
              \begin{array}{c}
                \partial_\theta \\
                I_0\partial_\theta \\
              \end{array}
            \right).
\]

Therefore, to prove the global smoothness of $(\frac{J_+^t+J_-^t}{2})^{-1}$, we have to check the following statements:\\
\textbf{\emph{i}}) $\phi_s\Xi_t^{-1}F(\phi_s)^{-1}\phi_a$ is smooth on $M$;\\
\textbf{\emph{ii}}) $\phi_s\Xi_t^{-1}\phi_s-\phi_s$ is smooth on $M$;\\
\textbf{\emph{iii}}) $ -\phi^T(\phi_s)^{-1}\phi-\frac{t^2}{4}\phi_a(\phi_s)^{-1}F\Xi_t^{-1}F(\phi_s)^{-1}\phi_a+\phi_s$ is smooth on $M$.\\
With the fact that $(\phi_s)^{-1}$ is smooth on $M$ in mind, a careful but elementary analysis shows that we only need to check that the matrix-valued function $\Xi_t^{-1}\phi_s$ is smooth on $M$. Note that
\[(\phi_s)^{-1}\Xi_t=\textup{I}+\frac{t^2}{4}[(\phi_s)^{-1}F]^2\]
is smooth on $M$. Thus to complete the proof of the theorem, it suffices to prove that $(\phi_s)^{-1}\Xi_t$ is also non-degenerate on $M\backslash\mathring{M}$. Note that on $M$ we have
\begin{eqnarray*}\det((\phi_s)^{-1}\Xi_t)&=&\det[(\phi_s)^{-1/2}\Xi_t(\phi_s)^{-1/2}]=\det[\textup{I}+\frac{t^2}{4}((\phi_s)^{-1/2}F(\phi_s)^{-1/2})^2]\\
&\geq&\det[\textup{I}+\frac{1}{4}((\phi_s)^{-1/2}F(\phi_s)^{-1/2})^2]>0, \end{eqnarray*}
where the condition (\ref{posi1}) is used. This completes the proof.
\end{proof}
\emph{Remark}. i) The map $\mathcal{F}_t$ in the proof can also be adapted to give another simpler proof of the global smoothness of $I_\pm$ in Thm.~\ref{M1}. ii) Note that at a fixed point of the $\mathbb{T}$-action, $d\mu=0$ and consequently the map $\mathcal{F}_t$ is the identity map there. Thus at a fixed point, the type of $\mathbb{J}_1^t$ associated to $J_+^t$ is always the same as that of the GC structure associated to $I_+$. Since $I_+\in  DGK_{\Omega}^{\mathbb{T}}(M)$, $\mathbb{J}_1^t$ should be of complex type at those fixed points due to the theory developed in \cite{Wang2}, just as was observed before from other viewpoints.
\begin{example}\label{ex1}Let us analyse the case $n=2$ in some detail. If
\[\phi_s=\left(
           \begin{array}{cc}
             \tau_{11} & \tau_{12} \\
             \tau_{12} & \tau_{22} \\
           \end{array}
         \right),\quad F=\left(
                           \begin{array}{cc}
                             0 & f \\
                             -f & 0 \\
                           \end{array}
                         \right),\quad C=\left(
                                         \begin{array}{cc}
                                           0 & c \\
                                           -c & 0 \\
                                         \end{array}
                                       \right),
\]
where $\phi_s$ is the Hessian of the symplectic potential $\tau$ of $I_0\in K_\Omega^{\mathbb{T}}(M)$, then the condition (\ref{posi}) amounts to that
\[\Xi=\phi_s+\frac{1}{4}F(\phi_s)^{-1}F=\phi_s(1-\frac{f^2}{4\det \phi_s})\]
is positive-definite on $\mathring{M}$ or equivalently $1-\frac{f^2}{4\det \phi_s}>0$ on $\mathring{M}$. From the Abreu-Guillemin theory, we know that $1/\det \phi_s$ is smooth on $M$ and in particular vanishes on $M\backslash\mathring{M}$. Therefore, the condition (\ref{posi}) actually implies that $1-\frac{f^2}{4\det \phi_s}>0$ holds on the whole of $M$ and thus
\[(\phi_s)^{-1}\Xi=(1-\frac{f^2}{4\det \phi_s})\times \textup{I}\]
is non-degenerate on $M$. Then in this dimension, the seemingly weaker condition (\ref{posi}) is enough for the validity of the conclusion of Thm.~\ref{cons}.

Let $m$ be the maximum of $1/\det \phi_s$ on $\Delta$. Then the condition $1-\frac{f^2}{4\det \phi_s}>0$ is equivalent to \[f\in (-\frac{2}{\sqrt{m}}, \frac{2}{\sqrt{m}})\cong\mathcal{A}_\tau.\]

If $c^2+f^2\neq 0$, then we can find the 2-form $b=-1/2\Omega(J_+-J_-)$ satisfies
\[-\det\phi\times b^2/2=(c^2+f^2/4)d\theta_1d\theta_2d\mu_1d\mu_2.\]
Consequently, points in $\mathring{M}$ are all regular for $\mathbb{J}_1$, in perfect agreement with the general result of Prop.~\ref{reg}.
\end{example}

 It is known that on a compact toric symplectic manifold $(M,\Omega, \mathbb{T}, \mu)$, the space of compatible toric K$\ddot{a}$hler structures is, modulo the action of $\mathbb{T}$-equivariant symplecomorphisms, a space $\mathrm{K}$ of continuous functions $\tau$ (symplectic potentials) on the moment polytope $\Delta$ satisfying the following two conditions \cite[Prop.~5]{Ap}:\\
i) The restriction of $\tau$ to any open face of $\Delta$ is a smooth strictly convex function;\\
ii) $\tau-\tau_0$ is smooth on $\Delta$, where $\tau_0$ is the function given by (\ref{formu}).

According to our results up to now, we have the following theorem.
\begin{theorem}For a given compact toric symplectic manifold $(M,\Omega, \mathbb{T}, \mu)$, the space $\mathcal{GK}$ of toric GK structures of symplectic type modulo the action of $\mathbb{T}$-equivariant symplectomorphisms, is the set
\[\mathcal{GK}_\Delta:=\{(\tau, F, C)|\tau\in \mathrm{K}, F\in \mathcal{A}_\tau, C\in \mathcal{A}_n\}.\]
\end{theorem}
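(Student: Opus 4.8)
The goal is to produce a bijection between $\mathcal{GK}$ and $\mathcal{GK}_\Delta$, obtained by pairing the ``forward'' assignment of a triple to a toric GK structure (Thm.~\ref{M1} and Cor.~\ref{cano}) with the ``backward'' construction of Thm.~\ref{cons}, and then checking the two maps are mutually inverse after passing to the quotient by $\mathbb{T}$-equivariant symplectomorphisms.

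\emph{The forward map.} Given $J_+\in GK_\Omega^{\mathbb{T}}(M)$, Thm.~\ref{GK} attaches to it the constant anti-symmetric matrices $C,F$ and the strictly convex function $\tau$ on $\mathring{\Delta}$, while Cor.~\ref{cano} identifies $\tau$ with the symplectic potential of the canonical toric K\"ahler structure $I_0$. By the description of the space $\mathrm{K}$ recalled above (conditions i)--ii), from \cite{Ap}) this gives $\tau\in\mathrm{K}$ --- the substantive content being that $\tau-\tau_0$ extends smoothly across $\partial\Delta$, which is exactly what the compactification argument of \S~\ref{Com} provides, $\tau_0$ being the function (\ref{formu}). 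The computation recalled at the start of \S~\ref{CON} shows that $F$ obeys (\ref{posi1}), i.e. $F\in\mathcal{A}_\tau$, and $C\in\mathcal{A}_n$ is immediate. So $J_+\mapsto(\tau,F,C)\in\mathcal{GK}_\Delta$. This descends to the quotient: a $\mathbb{T}$-equivariant symplectomorphism acts trivially on $\mathfrak{t}$, hence preserves $C$ and $F$ (matrices relative to the fixed basis $\{e_j\}$), and it carries $I_0$ to an equivalent toric K\"ahler structure, hence fixes the class of $\tau$ in $\mathrm{K}$.

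\emph{The backward map and mutual inverses.} Given $(\tau,F,C)\in\mathcal{GK}_\Delta$, \cite{Ap} furnishes $I_0\in K_\Omega^{\mathbb{T}}(M)$ with symplectic potential $\tau$, well-defined up to equivariant symplectomorphism; since $F\in\mathcal{A}_\tau$ we have (\ref{posi1}), hence (\ref{posi}), so Thm.~\ref{cons} produces $J_+:=J_+^1\in GK_\Omega^{\mathbb{T}}(M)$. The two maps are mutually inverse: running $\mathcal{GK}_\Delta\to\mathcal{GK}\to\mathcal{GK}_\Delta$, the structure $J_+^1$ of Thm.~\ref{cons} has $\phi=\phi_s+C$ with $\phi_s$ the Hessian of $\tau$ and Darboux connection $\zeta=(\zeta^{+1}+\zeta^{-1})/2$ equal to the one canonically attached to $I_0$, so the triple read off from it as in Thm.~\ref{GK} and Cor.~\ref{cano} is again $(\tau,F,C)$; running $\mathcal{GK}\to\mathcal{GK}_\Delta\to\mathcal{GK}$, Thm.~\ref{cons} rebuilds exactly the linear-algebraic datum (\ref{J+}) in the frame $\{\zeta^+,d\mu\}$ that characterizes $J_+$ (Remark after Cor.~\ref{cano}), so one recovers $J_+$ up to the equivariant symplectomorphism identifying the abstract $I_0$ of \cite{Ap} with the canonical $I_0$ of $J_+$.

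\emph{The main obstacle.} The only genuinely delicate point --- the place the argument must be written with care, exactly as in the K\"ahler case of \cite{Ap} --- is the compatibility of the two quotients: one must check that the ambiguity in the Abreu--Guillemin angle/Darboux coordinates underlying $\tau$ is precisely the ambiguity realized by $\mathbb{T}$-equivariant symplectomorphisms, that these symplectomorphisms leave $C$ and $F$ untouched, and consequently that $(\tau,F,C)$ is a complete invariant with no further identifications forced among the triples. Everything else is bookkeeping already contained in Thms.~\ref{GK}, \ref{M1}, \ref{cons} and Cor.~\ref{cano}, so no new computation is needed.
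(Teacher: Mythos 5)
Your proposal is correct and follows essentially the same route as the paper: both reduce the statement to the already-established forward characterization (Thm.~\ref{GK}, Thm.~\ref{M1}, Cor.~\ref{cano}, condition (\ref{posi1})) and the backward construction of Thm.~\ref{cons}, and both resolve the quotient issue by invoking the classification of toric K\"ahler structures modulo $\mathbb{T}$-equivariant symplectomorphisms from \cite{Ap} together with the observation that such a symplectomorphism transports the data $C,F$ (which the paper phrases as transporting the associated $I_0$-holomorphic Poisson structures, while you phrase it as preservation of the constant matrices in the fixed basis). Your write-up is somewhat more explicit about surjectivity and the mutual-inverse check than the paper's terse injectivity argument, but no genuinely different idea is involved.
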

\begin{proof}Note that from our previous results, the underlying matrices $F, C$ associated to an element $J_+\in GK_\Omega^{\mathbb{T}}(M)$ can be viewed as obtained from two holomorphic Poisson structures w.r.t. the canonically associated K$\ddot{a}$hler structure $I_0$. If two elements in $GK_\Omega^\mathbb{T}(M)$ have the same triple $(\tau, F, C)$ to characterize them, then the canonical underlying K$\ddot{a}$hler structures $I_0, I_0'$ are related by a $\mathbb{T}$-equivariant symplectomorphism $\Phi$. $\Phi$, as a holomorphic isomorphism between $(M, I_0)$ and $(M, I_0')$, also transforms canonically the $I_0$-holomorphic Poisson structures associated to $F, C$ to the $I_0'$-holomorphic Poisson structures associated to $F, C$.
\end{proof}
For later use, let us collect the matrix forms of several geometric structures associated to $J_+\in GK_\Omega^{\mathbb{T}}(M)$ in the frame $\{\zeta, d\mu\}$ (we only consider $t=1$ in the construction of Thm.~\ref{cons}):
\[J_+\sim \left(
            \begin{array}{cc}
              \phi^{-1}F/2 & -\phi^{-1} \\
              \phi+1/4F\phi^{-1}F & -F\phi^{-1}/2 \\
            \end{array}
          \right),\quad J_-\sim \left(
            \begin{array}{cc}
              -(\phi^T)^{-1}F/2& -(\phi^T)^{-1} \\
              \phi^T+1/4F(\phi^T)^{-1}F & F(\phi^T)^{-1}/2 \\
            \end{array}
          \right),\]
\[g\sim\left(
         \begin{array}{cc}
           (\phi^{-1})_s & (\phi^{-1})_aF/2 \\
            F(\phi^{-1})_a/2& \phi_s+\frac{1}{4}F(\phi^{-1})_sF \\
         \end{array}
       \right),\quad b\sim \left(
                             \begin{array}{cc}
                               (\phi^{-1})_a & (\phi^{-1})_sF/2 \\
                               F(\phi^{-1})_s/2 & \phi_a+\frac{1}{4}F(\phi^{-1})_aF \\
                             \end{array}
                           \right).
\]

\section{An explicit example on $\mathbb{C}P^1\times\mathbb{C}P^1$}\label{Ex}
In this section, to demonstrate the general theory we have developed we shall construct toric GK structures of symplectic type on the ruled surface $M=\mathbb{C}P^1\times\mathbb{C}P^1$.

Let $M$ be equipped with the symplectic structure
\[\Omega=\frac{\sqrt{-1}}{2}\frac{dz_1\wedge d\bar{z}_1}{(1+|z_1|^2)^2}+\frac{\sqrt{-1}}{2}\frac{dz_2\wedge d\bar{z}_2}{(1+|z_2|^2)^2}.\]
The standard $\mathbb{T}^2$-action
\[(e^{\sqrt{-1}\theta_1},e^{\sqrt{-1}\theta_2})\cdot ([1:z_1], [1:z_2])=([1: e^{\sqrt{-1}\theta_1}z_1], [1:e^{\sqrt{-1}\theta_2}z_2])\]
on $M$ is Hamiltonian relative to $\Omega$. The infinitesimal action is then given by
\[\partial_{\theta_j}=\sqrt{-1}(z_j\partial_{z_j}-\bar{z}_j\partial_{\bar{z}_j}),\quad j=1,2,\]
and the moment map for this action is chosen to be
\[\mu_j=\frac{|z_j|^2}{2(1+|z_j|^2)},\quad j=1,2.\]
The moment polytope $\Delta$ is therefore $[0,1/2]\times [0,1/2]$. Due to Guillemin's formula, the symplectic potential of the standard toric K$\ddot{a}$hler structure in this case is
\[\tau=\frac{1}{2}\sum_{j=1}^2[\mu_j\ln \mu_j+(\frac{1}{2}-\mu_j)\ln(\frac{1}{2}-\mu_j)],\]
whose Hessian $\phi_s$ is
\[\left(
    \begin{array}{cc}
      \frac{1}{4\mu_1(1/2-\mu_1)} & 0 \\
      0 & \frac{1}{4\mu_2(1/2-\mu_2)} \\
    \end{array}
  \right).
\]
Let
\[C=\left(
      \begin{array}{cc}
        0 & c \\
        -c & 0 \\
      \end{array}
    \right),\quad F=\left(
                           \begin{array}{cc}
                             0 & f \\
                             -f & 0 \\
                           \end{array}
                         \right),
\]
where $f\neq 0$ (the case $f=0$ was investigated in \cite{Wang2}). By Example~\ref{ex1}, for the triple $(\tau, C, F)$ to determine a toric GK structure of symplectic type, $f$ must satisfy
\[1-4f^2\mu_1\mu_2(1/2-\mu_1)(1/2-\mu_2)>0,\quad (\mu_1, \mu_2)\in \Delta.\]
The function $1/\det \phi_s=16\mu_1\mu_2(1/2-\mu_1)(1/2-\mu_2)$ takes its maximum $1/16$ when $\mu_1=\mu_2=1/4$. We thus find that $f$ must lie in the open interval $(-8, 8)$.

Now let \[\phi=\phi_s+C=\left(
    \begin{array}{cc}
      \frac{1}{4\mu_1(1/2-\mu_1)} & c \\
      -c & \frac{1}{4\mu_2(1/2-\mu_2)} \\
    \end{array}
  \right),\]
 and consequently
  \[\phi^{-1}=\frac{1}{\det \phi}\left(
                                   \begin{array}{cc}
                                     \frac{1}{4\mu_2(1/2-\mu_2)} & -c \\
                                     c &  \frac{1}{4\mu_1(1/2-\mu_1)} \\
                                   \end{array}
                                 \right),
  \]
  where $\det \phi=\frac{1}{16\mu_1(1/2-\mu_1)\mu_2(1/2-\mu_2)}+c^2$. For later convenience, we introduce some notation:
  \[p:=\frac{1}{16\mu_1(1/2-\mu_1)\mu_2(1/2-\mu_2)},\quad \varrho_j=dz_j/z_j,\quad j=1,2.\]
Note that in the present setting, in the admissible coordinates $\theta, \mu$, the matrix form of $g$ is
\[\frac{1}{\det \phi}\left(
         \begin{array}{cccc}
           \frac{1}{4 \mu_2(1/2-\mu_2)} & 0 & \frac{cf}{2} & 0 \\
           0 &  \frac{1}{4 \mu_1(1/2-\mu_1)} & 0 & \frac{cf}{2} \\
           \frac{cf}{2} & 0 & \frac{\det \phi-\frac{f^2}{4}}{4\mu_1(1/2-\mu_1)} & 0 \\
           0 & \frac{cf}{2} & 0 & \frac{\det \phi-\frac{f^2}{4}}{4\mu_2(1/2-\mu_2)} \\
         \end{array}
       \right).
\]
Similarly, the matrix form of $b$ is
\[\frac{1}{\det \phi}\left(
    \begin{array}{cccc}
      0 & -c & 0 & \frac{f}{8\mu_2(1/2-\mu_2)} \\
      c & 0 & -\frac{f}{8\mu_1(1/2-\mu_1)} & 0 \\
      0 & \frac{f}{8\mu_1(1/2-\mu_1)} & 0 & c(\det\phi+\frac{f^2}{4}) \\
      -\frac{f}{8\mu_2(1/2-\mu_2)} & 0 & -c(\det\phi+\frac{f^2}{4}) & 0 \\
    \end{array}
  \right),
\]
or
\begin{eqnarray*}b&=&\frac{1}{\det \phi}\times[-cd\theta_1d\theta_2+\frac{fd\theta_1d\mu_2}{8\mu_2(1/2-\mu_2)}-\frac{fd\theta_2d\mu_1}{8\mu_1(1/2-\mu_1)}\\&+&c(\det\phi
+\frac{f^2}{4})d\mu_1d\mu_2].\end{eqnarray*}

As was noted in Example~\ref{ex1}, on $\mathring{M}$ $\mathbb{J}_1$ is as well of symplectic type and in particular its pure spinor\footnote{We won't review the spinor description of GC structures here. For this see \cite{Gu0}} is $e^{b'-\sqrt{-1}Q}$, where $b'$ is a real 2-form and $Q$ is a symplectic form (the inverse of $\beta_1$). It can be found that
\[Q=-g(\frac{J_+-J_-}{2})^{-1},\quad b'=-\frac{1}{2}Q(J_++J_-).\]
The matrix of $[(J_+-J_-)/2]^{-1}$ is
\[-\frac{1}{c^2+f^2/4}\times\left(
    \begin{array}{cccc}
      0 & \frac{f}{8\mu_2(1/2-\mu_2)} & 0 & c \\
      -\frac{f}{8\mu_1(1/2-\mu_1)} & 0 & -c & 0 \\
      0 & c(\det\phi+\frac{f^2}{4}) & 0 & -\frac{f}{8\mu_1(1/2-\mu_1)} \\
      -c(\det\phi+\frac{f^2}{4}) & 0 & \frac{f}{8\mu_2(1/2-\mu_2)} & 0 \\
    \end{array}
  \right).
\]
We can obtain the two 2-forms $Q$ and $b'$:
\begin{eqnarray*}Q&=&\frac{1}{c^2+f^2/4}[\frac{f}{2}d\theta_1d\theta_2+\frac{cd\theta_1d\mu_2}{4\mu_2(1/2-\mu_2)}-\frac{cd\theta_2d\mu_1}{4\mu_1(1/2-\mu_1)}\\
&+&\frac{fc^2+f^3/4-pf}{2}d\mu_1d\mu_2],\end{eqnarray*}
\begin{eqnarray*}
b'&=&-(\frac{f}{c^2+f^2/4}-\frac{f}{\det \phi})[\frac{d\theta_1d\mu_2}{8\mu_2(1/2-\mu_2)}-\frac{d\theta_2d\mu_1}{8\mu_1(1/2-\mu_1)}]\\
&+&(\frac{c}{c^2+f^2/4}-\frac{c}{\det \phi})d\theta_1d\theta_2+(-\frac{cp}{c^2+f^2/4}+\frac{cf^2}{4\det \phi})d\mu_1d\mu_2.
\end{eqnarray*}

Finally, let us have a look at the symmetric case, i.e. $c=0$. Note that
\[d\theta_j=-\frac{\sqrt{-1}}{2}(\varrho_j-\bar{\varrho}_j),\quad d\mu_j=\frac{|z_j|^2}{2(1+|z_j|^2)^2}(\varrho_j+\bar{\varrho}_j).\]
By using these formulae, we can find that in terms of the Euclidean coordinates
\[b'-b-\sqrt{-1}Q=\frac{2\sqrt{-1}dz_1\wedge dz_2}{fz_1z_2}-\frac{\sqrt{-1}fd|z_1|^2\wedge d|z_2|^2}{8[(1+|z_1|^2)(1+|z_2|^2)]^2}.\]
Note that the first term of the right hand side corresponds to the $I_0$-holomorphic Poisson structure $\frac{f\sqrt{-1}}{2}z_1z_2\partial_{z_1}\wedge \partial_{z_2}$ while the second term seems to represent the effect of the purely tangent transform.

\section{Generalized Delzant construction}
In \S~\ref{CON}, to construct a toric GK structure, we shall start with a compact toric K$\ddot{a}$hler manifold. For each Delzant polytope $\Delta$, a canonical choice is Delzant's toric K$\ddot{a}$hler manifold $M_\Delta$, which is the K$\ddot{a}$hler reduction of $\mathbb{C}^d$ by a torus action, where $d$ is the number of faces of $\Delta$ of codimension 1 \cite{Del}. If we start with this $M_\Delta$, a nontrivial anti-symmetric matrix $C$ and $F=0$, the toric GK structure thus constructed was similarly interpreted in \cite{Wang2} as obtained from GK reduction of a toric GK structure on $\mathbb{C}^d$ by a torus action. The basic goal of this section is, to some extent, to generalize this result to the general case.

As a first step, we shall apply the construction of Thm.~\ref{cons} to the non-compact manifold $\mathbb{C}^d$ with its standard toric K$\ddot{a}$hle structure. If only $C$ is turned on, this works fairly well and was realized in \cite{Wang2}. If $F$ is also turned on, the situation becomes very subtle. Actually we cannot expect to construct a toric GK structure on the whole of $\mathbb{C}^d$ in the manner of Thm.~\ref{cons}.

Let us describe the standard toric K$\ddot{a}$hler structure on $\mathbb{C}^d$ in the spirit of Abreu-Guillemin theory. $\mathbb{C}^d$ is equipped with the standard symplectic form
\[\Omega'=\frac{\sqrt{-1}}{2}\sum_{j=1}^ddz_j\wedge d\bar{z}_j\]
and the as well standard action of a $d$-dimensional torus $\mathbb{T}^d$:
\[(e^{\sqrt{-1}\theta_1},\cdots,e^{\sqrt{-1}\theta_d})\cdot (z_1,\cdots, z_d)=(e^{\sqrt{-1}\theta_1}z_1,\cdots,e^{\sqrt{-1}\theta_d}z_d).\]
The infinitesimal action is generated by
\[\partial_{\theta_j}=\sqrt{-1}(z_j\partial_{z_j}-\bar{z}_j\partial_{\bar{z}_j}),\quad j=1,2,\cdots, d.\]
This action is Hamiltonian with a moment map $\nu: \mathbb{C}^d\rightarrow (\mathbb{R}^d)^*$, i.e.,
\[\nu(z_1,\cdots, z_d)=\frac{1}{2}(|z_1|^2+2\lambda_1, |z_2|^2+2\lambda_2,\cdots, |z_d|^2+2\lambda_d),\]
or
\[\nu_j=\frac{1}{2}|z_j|^2+\lambda_j,\quad j=1,2,\cdots, d,\]
where $\lambda_j$ are real numbers to be determined by a Delzant polytope (see below).
In terms of admissible coordinates $\theta, \nu$, the metric on $\mathbb{C}^d$ is of the following form:
\[g_0=\sum_{j=1}^d(|z_j|^2(d\theta_j)^2+\frac{(d\nu_j)^2}{|z_j|^2}).\]
Thus the canonical K$\ddot{a}$hler structure is described by the diagonal matrix
\[\phi'_s=\textup{Diag}\{1/|z_1|^2, 1/|z_2|^2,\cdots, 1/|z_d|^2\}.\]
and the corresponding symplectic potential is
\[\tau'=\frac{1}{2}\sum_{j=1}^d(\nu_j-\lambda_j)\ln(\nu_j-\lambda_j).\]

Now to understand the situation we are facing properly, let us see what happens to the case of $d=2$.
\begin{example}\label{ex2}Let us consider $d=2$ and fix \[
C=\left(
    \begin{array}{cc}
      0 & c \\
      -c & 0 \\
    \end{array}
  \right),\quad
F=\left(
                                                 \begin{array}{cc}
                                                   0 & f \\
                                                   -f & 0 \\
                                                 \end{array}
                                               \right),\quad f\neq 0\] In this case,
\[\phi'_s=\left(
          \begin{array}{cc}
            1/|z_1|^2 & 0 \\
            0 & 1/|z_2|^2 \\
          \end{array}
        \right),\]
and
\[\Xi=\phi'_s+\frac{1}{4}F(\phi_s')^{-1}F=\left(
                                      \begin{array}{cc}
                                        \frac{1}{|z_1|^2}-\frac{f^2|z_2|^2}{4} & 0 \\
                                        0 & \frac{1}{|z_2|^2}-\frac{f^2|z_1|^2}{4} \\
                                      \end{array}
                                    \right).
\]
Obviously, $\Xi$ can not be positive-definite on the whole of $\mathbb{C}^2$. Instead, the condition (\ref{posi1}) implies what really matters is the open subset
\[\mathcal{K}_F:=\{(z_1, z_2)\in \mathbb{C}^2||z_1z_2|<\frac{2}{|f|}\}.\]
$\mathcal{K}_F$ is certainly $\mathbb{T}^2$-invariant and we can apply the construction of Thm.~\ref{cons} to $\mathcal{K}_F$.

Now on $\mathcal{K}_F\cap (\mathbb{C}^*)^2$ we have
\[\left(
    \begin{array}{c}
      \zeta^+_1 \\
      \zeta^+_2 \\
    \end{array}
  \right)=\left(
            \begin{array}{c}
              d\theta_1 \\
              d\theta_2 \\
            \end{array}
          \right)-\frac{1}{2}\left(
                               \begin{array}{cc}
                                 0 & f \\
                                 -f & 0 \\
                               \end{array}
                             \right)\left(
                                      \begin{array}{c}
                                        d\nu_1 \\
                                        d\nu_2 \\
                                      \end{array}
                                    \right)=\left(
                                              \begin{array}{c}
                                                d\theta_1-\frac{1}{2}fd\nu_2 \\
                                                d\theta_2+\frac{1}{2}fd\nu_1 \\
                                              \end{array}
                                            \right),
\]
and similarly,
\[\left(
    \begin{array}{c}
      \zeta^-_1 \\
      \zeta^-_2 \\
    \end{array}
  \right)=\left(
                                              \begin{array}{c}
                                                d\theta_1+\frac{1}{2}fd\nu_2 \\
                                                d\theta_2-\frac{1}{2}fd\nu_1 \\
                                              \end{array}
                                            \right).\]
It's straightforward to find that
\[dz_1-\frac{1}{2}(c+\frac{\sqrt{-1}f}{2})z_1d|z_2|^2,\quad dz_2+\frac{1}{2}(c+\frac{\sqrt{-1}f}{2})z_2d|z_1|^2\]
form a frame of the $J_+$-holomorphic cotangent bundle of $\mathcal{K}_F\cap (\mathbb{C}^*)^2$. Similarly,
 \[dz_1+\frac{1}{2}(c+\frac{\sqrt{-1}f}{2})z_1d|z_2|^2,\quad dz_2-\frac{1}{2}(c+\frac{\sqrt{-1}f}{2})z_2d|z_1|^2\]
 form a frame of the $J_-$-holomorphic cotangent bundle of $\mathcal{K}_F\cap (\mathbb{C}^*)^2$. The biHermitian metric $g$, as a linear map, is of the following form relative to the admissible coordinates $\theta_j, \nu_j$:
 \[\frac{1}{1+c^2|z_1z_2|^2}\times \left(
     \begin{array}{cccc}
       |z_1|^2 & 0 & cf|z_1z_2|^2 & 0 \\
       0 & |z_2|^2 & 0 & cf|z_1z_2|^2 \\
       cf|z_1z_2|^2 & 0 & \frac{1+c^2|z_1z_2|^2-\frac{1}{4}f^2|z_1z_2|^2}{|z_1|^2} & 0 \\
       0 & cf|z_1z_2|^2 & 0 & \frac{1+c^2|z_1z_2|^2-\frac{1}{4}f^2|z_1z_2|^2}{|z_2|^2} \\
     \end{array}
   \right).\]
 It's easy to find that $J_\pm$ and $g$ extend smoothly to $\mathcal{K}_F$ and on $\mathcal{K}_F\backslash(\mathcal{K}_F\cap (\mathbb{C}^*)^2)$ the complex structures $J_\pm$ degenerate into the canonical one on $\mathbb{C}^2$ and $g$ degenerates into the canonical Euclidean metric on $\mathbb{C}^2$. This shows that the GK structure we have constructed is well-defined on $\mathcal{K}_F$.
\end{example}

Let $\mathbb{C}^d$ be equipped with its standard toric K$\ddot{a}$hler structure. In general, we have
\begin{theorem}\label{del}
 Given two $d\times d$ anti-symmetric real matrices $C$ and $F$, let $\mathcal{K}_F$ be the $\mathbb{T}^d$-invariant nonempty open subset of $\mathbb{C}^d$ where the condition (\ref{posi1}) is satisfied. Then the pair $(C, F)$ gives rise to a toric GK structure of symplectic type on $\mathcal{K}_F$ in the manner of Thm.~\ref{cons}.
\end{theorem}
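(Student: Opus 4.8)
The plan is to re-run the proof of Thm.~\ref{cons} on the non-compact manifold $\mathbb{C}^d$ after restricting to the open set $\mathcal{K}_F$; the key observation is that the one genuinely delicate ingredient of that proof---the smooth extension of $(\phi_s)^{-1}$ across the ``boundary''---is free of charge for the standard toric K\"ahler structure on $\mathbb{C}^d$, where $(\phi'_s)^{-1}=\textup{Diag}\{|z_1|^2,\dots,|z_d|^2\}$ is manifestly smooth, so that the only new constraint at play is the one cutting out $\mathcal{K}_F$. Write $\phi'_s=\textup{Diag}\{1/|z_1|^2,\dots,1/|z_d|^2\}$, $\phi=\phi'_s+C$, $\Xi=\phi'_s+\frac{1}{4}F(\phi'_s)^{-1}F$, and let $\zeta$ be the flat connection of Darboux type on $(\mathbb{C}^*)^d$ (a trivial $\mathbb{T}^d$-bundle over the interior of the moment image $\nu(\mathbb{C}^d)$) associated to the standard complex structure. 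Imitating Thm.~\ref{cons} with $t=1$, I would put $\zeta^\pm=\zeta\mp\frac{1}{2}F\,d\nu$ and define $J_+$ on $\mathcal{K}_F\cap(\mathbb{C}^*)^d$ by the anti-block-diagonal formula in the frame $\{\zeta^+,d\nu\}$ together with its symplectic adjoint $J_-=-\Omega' J_+(\Omega')^{-1}$. Over $(\mathbb{C}^*)^d$ the conditions (\ref{posi}) and (\ref{posi1}) are equivalent, since there $(\phi'_s)^{-1/2}\Xi(\phi'_s)^{-1/2}=\textup{I}+\frac{1}{4}[(\phi'_s)^{-1/2}F(\phi'_s)^{-1/2}]^2$; hence by Thm.~\ref{GKG} the pair $(J_+,J_-)$ is a genuine toric GK structure of symplectic type on $\mathcal{K}_F\cap(\mathbb{C}^*)^d$. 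It then remains to (a) extend $J_\pm$ smoothly across $\mathcal{K}_F\setminus(\mathbb{C}^*)^d$, and (b) verify that the associated generalized metric stays positive-definite on all of $\mathcal{K}_F$.

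For (a) I would reuse the factorization of Eq.~(\ref{tran}). Applying the $F=0$, $C$-only construction to $\mathbb{C}^d$ yields, by \cite{Wang2}, an anti-diagonal toric GK structure $I_\pm\in DGK_{\Omega'}^{\mathbb{T}^d}(\mathbb{C}^d)$ smooth on \emph{all} of $\mathbb{C}^d$ (no restriction is needed, since for $F=0$ condition (\ref{posi}) is merely positivity of $\phi'_s$, and $I_\pm$ degenerate to the standard complex structure along $\mathbb{C}^d\setminus(\mathbb{C}^*)^d$). The purely tangent map $\mathcal{F}_1=\textup{Id}-\frac{1}{2}(\Omega')^{-1}\hat F$, with $\hat F=\frac{1}{2}\sum_{j,k}F_{kj}\,d\nu_j\wedge d\nu_k$, is globally smooth on $\mathbb{C}^d$, and is a bundle automorphism because $\det\mathcal{F}_1\equiv 1$ (this smooth function equals $1$ on the dense set $(\mathbb{C}^*)^d$, where $\mathcal{F}_1^*$ is unipotent in the frame $\{\zeta,d\nu\}$). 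Then $J_+^*=\mathcal{F}_1^* I_+^*(\mathcal{F}_1^*)^{-1}$ and $J_-^*=(\mathcal{F}_1^*)^{-1}I_-^*\mathcal{F}_1^*$ hold on $(\mathbb{C}^*)^d$ by (\ref{tran}), and the right-hand sides supply the smooth extensions of $J_\pm$ to all of $\mathbb{C}^d$; integrability on $\mathcal{K}_F$ follows by continuity from $(\mathbb{C}^*)^d$.

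For (b) I would rerun the determinant argument at the end of the proof of Thm.~\ref{cons}. Since $(\phi'_s)^{-1}=\textup{Diag}\{|z_1|^2,\dots\}$ and $(\phi'_s)^{-1/2}=\textup{Diag}\{|z_1|,\dots\}$ are respectively smooth and continuous on $\mathbb{C}^d$, the analog of the hard \emph{Claim} inside the proof of Thm.~\ref{M1} is immediate, and $(\phi'_s)^{-1}\Xi=\textup{I}+\frac{1}{4}[(\phi'_s)^{-1}F]^2$ is a globally smooth matrix-valued function. Moreover, on $\mathcal{K}_F$,
\[
\det\big((\phi'_s)^{-1}\Xi\big)=\det\Big(\textup{I}+\frac{1}{4}\big[(\phi'_s)^{-1/2}F(\phi'_s)^{-1/2}\big]^2\Big)>0 ,
\]
the identity holding throughout $\mathbb{C}^d$ because both sides agree on the dense set $(\mathbb{C}^*)^d$ and are polynomial in the $|z_j|^2$, and the positivity being precisely the defining condition of $\mathcal{K}_F$. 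Thus $(\phi'_s)^{-1}\Xi$ is invertible on $\mathcal{K}_F$, so $\Xi^{-1}\phi'_s$ is smooth there, and with $(\phi'_s)^{-1}$ and $\Xi^{-1}\phi'_s$ in hand the verification of statements i)--iii) in the proof of Thm.~\ref{cons} goes through word for word and yields that $(J_++J_-)^{-1}$ extends smoothly to $\mathcal{K}_F$. Consequently $g=-\frac{1}{2}\Omega'(J_++J_-)$ is smooth, and non-degenerate via $\Omega'=-2g(J_++J_-)^{-1}$; being positive-definite on the dense subset $\mathcal{K}_F\cap(\mathbb{C}^*)^d$ and non-negative by continuity elsewhere, it is positive-definite on all of $\mathcal{K}_F$. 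Hence $J_+$ is tamed with $\Omega'$, $J_-$ is its integrable symplectic adjoint, and $(\mathcal{K}_F,\mathbb{J}_1,\mathbb{J}_2)$ carries the desired $\mathbb{T}^d$-invariant toric GK structure of symplectic type.

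The main obstacle is organizational rather than computational: Thm.~\ref{cons} cannot be invoked as a black box, because its global-smoothness machinery was set up for a compact $M$, so the argument has to be reassembled by hand---though, reassuringly, the step that was hard in the compact case (smoothness of $(\phi_s)^{-1}$ up to the boundary) here costs nothing. The two points to be careful with are, first, that $\mathcal{K}_F$ is exactly the locus on which positivity of the generalized metric survives---this is where condition (\ref{posi1}) together with the displayed determinant inequality does the real work---and, second, checking (as in Example~\ref{ex2}) that $J_\pm$ and $g$ genuinely degenerate into the standard flat data along $\mathbb{C}^d\setminus(\mathbb{C}^*)^d$, so that passing from $(\mathbb{C}^*)^d$ to $\mathcal{K}_F$ yields a smooth, not a singular, structure. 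The $\mathbb{T}^d$-invariance of all the structures, the $\mathbb{T}^d$-invariance and nonemptiness of $\mathcal{K}_F$, and the compatibility with the sub-torus structure over the faces of the moment image are all immediate, since every quantity in sight depends only on the moduli $|z_j|$.
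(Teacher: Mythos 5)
Your proposal is correct and follows essentially the same route as the paper, whose own proof simply notes that $\mathcal{K}_F$ is nonempty, that the construction works on $\mathcal{K}_F\cap(\mathbb{C}^*)^d$, and that the strategy of Thm.~\ref{cons} (the $\mathcal{F}_1$-conjugation of the $C$-only structure plus the determinant/positivity argument) adapts without essential modification, omitting the details. You have merely written out those omitted details, correctly observing that the only step that was delicate in the compact case, smoothness of $(\phi_s)^{-1}$, is trivial for the flat potential on $\mathbb{C}^d$.
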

\begin{proof}
$\mathcal{K}_F$ is nonempty because it obviously contains a neighbourhood of $0\in \mathbb{C}^d$. The construction certainly works for $\mathcal{K}_F\cap (\mathbb{C}^*)^d$. We only need to show that this construction extends smoothly to $\mathcal{K}_F$. The strategy of the proof of Thm.~\ref{cons} can be adapted without any essential modification to complete the proof. Thus we omit the details.
\end{proof}

Now let us turn to a brief review of Delzant's construction of $M_\Delta$. If the Delzant polytope $\Delta$ (of dimension $n$) is defined by
\[l_j(x):=(u_j, x)\geq \lambda_j,\quad j=1,2,\cdots, d,\]
then there is the linear map $\varsigma: \mathbb{R}^d\rightarrow \mathbb{R}^n$, $e_j\mapsto u_j$, where $\{e_j\}$ is the standard basis of $\mathbb{R}^d$. Let $\mathfrak{n}$ be the kernel of $\varsigma$. Then we have the short exact sequence
\begin{equation}0\longrightarrow \mathfrak{n}\stackrel{\iota}\longrightarrow \mathbb{R}^d \stackrel{\varsigma}\longrightarrow \mathbb{R}^n \longrightarrow0,\label{seq}\end{equation}
where each middle term should be understood as the Lie algebra of the corresponding torus and $\iota$ is the natural inclusion map. This sequence then lifts to the level of Lie groups:
\begin{equation}0\longrightarrow N \longrightarrow \mathbb{T}^d \longrightarrow \mathbb{T}^n \longrightarrow0.\label{seq2}\end{equation}
One applies K$\ddot{a}$hler reduction to the $N$-action on $\mathbb{C}^d$ and the K$\ddot{a}$hler quotient is precisely $M_\Delta$ equipped with the residual Hamiltonian $\mathbb{T}^n$-action. The symplectic potential of $M_\Delta$ is given by Guillemin's formula (\ref{formu}). 

Now let $C, F$ be two $n\times n$ constant anti-symmetric matrices such that Thm.~\ref{cons} applies. We can as well lift them to the level of $\mathbb{C}^d$ and $\mathbb{T}^d$ as follows. From (\ref{seq}) we have an induced map $\varsigma_\wedge: \wedge^2 \mathbb{R}^d\rightarrow \wedge^2 \mathbb{R}^n$. Intrinsically understood, $C$ and $F$ are skew-symmetric bilinear functions on $(\mathbb{R}^n)^*$ in which the moment map $\mu$ on $M_\Delta$ takes values, or in other words, $C$ and $F$ live in $\wedge^2 [(\mathbb{R}^n)^*]^*\cong \wedge^2 \mathfrak{t}^n$. Let $C', F'\in \wedge^2 \mathbb{R}^d$ such that $\varsigma_\wedge(C')=C$ and $\varsigma_\wedge(F')=F$. Then $C'$ and $F'$ are skew-symmetric bilinear function on $(\mathbb{R}^d)^*$ in which the moment map $\nu$ on $\mathbb{C}^d$ takes values or equivalently $C', F'\in \wedge^2 \mathfrak{t}^d$. Let $\mathcal{K}_{F'}$ be the toric GK manifold determined by the canonical toric K$\ddot{a}$hler structure on $\mathbb{C}^d$ and the pair $(C', F')$ in the manner of Thm.~\ref{cons}.
\begin{theorem} The GK quotient (in the sense of \cite{LT}) of $\mathcal{K}_{F'}$ under the Hamiltonian action of $N$ is a $\mathbb{T}^n$-invariant GK open submanifold of the toric GK manifold $M_\Delta$ whose toric GK structure is determined in Thm.~\ref{cons} by the canonical toric K$\ddot{a}$hler structure and the pair $(C, F)$. 
\end{theorem}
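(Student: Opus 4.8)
The plan is to reduce the general case to the anti-diagonal case $F'=0$, which is already treated in \cite{Wang2}, by transporting the global formula (\ref{tran}) through the reduction. On $\mathcal{K}_{F'}$ the toric GK structure furnished by Thm.~\ref{del} (equivalently Thm.~\ref{cons} applied to $\mathbb{C}^d$) is $J_+'^{\,*}=\mathcal{F}_1'^{\,*}\,I_+'^{\,*}\,(\mathcal{F}_1'^{\,*})^{-1}$, where $I_+'\in DGK_{\Omega'}^{\mathbb{T}^d}(\mathbb{C}^d)$ is the anti-diagonal structure attached to $C'$ and $\mathcal{F}_1'=\mathrm{Id}-\tfrac12(\Omega')^{-1}\widehat{F'}$ with $\widehat{F'}=\tfrac12\sum_{j,k}F'_{kj}\,d\nu_j\wedge d\nu_k$; symmetrically, on $M_\Delta$ the structure of Thm.~\ref{cons} attached to $(C,F)$ is $J_+^{\,*}=\mathcal{F}_1^{\,*}\,I_+^{\,*}\,(\mathcal{F}_1^{\,*})^{-1}$ with $I_+\in DGK_{\Omega}^{\mathbb{T}^n}(M_\Delta)$ attached to $C$ and $\mathcal{F}_1=\mathrm{Id}-\tfrac12\Omega^{-1}\widehat F$. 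Since $N\subset\mathbb{T}^d$, the whole toric GK structure on $\mathcal{K}_{F'}$ is $N$-invariant and $\mathbb{J}_2$ is a $B$-transform of $\mathbb{J}_{\Omega'}$ with the $N$-moment map $\nu_N=\iota^{*}\circ\nu$ an $\Omega'$-moment map, so GK reduction in the sense of \cite{LT} applies.

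The first step is the reduction dictionary. Writing $\pi\colon\nu_N^{-1}(0)\to M_\Delta$ for the quotient map, ordinary symplectic reduction gives $\pi^{*}\Omega$ from $\Omega'$, and the relation between the two moment maps furnished by (\ref{seq}) yields $\pi^{*}\bigl(l_j(\mu)\bigr)=\nu_j$ on $\nu_N^{-1}(0)$, hence $\pi^{*}\bigl(dl_j(\mu)\bigr)=d\nu_j$ there. Because $\varsigma_\wedge(F')=F$, this shows that the $2$-form $\widehat{F'}$ is $\pi$-basic on $\nu_N^{-1}(0)$ and descends precisely to $\widehat F$; the same computation gives that $\widehat{C'}$ descends to $\widehat C$ and that the Darboux-type flat connection $\zeta$ on $\mathbb{C}^d$ descends to the one on $M_\Delta$. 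Consequently the purely tangent map $\mathcal{F}_1'$ is $N$-invariant, preserves $\nu_N^{-1}(0)$, and descends along $\pi$ to $\mathcal{F}_1$; and by the $F=0$ case of \cite{Wang2} the GK reduction of $I_+'$ by $N$ is exactly $I_+$.

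The second step is to verify that GK reduction is compatible with conjugation by $\mathcal{F}_1'$. Since a purely tangent transform is not a Courant-algebroid automorphism, I would not appeal to functoriality but instead argue on the biHermitian side. On the open dense orbit $(\mathbb{C}^{*})^d\cap\mathcal{K}_{F'}$ in admissible coordinates $(\theta,\nu)$ the reduction merely erases the $\mathfrak n$-directions, and there formula (\ref{tran}), together with the dictionary above, shows that the reduced $J_\pm'$, $g$, $b$ of $\mathcal{K}_{F'}$ coincide with the $J_\pm$, $g$, $b$ of the $(\tau,C,F)$-structure on $M_\Delta$ produced by Thm.~\ref{cons}, $\tau$ being Guillemin's potential (\ref{formu}). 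As both sides are genuine smooth GK structures (Thm.~\ref{del} and Thm.~\ref{cons}), the equality on the dense open set extends to the whole quotient by continuity, exactly as in the proofs of Thm.~\ref{M1} and Thm.~\ref{cons}; the $\mathbb{J}_2$-part is automatic, since $\Omega'$ and the $B$-field reduce to $\Omega$ and the $B$-field of Thm.~\ref{cons}. Finally $\pi\bigl(\nu_N^{-1}(0)\cap\mathcal{K}_{F'}\bigr)$ is an open $\mathbb{T}^n$-invariant subset of $M_\Delta$ because $\mathcal{K}_{F'}$ is open and $\mathbb{T}^d$-invariant, which gives the statement.

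I expect the main obstacle to be precisely this second step: making rigorous the assertion that GK reduction ``commutes'' with the non-orthogonal transform $\mathcal{F}_1$. The safe route is to run the Lin--Tolman reduction directly in terms of $(J_\pm,g,b)$ on the regular orbit and then propagate to the singular strata by the density/continuity arguments already used in \S~\ref{Com} and \S~\ref{CON}; an alternative is to use a Thm.~\ref{Symm}-type decomposition of $\mathcal{F}_1$ (a $B$-transform together with a holomorphic-Poisson datum) both of which behave well under reduction. Either way, the technical core is the coordinate matching between $(\theta,\nu)$ on $\mathbb{C}^d$ and $(\theta,\mu)$ on $M_\Delta$ provided by the exact sequence (\ref{seq}), and the bookkeeping that $(C',F')$ reduces to $(C,F)$ and that condition (\ref{posi1}) on $\mathbb{C}^d$ restricts to condition (\ref{posi1}) on $M_\Delta$ along $\nu_N^{-1}(0)$.
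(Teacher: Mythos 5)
Your surrounding dictionary is essentially correct and can be made rigorous: since $d\nu_j(X_i)=0$, the $2$-form $\widehat{F'}=\frac12\sum F'_{kj}d\nu_j\wedge d\nu_k$ pulls back on $\mathcal{Z}=\nu_N^{-1}(0)$ to $q^*\widehat{F}$ precisely because $\varsigma_\wedge(F')=F$ (in matrix terms $U^TF'U=F$ with $d\nu=U\,q^*d\mu$ on $\mathcal{Z}$), the purely tangent map $\mathcal{F}_1'$ fixes every fundamental vector field and hence descends along $q_*$ to $\mathcal{F}_1$, and the anti-diagonal structure $I_+'$ attached to $C'$ reduces to the one attached to $C$ by the $F=0$ case of \cite{Wang2}. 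The gap is exactly where you flag it, and it is not a small one: the assertion that ``the reduction merely erases the $\mathfrak{n}$-directions,'' so that (\ref{tran}) plus the dictionary ``shows'' the reduced $J_\pm'$ equal the $(\tau,C,F)$-structure of Thm.~\ref{cons}, begs the question. In the Lin--Tolman/metric-reduction picture the reduced $J_\pm$ are defined by identifying $T(\mathcal{Z}/N)$ with the $g'\pm b'$-horizontal distributions $\mathcal{D}_\pm=\{Y:\,g'(Y,X_i)\pm b'(Y,X_i)=0\}$, which are \emph{not} coordinate distributions, differ for $J_+$ and $J_-$, and are twisted by $F'$; they do not coincide with the distribution $\mathcal{D}_0$ used to reduce the K\"ahler structure. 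To make your conjugation argument work you must prove that $\mathcal{F}_1'$ carries the $I$-horizontal distribution onto $\mathcal{D}_+$ (and its counterpart onto $\mathcal{D}_-$) and intertwines the projections $q_*$ on both sides; this is true (e.g.\ $\mathcal{F}_1'(\partial_{\nu_i})=\partial_{\nu_i}-\frac12\sum_k F'_{ki}X_k$, which is exactly a generator of $\mathcal{D}_+$), but it is precisely the computational content of the statement, and your proposal defers it rather than supplying it.

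For comparison, the paper does not descend $\mathcal{F}_1'$ at all: it runs the metric reduction of \cite{Ca}\cite{wang0} directly, writes down the three horizontal distributions $\mathcal{D}_\pm$, $\mathcal{D}_0$, computes on $\mathcal{K}_{F'}\cap(\mathbb{C}^*)^d$ in admissible coordinates that $J_+'\bigl(\partial_{\nu_i}-\frac12\sum_j F_{ji}X_j\bigr)=\sum_k(\phi')_{ki}X_k$, reads off that the reduced admissible connection is $d\theta_i^+=d\theta_i-\frac12\sum_j F_{ji}d\nu_j$ with $J_+^*d\theta_i^+=\sum_j(\phi')_{ij}d\nu_j$, and then invokes the characterization of Thm.~\ref{GKG} to conclude that the quotient structure is the one attached to Guillemin's potential and $(C,F)$. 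Your ``safe route'' fallback is in effect this very argument, so either complete the distribution computation sketched above to justify the conjugation step, or carry out the direct Lin--Tolman computation; as written, the central identification is asserted, not proved.
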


\begin{example}Before we can prove the theorem, let us demonstrate that generally we cannot expect that $\mathcal{Z}/N$ is the whole of $M_\Delta$. As in Example~\ref{ex2}, take $d=2$. Let the Delzant polytope be simply $[0, 1/2]$, which is described by the inequalities $\mu_1\geq 0$ and $-\mu_2\geq -1/2$. Then $\lambda_1=0$, $\lambda_2=-1/2$ and $N=S^1$ is the diagonal of $\mathbb{T}^2$. The moment map $\nu_N$ is then
\[\nu_N=|z_1|^2/2+|z_2|^2/2-1/2,\]
and in this case $M_\Delta$ is the quotient of $S^3$ by the diagonal $S^1$-action. However, if $F'=\left(
                                                                                                    \begin{array}{cc}
                                                                                                      0 & f' \\
                                                                                                      -f' & 0 \\
                                                                                                    \end{array}
                                                                                                  \right)
\neq 0$, we know from Example~\ref{ex2} that points in the zero-level set of $\nu_N$ in $\mathcal{K}_{F'}$ should satisfy
\[|z_1|^2+|z_2|^2=1,\quad |z_1z_2|<\frac{ 2}{|f'|}.\]
Then $\nu_N^{-1}(0)$ is not the whole of the 3-sphere unless $f'$ is sufficiently small. Consequently, the quotient is generally only an open subset of $S^2$.
\end{example}
\begin{proof}One should notice first that the present situation does fit in well with the general formalism developed in \cite{LT}. So we do have a GK quotient by the (extended) action of $N$. Additionally, the story is rather classical on the symplectic side--it is in essence the symplectic reduction and the quotient GK structure is consequently of symplectic type. There is of course a residual $\mathbb{T}^n$-action on the quotient, which preserves the quotient GK structure. The point here is to see the quotient toric GK structure is really characterized by the canonical toric K$\ddot{a}$hler structure on $M_\Delta$ and the pair $(C, F)$.

The details of the proof is an application of metric reduction developed in \cite{Ca} \cite{wang0}, which is mainly an account of GK reduction in terms of more traditional notions like Riemmanian metrics and complex structures. Let us review this briefly in our present setting.

Note that here the moment map $\nu_N$ of the $N$-action is the restriction of $\nu$ on $\mathfrak{n}$. It is well-known that $0$ is a regular value of $\nu_N$ (for example see \cite{Gul}). Let $\mathcal{Z}:=\nu^{-1}_N(0)$. Then $N$ acts freely on $\mathcal{Z}$ and $\mathcal{Z}/N$ is an open subset of $M_\Delta$. We can choose another basis $\{f_i\}$ of $\mathfrak{t}^d$ such that $\{f_1,\cdots, f_{d-n}\}$ is a basis of $\mathfrak{n}$\footnote{The choice of course has also changed the admissible coordinates on $\mathcal{K}_{F'}$ and many other things depending on them, but by abuse of notation we won't bother to introduce new ones.}. Denote the fundamental vector filed associated to $f_i$ by $X_i$. Then correspondingly the components $\nu_i$, $i=1,\cdots, d-n$ are actually the components of $\nu_N$. The extended action of $\mathfrak{n}$ on $\mathcal{K}_{F'}$ is generated by
\[X_i-b'(X_i),\quad i=1,2,\cdots, d-n,\]
where $b'=-1/2\Omega'(J_+'-J_-')$ and $J_\pm'$ are the underlying complex structures of the GK structure on $\mathcal{K}_{F'}$. Note that $q: \mathcal{Z}\rightarrow \mathcal{Z}/N$ is a principal $\mathbb{T}^{d-n}$-bundle. Then there are \emph{three} horizontal distributions determined by
\[\mathcal{D}_\pm=\{Y\in T\mathcal{Z}|g'(Y, X_i)\pm b'(Y,X_i)=0,i=1,2,\cdots, d-n\},\]
and by
\[\mathcal{D}_0=\{Y\in T\mathcal{Z}|\Omega'(Y, I_0'X_i)=0,i=1,2,\cdots, d-n\}\]
where $g'=-1/2\Omega'(J_+'+J_-')$ is the biHermitian metric on $\mathcal{K}_{F'}$ and $I_0'$ is the canonical complex structure on $\mathcal{K}_{F'}$. Then $\mathcal{D}_\pm$ are $J_\pm'$-invariant respectively and $\mathcal{D}_0$ is $I_0'$-invariant. Identifying $\mathcal{D}_\pm/N$ and $\mathcal{D}_0/N$ with $T(\mathcal{Z}/N)$ then produces the reduced complex structures $J_\pm$ and $I_0$ on $\mathcal{Z}/N$.

Now in our present setting, it's evident that we only need to prove the theorem for the open dense subset $\mathcal{K}_{F'}\cap (\mathbb{C}^*)^d$. In the following computation we mainly use the admissible coordinates $\theta_i, \nu_i$ associated to $I_0'$.

Let us find what $J_+$ is. $Y\in \mathcal{D}_+$ should satisfy
\[d\nu_i(Y)=0,\quad i=1,\cdots, d-n\]
and
\[g'(Y, X_i)+b'(Y,X_i)=-\Omega'(J_+'Y, X_i)=-d\nu_i(J_+'Y)=0, \quad \quad i=1,\cdots, d-n.\]
It is not hard to find that $\mathcal{D}_+$ is generated by
\[\partial_{\nu_i}-\frac{1}{2}\sum_{j=1}^dF_{ji}X_j,\quad J_+'(\partial_{\nu_i}-\frac{1}{2}\sum_{j=1}^dF_{ji}X_j),\quad i=d-n+1,\cdots, d.\]
Actually note that from the matrix forms listed at the end of \S~\ref{CON} we can obtain
\begin{eqnarray*}
J_+'(\partial_{\nu_i}-\frac{1}{2}\sum_{j=1}^dF_{ji}X_j)&=&\sum_{k=1}^d[\phi'+\frac{1}{4}F'(\phi')^{-1}F']_{ki}X_k-\sum_{k=1}^d(F'(\phi')^{-1}/2)_{ki}\partial_{\nu_k}\\
&+&\frac{1}{2}\sum_{j,k=1}^dF_{ji}[(\phi')^{-1})^{kj}\partial_{\nu_k}-((\phi')^{-1}F/2)_{kj}X_k]\\
&=&\sum_{k=1}^d(\phi')_{ki}X_k,
\end{eqnarray*}
where $\phi'=\phi_0+C$. With the projection $q_*$, the above computation means
\begin{equation}J_+(\partial_{\nu_i}-\frac{1}{2}\sum_{j=d-n+1}^dF_{ji}q_*(X_j))=\sum_{k=d-n+1}^d(\phi')_{ki}q_*(X_k),\quad i=d-n+1,\cdots, d.\label{j+}\end{equation}
A similar result holds for $J_-$.

The expression of $I_0$ is even easier: $Y\in \mathcal{D}_0$ should satisfy
\[d\nu_i(Y)=0,\quad d\nu_i(I_0'Y)=0,\quad i=1,\cdots, d-n,\]
and we can find $\mathcal{D}_0$ is generated by $\partial_{\nu_i}, I_0'\partial_{\nu_i}$, $i=d-n+1,\cdots, d$. With the projection $q_*$, it is obtained that
\begin{equation}I_0(\partial_{\nu_i})=\sum_{j=d-n+1}^d(\phi'_s)_{ji}q_*(X_j), \quad i=d-n+1, \cdots, d.\label{j0}\end{equation}

Taking the dual form of Eq.~(\ref{j+}) and Eq.~(\ref{j0}), we finally find that
\[d\theta_i^+=d\theta_i-\frac{1}{2}\sum_{j=d-n+1}^dF_{ji}d\nu_j, \quad i=d-n+1,\cdots, d\] gives rise to the admissible connection associated to $J_+$, and
\[J_+^*d\theta_i^+=\sum_{j=d-n+1}^d(\phi')_{ij}d\nu_j,\quad i=d-n+1,\cdots, d.\]
Obviously, a similar result holds for $J_-$. Compared with the proof of Thm.~\ref{GKG}, these precisely imply that the canonical K$\ddot{a}$hler structure on $M_\Delta$ and the matrices $C=\varsigma_\wedge(C')$, $F=\varsigma_\wedge(F')$ parameterize the toric GK structure on $\mathcal{Z}/N$ as expected. This completes the proof.
\end{proof}
\section{Appendix}
We collect some facts concerning matrices here. These are elementary but frequently (maybe implicitly) used in the main text of this article. For a matrix $A$, let $A^T$ be its transpose and $A_s$, $A_a$ its symmetric and anti-symmetric parts respectively. In the following facts except the last one, let $A$ be an $n\times n$ invertible matrix and $B$ its inverse.

\textbf{Fact I}. \[A_sB_s+A_aB_a=B_sA_s+B_aA_a=\textup{I},\]
\[A_sB_a+A_aB_s=B_sA_a+B_aA_s=0.\]

\textbf{Fact II.} $A_s$ is invertible if and only if $B_s$ is invertible; in particular, if $A_s$ is positive-definite, then so is $B_s$.

\textbf{Fact III}. $AB_sA^T=A_s$ and $AB_aA^T=-A_a$. In particular, we have \[B^T(B_s)^{-1}B=(A_s)^{-1}.\]

\textbf{Fact IV}. If $A, B$ are both $n\times n$ positive-definite symmetric matrices, and $A\geq B$, then $B^{-1}\geq A^{-1}$.
\begin{proof}
Let $0\neq v\in \mathbb{R}^n$. Then $v^TAv\geq v^TBv$.
Let $w=\sqrt{A}v$. Then
\[w^Tw\geq w^T A^{-1/2}BA^{-1/2}w.\]
Since $A$ is invertible, $w$ can be an arbitrary vector in $\mathbb{R}^n$. Then the above inequality implies that all eigenvalues of $A^{-1/2}BA^{-1/2}$ are $\leq 1$ and consequently all eigenvalues of $A^{1/2}B^{-1}A^{1/2}$ are $\geq 1$. Therefore,
\[v^T v\leq v^TA^{1/2}B^{-1}A^{1/2}v,\]
or equivalently $w^TA^{-1}w\leq w^TB^{-1}w$. That's to say $B^{-1}\geq A^{-1}$.
\end{proof}
\textbf{Fact V}. The map $A\mapsto \sqrt{A}$ on the space of $n\times n$ nonnegative-definite symmetric matrices is continuous. This is only a simple conclusion of the more general consideration in \cite{Chen}.
\section*{Acknowledgemencts}
This study is supported by the Natural Science Foundation of Jiangsu Province (BK20150797). The manuscript is prepared during the author's stay in the Department of Mathematics at the University of Toronto and this stay is funded by the China Scholarship Council (201806715027). The author also thanks Professor Marco Gualtieri for his invitation and hospitality.

\end{document}